\newtheorem{thm}{Theorem}[section]
\newtheorem{cor}[thm]{Corollary}
\newtheorem{prop}[thm]{Proposition}
\newtheorem{lem}[thm]{Lemma}
\newtheorem*{thm*}{Theorem}
\newtheorem*{prop*}{Proposition}
\newtheorem*{lem*}{Lemma}
\newtheorem*{cor*}{Corollary}
\newtheorem{lem'}{Lemma}
\newtheorem{thm'}{Theorem}
\newtheorem{prop'}{Proposition}
\theoremstyle{definition}
\newtheorem{exmp}[thm]{Example}
\newtheorem*{exmp*}{Example}
\newtheorem*{exer*}{Exercise}
\theoremstyle{remark}
\newtheorem{rem}[thm]{Remark}
\newtheorem*{claim*}{Claim}
\newtheorem*{rem*}{Remark}
\newtheorem{rem'}{Remark}
\newtheorem{qtn}[thm]{Question}
\newtheorem*{qtn'}{Question}
\newtheorem*{soln'}{Solution}
\DeclareMathOperator{\supp}{supp}
\DeclareMathOperator{\im}{Im}
\DeclareMathOperator{\Div}{div}
\DeclareMathOperator{\Ker}{Ker}
\DeclareMathOperator{\Coker}{Coker}
\DeclareMathOperator{\Dirac}{Dirac}
\DeclareMathOperator{\esssupp}{ess\,supp}
\renewcommand{\d}{\,\textnormal{d}}
\newcommand{\RomanNumeralCaps}[1]{\MakeUppercase{\romannumeral #1}}
\numberwithin{equation}{section}
\title{Density Criteria for Fourier Uniqueness Phenomena in $\mathbf{R}^d$}
\author{Anshul Adve}
\address{Department of Mathematics, Princeton University, Princeton, NJ 08540, USA}
\email{aadve@princeton.edu}
\begin{document}
	
	\begin{abstract}
		We show that if a closed discrete subset $A \subseteq \mathbf{R}^d$ is denser than a certain critical threshold, then $A$ is a Fourier uniqueness set, while if $A$ is sparser, then uniqueness fails and one can prescribe arbitrary values for a Schwartz function and its Fourier transform on $A$ (assuming those values are rapidly decreasing). More general results of the same nature hold for Fourier uniqueness pairs. This is an analog in all dimensions of the work \cite{KNS} of Kulikov, Nazarov, and Sodin in dimension $1$. Our methods are unrelated. As an application of our results, we produce Fourier uniqueness sets in higher dimensions which are optimally well-separated (up to constants).
		Our techniques also give the first purely analytic construction of discrete Fourier uniqueness pairs in higher dimensions. For a concrete example, consider
		\begin{align*}
			A = \{\delta |n|^{t-1} n : n \in \mathbf{Z}^d\}
			\qquad \text{and} \qquad
			B = \{\delta |n|^{u-1} n : n \in \mathbf{Z}^d\},
		\end{align*}
		where $t,u,\delta > 0$ and $t+u = 1$. We show that when $\delta$ is sufficiently small, $(A,B)$ is a Fourier uniqueness pair, but when $\delta$ is sufficiently large, there is an infinite-dimensional space of Schwartz functions $f$ with $f|_A = \hat{f}|_B = 0$.
	\end{abstract}
	
	\maketitle
	
	\tableofcontents
	
	\parskip 0.5em
	
	
	\section{Introduction}
	
	\subsection{Motivating question} In 2017, Radchenko and Viazovska \cite{Rad_Viaz} proved a remarkable \emph{Fourier interpolation formula}, which expresses any even Schwartz function $f$ on $\mathbf{R}$ in terms of the restrictions $f|_{\{\sqrt{n}\}}$ and $\hat{f}|_{\{\sqrt{n}\}}$ of $f$ and its Fourier transform to the discrete set $\{\sqrt{n} : n \in \mathbf{Z}^{\geq 0}\}$. They showed in addition that $f$ and $\hat{f}$ can take any (rapidly decreasing) values on $\{\sqrt{n}\}$, so long as the Poisson summation formula is satisfied.
	This was a prototype for analogous Fourier interpolation formulas for radial Schwartz functions in dimensions $8$ and $24$. These formulas were the key ingredient in their proof of universal optimality of the $E_8$ and Leech lattices, together with Cohn, Kumar, and Miller \cite{CKMRV}. All of these interpolation formulas involve modular forms, and are highly dependent on the arithmetic structure of $\{\sqrt{n}\}$.
	
	A corollary in \cite{Rad_Viaz} is that if $f \in \mathcal{S}_{\text{even}}(\mathbf{R})$ satisfies $f|_{\{\sqrt{n}\}} = \hat{f}|_{\{\sqrt{n}\}} = 0$, then $f = 0$. One says that $\{\sqrt{n}\}$ is a \emph{Fourier uniqueness set for even Schwartz functions} on $\mathbf{R}$. More generally, a pair $(A,B)$ of subsets $A,B \subseteq \mathbf{R}^d$ is a \emph{Fourier uniqueness pair} if $f|_A = 0$ and $\hat{f}|_B = 0$ implies $f = 0$ for all Schwartz functions $f \in \mathcal{S}(\mathbf{R}^d)$. A subset $A \subseteq \mathbf{R}^d$ is a \emph{Fourier uniqueness set} (for all Schwartz functions) if $(A,A)$ is a Fourier uniqueness pair. Fourier uniqueness seems to be an analytic property rather than an arithmetic property, so it is natural to expect that the density of $A,B$, rather than the structure of $A,B$, is the primary factor determining whether or not $(A,B)$ is a Fourier uniqueness pair. Our results make this precise.
	
	Given closed discrete subsets $A,B \subseteq \mathbf{R}^d$, let $\mathcal{F}_{A,B} \colon \mathcal{S}(\mathbf{R}^d) \to \mathcal{S}(A) \oplus \mathcal{S}(B)$ denote the map $f \mapsto (f|_A, \hat{f}|_B)$ from Schwartz functions on $\mathbf{R}^d$ to rapidly decreasing sequences on $A,B$. Then $\mathcal{F}_{A,B}$ being injective means that $(A,B)$ is a Fourier uniqueness pair, and $\mathcal{F}_{A,B}$ being surjective means that one can prescribe arbitrary values for $f$ on $A$ and $\hat{f}$ on $B$. This paper addresses the following question.
	
	\begin{qtn} \label{qtn:main}
		How do the densities of $A,B$ influence the injectivity / surjectivity properties of $\mathcal{F}_{A,B}$?
	\end{qtn}
	
	In the case $A = B$, our main results may be summarized in two bullet points:
	\begin{itemize}
		\item If $A$ is denser than $\{|n|^{-1/2}n : n \in \mathbf{Z}^d\}$, then $\mathcal{F}_{A,A}$ is injective but not surjective.
		
		\item If $A$ is sparser than $\{|n|^{-1/2}n : n \in \mathbf{Z}^d\}$, then $\mathcal{F}_{A,A}$ is surjective but not injective.
	\end{itemize}
	We have analogous results in general when $A$ and $B$ are not necessarily the same.
	When $d=1$, the set $\{|n|^{-1/2}n\}$ is just $\{\pm \sqrt{n}\}$, which is consistent with \cite{Rad_Viaz}.
	
	Most of this is known in dimension $1$ by unpublished work of Kulikov, Nazarov, and Sodin \cite{KNS}. Their method for proving Fourier uniqueness relies on the one-dimensional phenomenon that a closed discrete subset of $\mathbf{R}$ partitions $\mathbf{R}$ into intervals. We develop new techniques that work in all dimensions.
	
	
	\subsection{Main results} \label{subsec:main_results}
	
	For $p \geq 0$ and $\delta > 0$, a subset $S \subseteq \mathbf{R}^d$ is \emph{$(p,\delta)$-dense} (resp. \emph{$(p,\delta)$-separated}) if the balls $B_{\delta \langle x \rangle^{-p}}(x)$ centered at $x \in S$ cover $\mathbf{R}^d$ (resp. are disjoint). Here $\langle x \rangle$ is the Japanese bracket
	\begin{align} \label{eqn:Japanese_bracket}
		\langle x \rangle = \sqrt{1+|x|^2} \sim \max\{1,|x|\}
	\end{align}
	(see Section \ref{subsubsec:estimates_notn} for the definition of $\sim$ and similar notation).
	
	\begin{exmp} \label{exmp:(p,delta)}
		The set
		\begin{align*}
			\{|n|^{-\frac{p}{p+1}} n : n \in \mathbf{Z}^d\}
		\end{align*}
		in $\mathbf{R}^d$ is $(p,\delta)$-dense for $\delta$ sufficiently large and $(p,\delta)$-separated for $\delta$ sufficiently small. When $p = 1$ and $d = 1$, this set is $\{\pm \sqrt{n}\}$.
	\end{exmp}
	
	
	\begin{thm}[Subcritical case]\label{thm:sub_non-end}
		Let $p,q,\delta > 0$ with $pq < 1$. Let $A,B \subseteq \mathbf{R}^d$ be $(p,\delta)$-separated and $(q,\delta)$-separated, respectively. Then $\Coker \mathcal{F}_{A,B}$ is finite-dimensional with dimension $O_{p,q,d,\delta}(1)$, and $\Ker \mathcal{F}_{A,B}$ is infinite-dimensional. If in addition $\delta$ is sufficiently large depending on $p,q,d$, then $\mathcal{F}_{A,B}$ is surjective.
	\end{thm}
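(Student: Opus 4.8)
The plan is to construct a continuous ``synthesis'' operator $S\colon\mathcal S(A)\oplus\mathcal S(B)\to\mathcal S(\mathbf R^d)$ for which $\mathcal F_{A,B}\circ S$ is the identity plus a controllable error, and then read off the conclusions from Fredholm theory. Fix a nonnegative $\chi\in\mathcal S(\mathbf R^d)$ with $\chi(0)=1$ supported in the unit ball. For $x\in A$ put $u_x(y)=\chi\bigl((y-x)/r_x\bigr)$, with $r_x\sim_d\delta\langle x\rangle^{-p}$ chosen small enough that $u_x$ vanishes on $A\setminus\{x\}$; by $(p,\delta)$-separation the supports are pairwise disjoint. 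Dually, for $\xi\in B$ let $v_\xi$ have $\widehat{v_\xi}(\zeta)=\chi\bigl((\zeta-\xi)/\rho_\xi\bigr)$ with $\rho_\xi\sim_d\delta\langle\xi\rangle^{-q}$ small enough that $\widehat{v_\xi}$ vanishes on $B\setminus\{\xi\}$. Using the disjoint supports of the $u_x$ and the rapid decay of the data, one checks that $S(a,b)=\sum_{x\in A}a_x u_x+\sum_{\xi\in B}b_\xi v_\xi$ converges in $\mathcal S(\mathbf R^d)$ and is continuous in $(a,b)$. Since $S(a,b)|_A=a$ and $\widehat{S(a,b)}|_B=b$ exactly, we get
\[
\mathcal F_{A,B}\circ S=\mathrm{Id}+E,\qquad E(a,b)=(E_2 b,\,E_1 a),
\]
where $(E_1 a)_\xi=\sum_{x\in A}a_x\widehat{u_x}(\xi)$ for $\xi\in B$ and $(E_2 b)_x=\sum_{\xi\in B}b_\xi v_\xi(x)$ for $x\in A$. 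Surjectivity of $\mathrm{Id}+E$ gives surjectivity of $\mathcal F_{A,B}$, and $\dim\Coker\mathcal F_{A,B}\le\dim\Coker(\mathrm{Id}+E)$ always, so everything reduces to understanding $\mathrm{Id}+E$.

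The crux is a pair of Fourier-decay estimates for $E_1$ and $E_2$. Writing $\widehat{u_x}(\xi)=r_x^d e^{-2\pi i x\cdot\xi}\widehat\chi(r_x\xi)$, splitting the sum for $(E_1 a)_\xi$ according to whether $r_x|\xi|\lessgtr 1$, and using that $A$ has $O\bigl((2^{j(1+p)}/\delta)^d\bigr)$ points of magnitude $\sim 2^j$, one gets for each large $s$, with $\|a\|_{\mathcal S(A),s}:=\sup_{x\in A}\langle x\rangle^s|a_x|$,
\[
|(E_1 a)_\xi|\ \lesssim_{s,d}\ \delta^{-(s-d)/p}\,\langle\xi\rangle^{-(s-d)/p}\,\|a\|_{\mathcal S(A),s}
\]
up to a rapidly decaying remainder, and symmetrically $|(E_2 b)_x|\lesssim_{s,d}\delta^{-(s-d)/q}\langle x\rangle^{-(s-d)/q}\|b\|_{\mathcal S(B),s}$. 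Thus $E_1$ maps weight $s$ on $A$ into weight $\tfrac{s-d}{p}$ on $B$, and $E_2$ maps weight $t$ on $B$ into weight $\tfrac{t-d}{q}$ on $A$. I would therefore work not in $\mathcal S(A)\oplus\mathcal S(B)$ directly but in the scale of anisotropic Banach spaces $X_s=\bigl\{(a,b):\|a\|_{\mathcal S(A),s}+\|b\|_{\mathcal S(B),\sigma(s)}<\infty\bigr\}$ with $\sigma(s)=(1-\eta)\tfrac{s-d}{p}$ for a small fixed $\eta>0$. Because $pq<1$ one may take $\eta<1-pq$; then for $s$ large $E\colon X_s\to X_s$ is bounded, the $A\to B\to A$ round trip strictly increases the weight (by the factor $\tfrac{1-\eta}{pq}>1$), and $E^2$ is compact on $X_s$. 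Quantitatively, if $E^{\le R}$ denotes the part of $E$ involving only points of magnitude $\le R$ — a finite-rank operator of rank $\lesssim\#A_{\le R}\cdot\#B_{\le R}$ — and $E^{>R}=E-E^{\le R}$, the same estimates give $\|E^{>R}\|_{X_s\to X_s}<\tfrac12$ once $R\ge R_0(p,q,d,\delta)$. Hence $\mathrm{Id}+E=(\mathrm{Id}+E^{>R_0})+E^{\le R_0}$ is a sum of an invertible operator and a finite-rank one on $X_s$, so Fredholm of index $0$ with $\dim\Ker=O_{p,q,d,\delta}(1)$; and when $\delta$ is large the estimate gives $\|E\|_{X_s\to X_s}<1$, so $\mathrm{Id}+E$ is invertible on every $X_s$. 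To pass back to $\mathcal S(A)\oplus\mathcal S(B)=\bigcap_s X_s$ one runs an elliptic-regularity bootstrap: if $(\mathrm{Id}+E)h=(a,b)$ with $(a,b)$ rapidly decreasing and $h\in X_{s_*}$, then $h=(a,b)-Eh$, and successive applications of $E$ improve the $A$-weight and then transfer the gain to the $B$-weight, so $h\in X_{s_k}$ with $s_k\to\infty$, i.e.\ $h$ is rapidly decreasing; likewise $\Ker(\mathrm{Id}+E|_{X_{s_*}})\subseteq\mathcal S(A)\oplus\mathcal S(B)$. This yields surjectivity of $\mathcal F_{A,B}$ for $\delta$ large, and $\dim\Coker\mathcal F_{A,B}\le\dim\Ker(\mathrm{Id}+E|_{X_{s_*}})=O_{p,q,d,\delta}(1)$ in general.

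For the infinite-dimensionality of $\Ker\mathcal F_{A,B}$ I would reuse the cokernel bound. A short geometric argument (looking between the disjoint balls $B_{\delta\langle x\rangle^{-p}/4}(x)$) produces, at every sufficiently large scale, ``deep hole'' points lying at distance $\gtrsim\delta\langle x\rangle^{-p}$ from $A$. Using these one builds a $(p,\delta/100)$-separated superset $A^{+}=A\sqcup\bigsqcup_{k\ge 1}Y_k$ in which each $Y_k$ is a cluster of $C+1$ such points placed at magnitude beyond the threshold $R_0(p,q,d,\delta/100)$, where $C=O_{p,q,d,\delta}(1)$ is the resulting (uniform) cokernel bound for $\mathcal F_{A^{+},B}$ from the previous step (note $B$ stays $(q,\delta/100)$-separated). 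Since $|Y_k|>C\ge\dim\Coker\mathcal F_{A^{+},B}$, for each $k$ some nonzero prescription of values on $Y_k$, together with $0$ on $A\sqcup\bigsqcup_{j\ne k}Y_j$ and $0$ on $B$, lies in $\operatorname{im}\mathcal F_{A^{+},B}$; the resulting function $f_k\in\mathcal S(\mathbf R^d)$ satisfies $f_k|_A=0$, $\widehat{f_k}|_B=0$, $f_k|_{Y_k}\ne 0$, and $f_k|_{Y_j}=0$ for $j\ne k$. Evaluating any finite relation $\sum_k d_k f_k=0$ on $Y_k$ forces $d_k=0$, so the $f_k$ are linearly independent in $\Ker\mathcal F_{A,B}$.

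The step I expect to be the main obstacle is the second one. The naive synthesis operator loses weight, so $E$ is bounded on no single Schwartz seminorm; the scheme works only because the anisotropic weights $\sigma(s)$ can be tuned — and this is exactly where $pq<1$ is used — to make $E$ bounded and $E^2$ compact on the spaces $X_s$, and because the large-magnitude part of $E$ decays quantitatively enough to keep the finite-rank correction of size $O_{p,q,d,\delta}(1)$. Establishing these Fourier-decay estimates for $E_1,E_2$ and carrying out the attendant bookkeeping is where essentially all the work lies.
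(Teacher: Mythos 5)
Your overall scheme is the same as the paper's: a synthesis/parametrix operator built from bump functions adapted to $B_{c\delta\langle x\rangle^{-p}}(x)$ and dual wave packets on the $B$ side, an off-diagonal error $E$ whose $A\to B\to A$ round trip gains because $pq<1$, a Neumann/Fredholm argument in weighted sequence spaces, and then soft arguments (finite-rank corrections, enlarging the sets by well-placed extra points) for the cokernel bound and the infinite-dimensional kernel. The kernel argument via clusters $Y_k$ of size exceeding the uniform cokernel bound is correct and is essentially the paper's argument in a more hands-on form.

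There is, however, a genuine gap in the quantitative heart of the argument: you never normalize $A,B$ away from the origin, and without that your key estimate and the contraction claim fail. With your choice $r_x\sim\delta\langle x\rangle^{-p}$, a point $x\in A$ with $|x|\lesssim 1$ produces a bump of radius $\sim\delta$, so $\widehat{u_x}$ has height $\sim r_x^d\sim\delta^d$ on $\{|\xi|\lesssim\delta^{-1}\}$; if $B$ also has a point there (e.g.\ $0\in A\cap B$, which is allowed by $(p,\delta)$- and $(q,\delta)$-separation and occurs for the model sets), then $(E_1 1_{x=0})_0=\widehat{u_0}(0)\sim\delta^d$, so $\|E\|_{X_s\to X_s}\gtrsim\delta^d\gg1$. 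Indeed your displayed bound really comes out as $(\delta\langle\xi\rangle)^{-(s-d)/p}\|a\|_s$ from the dyadic count (the $\delta$'s in the point count and in $r_x^d$ cancel), which is small only when $\delta|\xi|\gg1$; for $|\xi|\lesssim\delta^{-1}$ there is no smallness at all, and no choice of bump width fixes this, since one cannot have $u(0)=1$ and $\hat u(0)$ small simultaneously. Consequently ``$\|E\|<1$ for $\delta$ large,'' hence surjectivity, does not follow as written. The missing ingredient is the paper's normalization: $\mathcal{F}_{A,B}$ is conjugate to $\mathcal{F}_{A+v,B+w}$ under translation/modulation, so one may first translate so that all points of $A,B$ have magnitude $\gtrsim\delta^{\varpi}$; only then do the estimates produce a uniform $o_{\delta\to\infty}(1)$ factor. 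Your Fredholm conclusion (finite, $O_{p,q,d,\delta}(1)$ cokernel) survives because the offending interactions sit inside the finite-rank piece $E^{\le R_0}$, though when transferring the bound from $X_{s_\ast}$ back to $\mathcal{S}(A)\oplus\mathcal{S}(B)$ you should take the finite-dimensional complement inside rapidly decreasing sequences (e.g.\ the range of $E^{\le R_0}$, spanned by $\widehat{u_x}|_B$ and $v_\xi|_A$ for the low points) rather than an arbitrary complement in $X_{s_\ast}$; with the translation step added and that bookkeeping done, your route goes through.
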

	
	\begin{thm}[Supercritical case]\label{thm:super_non-end}
		Let $p,q,\delta > 0$ with $pq > 1$. Let $A,B \subseteq \mathbf{R}^d$ be $(p,\delta)$-dense and $(q,\delta)$-dense, respectively. Then $\Ker \mathcal{F}_{A,B}$ is finite-dimensional with dimension $O_{p,q,d,\delta}(1)$, and $\Coker \mathcal{F}_{A,B}$ is infinite-dimensional. If in addition $\delta$ is sufficiently small depending on $p,q,d$, then $\mathcal{F}_{A,B}$ is injective.
	\end{thm}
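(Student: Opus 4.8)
The plan is to deduce all three conclusions from a single quantitative a priori estimate for elements of the kernel, after which finite‑dimensionality, injectivity for small $\delta$, and the infinite‑dimensional cokernel follow by comparatively soft arguments. The estimate I would aim for has the shape
\[
\|f\|_{L^2(\mathbf R^d)}\le C_{p,q,d,\delta}\,\|Tf\|,\qquad f\in\Ker\mathcal F_{A,B},
\]
where $T$ is an explicit compact (indeed Hilbert--Schmidt) operator built from spatial truncation to a ball $B_{R_0}(0)$ followed by frequency truncation to $B_{\rho_0}(0)$, together with its Fourier conjugate, and where the radii satisfy $R_0\sim\delta^{(q+1)/(pq-1)}$, $\rho_0\sim\delta^{(p+1)/(pq-1)}$; since $pq>1$ these exponents are positive, so $\|T\|_{\mathrm{HS}}=O_{p,q,d}\big((R_0\rho_0)^{d/2}\big)\to 0$ as $\delta\to 0$.

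To prove this estimate I would run a \emph{phase‑space localization}, scale by scale over dyadic annuli. The hypothesis $f|_A=0$, together with $(p,\delta)$‑density of $A$, forces the low‑frequency part of $f$ to be controlled, on the region $|x|\gtrsim(\delta\rho)^{1/p}$ where $A$ is a net finer than the wavelength $\rho^{-1}$, by its values on $A$ — which agree with those of the high‑frequency part — and hence by $\|\widehat f\|_{L^2(|\xi|>\rho)}$, up to a contribution from a bounded region; this is a Bernstein / Plancherel--Pólya sampling inequality on nets. Dually, $\widehat f|_B=0$ with $(q,\delta)$‑density of $B$ controls $\widehat{f\,\mathbf 1_{B_R(0)}}$ on $|\xi|\gtrsim(\delta R)^{1/q}$ by $\|f\|_{L^2(|x|>R)}$, again modulo a bounded region. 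Chaining the two with $R\sim(\delta\rho)^{1/p}$ and $\rho\sim(\delta R)^{1/q}$ yields a recursion $\rho\mapsto C\,\delta^{\frac1q+\frac1{pq}}\rho^{\frac1{pq}}$ on the frequency scale; because $pq>1$ the exponent $\tfrac1{pq}$ is $<1$, so this recursion contracts to the fixed scale $\rho_0\sim\delta^{(p+1)/(pq-1)}$ after finitely many steps, after which only the bounded phase‑space box $\{|x|\lesssim R_0,\ |\xi|\lesssim\rho_0\}$ is left uncontrolled. I expect this step to be the main obstacle. The delicate points are: (a) the nets $A$ and $B$ are strongly non‑uniform across scales, so the sampling inequalities must be localized to dyadic blocks with honest control of the leakage between blocks; (b) restricting a merely Schwartz (not bandlimited) function to a separated set costs derivatives, which I would absorb into weighted Sobolev norms adapted to $p,q$; and (c) one must verify that the space/frequency chaining converges — precisely where $pq>1$ enters — and that the leftover box has size $O_{p,q,d}(\delta^c)$ with constants uniform in $\delta$.

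Granting the estimate, Steps 2 and 3 are immediate. On $\Ker\mathcal F_{A,B}$ the identity factors through the compact operator $T$, so this space is finite‑dimensional, and estimating the singular values of $T$ gives $\dim\Ker\mathcal F_{A,B}=O_{p,q,d,\delta}\big((R_0\rho_0)^d+1\big)=O_{p,q,d,\delta}(1)$; a routine elliptic‑type bootstrap shows that any $L^2$ solution of $f|_A=\widehat f|_B=0$ is automatically Schwartz, so the $L^2$ and Schwartz kernels coincide. When $\delta$ lies below a threshold depending only on $p,q,d$, the right‑hand side of the estimate has norm strictly less than $\|f\|_{L^2}$ — both the self‑improvement error and $\|T\|$ are governed by the vanishing quantity $R_0\rho_0$ — which forces $f=0$, i.e.\ $\mathcal F_{A,B}$ is injective.

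For Step 4 (infinite‑dimensional cokernel) I would exploit the strict inequality $pq>1$ to fix $p_1$ with $1/q<p_1<p$ and split $A=A_1\sqcup A_2$, where $A_1\subseteq A$ is a subnet that is still $(p_1,C_d\delta)$‑dense — extracted greedily, since the balls $B_{\delta\langle x\rangle^{-p_1}}(x)$ dwarf the balls $B_{\delta\langle x\rangle^{-p}}(x)$ at infinity — and $A_2=A\setminus A_1$ is infinite (a dyadic count gives $|A_2\cap\{|x|\sim2^k\}|\gg|A_1\cap\{|x|\sim2^k\}|$). Since $p_1q>1$, the first two steps apply to the pair $(A_1,B)$, so $\Ker\mathcal F_{A_1,B}$ is finite‑dimensional; set $W=\{\,g|_{A_2}:g\in\Ker\mathcal F_{A_1,B}\,\}\subseteq\mathcal S(A_2)$, a finite‑dimensional subspace. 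A datum $(0,a_2,0)\in\mathcal S(A_1)\oplus\mathcal S(A_2)\oplus\mathcal S(B)$ lies in $\im\mathcal F_{A,B}$ exactly when $a_2\in W$ (if $f|_{A_1}=0$ and $\widehat f|_B=0$ then $f\in\Ker\mathcal F_{A_1,B}$), so $a_2\mapsto(0,a_2,0)+\im\mathcal F_{A,B}$ descends to an injection $\mathcal S(A_2)/W\hookrightarrow\Coker\mathcal F_{A,B}$, and the source is infinite‑dimensional since $\mathcal S(A_2)$ is. This uses nothing beyond the finite‑dimensional kernel statement already established, which in turn rests only on the a priori estimate.
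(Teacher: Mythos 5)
Your soft deductions are fine: the compactness argument (an orthonormal sequence in $\Ker\mathcal{F}_{A,B}$ would violate $\|f\|_{L^2}\le C\|Tf\|$ with $T$ compact), injectivity once $\|T\|$ is small, and the cokernel argument via a sub-pair $(A_1,B)$ with finite-dimensional kernel are all correct, and the last one is essentially the paper's own argument (the paper thins $A,B$ keeping the same $p,q$; you shrink $p$ to $p_1$ with $p_1q>1$ — either works). The critical box numerology $R_0\sim\delta^{(q+1)/(pq-1)}$, $\rho_0\sim\delta^{(p+1)/(pq-1)}$ also matches the correct heuristic. The problem is that everything rests on the a priori estimate in Step 1, and that estimate is asserted rather than proved; it is exactly where the entire difficulty of the theorem lives, and the mechanism you propose for it does not close as sketched. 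Each link of your chain --- a Plancherel--P\'olya/sampling inequality on a non-uniform net, plus leakage between dyadic blocks, plus the derivative cost of restricting the non-bandlimited piece $P_{>\rho}f$ to a net much finer than its wavelengths --- carries a constant strictly larger than $1$, and to control all of phase space the chain must traverse an unbounded number of scales (the recursion $\rho\mapsto C\delta^{1/q+1/(pq)}\rho^{1/(pq)}$ reaches $\rho_0$ from a starting scale $\rho$ only after $\sim\log\log\rho$ steps, and you must let $\rho\to\infty$; ``after finitely many steps'' is not correct as stated). So the compounded constants $C^{k}$ must be beaten by a per-step gain, and nothing in your sketch identifies one: the Schwartz decay of $f$ cannot be used quantitatively (its constants depend on $f$, which is what you are trying to bound), and items (a)--(c) that you flag as ``delicate'' are precisely the missing content, not refinements of an otherwise complete argument. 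Note also that for the finite-dimensionality claim you need the estimate for \emph{all} $\delta$, with constant $C_{p,q,d,\delta}$, which your scheme does not address separately; the paper avoids this by a soft reduction (add $O_\delta(1)$ points and slightly decrease $p,q$ to reduce to the small-$\delta$ injectivity statement), which you could and should borrow.

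For comparison, the paper's proof is dual rather than primal: it shows that the closed span $\mathcal{M}_{A,B}$ of tempered measures on $A$ and inverse Fourier transforms of tempered measures on $B$ is all of $\mathcal{S}'(\mathbf{R}^d)$, by building a parametrix for the inclusion. The two ingredients are (i) a quadrature lemma (``integration by sampling''): on a $(p,\delta)$-dense set one constructs signed measures giving $\int f=\int f\,\mathrm d\mu+O(\delta^k\cdot\text{weighted derivative terms})$, which supplies a genuine per-step gain $\delta^{k/2}\langle y\rangle^{-k\varepsilon}$ at sub-Nyquist frequencies; and (ii) the Gabor decomposition together with the weighted norms $\mathcal{S}^{-s}$, in which the uncontrolled high-frequency errors are pushed out to infinity in phase space and are therefore small, so that a Neumann series converges. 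Your primal approach is genuinely different and not unreasonable --- if completed it would give injectivity and the kernel bound directly, without duality --- but to complete it you need an analogue of exactly these two mechanisms: a quantitative source of smallness per scale (oversampling/quadrature gain) and a weighting or topology in which the multi-scale errors are summable. As it stands, the central estimate is a conjecture supported by correct numerology, so the proposal has a genuine gap.
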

	
	It is no loss in generality to use the same $\delta$ for both $A$ and $B$, because $\mathcal{F}_{A,B}$ is conjugate to $\mathcal{F}_{\lambda A, \lambda^{-1}B}$ for any $\lambda > 0$ by rescaling, so we can replace $A,B$ with $\lambda A, \lambda^{-1}B$.
	
	In the critical case $pq = 1$, weaker versions of Theorems \ref{thm:sub_non-end} and \ref{thm:super_non-end} still hold.
	
	\begin{thm}[Sparse critical case]\label{thm:sub_end}
		Let $p,q,\delta > 0$ with $pq = 1$ and $\delta$ sufficiently large depending on $p,q,d$. Let $A,B \subseteq \mathbf{R}^d$ be $(p,\delta)$-separated and $(q,\delta)$-separated, respectively. Then $\Ker \mathcal{F}_{A,B}$ is infinite-dimensional.
	\end{thm}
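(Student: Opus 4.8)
The plan is to adapt the construction of kernel elements from the proof of Theorem~\ref{thm:sub_non-end}, using the hypothesis that $\delta$ is large to recover the quantitative slack that, in the subcritical range, is supplied automatically by the strict inequality $pq<1$. It is worth noting at the outset that to conclude $\dim\Ker\mathcal{F}_{A,B}=\infty$ it is enough to produce, for each finite set $C\subseteq\mathbf{R}^d$, a single nonzero $f\in\mathcal{S}(\mathbf{R}^d)$ with $f|_{A\cup C}=0$ and $\hat f|_B=0$: a finite-dimensional subspace of $\mathcal{S}(\mathbf{R}^d)$ is separated by finitely many point evaluations, so a finite-dimensional kernel would be killed by an appropriate choice of $C$. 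Since $A\cup C$ differs from $A$ only inside a compact set, this is the same type of problem as before, up to finitely many extra scalar conditions that a construction producing many solutions can absorb.

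Next I would reopen the argument behind Theorem~\ref{thm:sub_non-end} and locate where $pq<1$ enters. It governs a phase-space bookkeeping: at spatial scale $|x|\sim R$ the gaps of $A$ have size $\sim\delta R^{-p}$, which pins the relevant dual frequency scale at $\sim R^{p}/\delta$, where the gaps of $B$ have size $\sim\delta^{1+q}R^{-pq}$; once the dust settles, the mismatch between what must be prescribed and what one is free to choose carries a power of $R$ whose exponent is proportional to $1-pq$, together with a positive power of $\delta$. In the subcritical case the $R$-power alone forces the scale-by-scale corrections to sum to something finite. At $pq=1$ the $R$-gain vanishes, but the $\delta$-gain does not --- it reflects the elementary fact that a $(p,\delta)$- or $(q,\delta)$-separated set with $\delta$ large is sparser than the critical configuration by a factor $\sim\delta^{-d}$ in its counting function --- so one re-runs the construction with decay in the scale replaced throughout by decay in $\delta$, producing a nonzero $f\in\mathcal{S}(\mathbf{R}^d)$ with $f|_A=0$ and $\hat f|_B=0$ whenever $\delta$ exceeds a threshold depending on $p,q,d$.

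The technical heart --- and essentially the only content --- is this last step, because at the critical exponent there is no per-scale decay and the contributions of the different dyadic scales must instead be tamed uniformly: one has to carry the estimates of Theorem~\ref{thm:sub_non-end} through while tracking the $\delta$-powers and checking that the correction operator built from the scale-by-scale adjustments has operator norm below $1$ once $\delta$ is large. This is also exactly why the remaining conclusions of Theorem~\ref{thm:sub_non-end} --- finite-dimensional cokernel and, for $\delta$ large, surjectivity --- are not claimed in the critical case: those rest on a gain that is a genuine negative power of the scale, and that gain is irretrievably lost at $pq=1$, a borderline-density separated set being able to fail to support prescribed Fourier data.
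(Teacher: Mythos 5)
There is a genuine gap, and it sits exactly where you locate ``essentially the only content'': the claim that one can re-run the estimates of Theorem~\ref{thm:sub_non-end} ``with decay in the scale replaced throughout by decay in $\delta$'' is not the mechanism that works at $pq=1$, and as stated it would fail. In the subcritical proof the contraction of $1-\mathcal{F}_{A,B}P$ comes from the exponent gap: one picks $\tilde p>p$, $\tilde q>q$ with $\tilde p\tilde q=1$, and the weight inequality \eqref{eqn:fast_growth} turns the strict inequality $(p+\varepsilon)\tilde q^{1/2}<\tilde p^{1/2}$ into a gain which is a \emph{negative power of the scale} $|a_0|$ (only afterwards converted into $\delta^{-1}$ via $|a_0|\gtrsim\delta^{\varpi}$). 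At $pq=1$ no such $\tilde p,\tilde q$ exist, and with polynomial weights the losses in the analogue of Proposition~\ref{prop:phi_psi_small} (the polynomially many $b\in B$ with $|b|\lesssim|a_0|^p$, and the weight evaluated at the top of that range) are positive powers of $|a_0|$, while the only gain your scheme provides is a fixed power of $\delta$, uniform in the scale; a $\delta$-gain cannot beat an unbounded $|a_0|^{O(1)}$ loss, so the operator norm of $1-\mathcal{F}_{A,B}P$ is not $<1$ no matter how large $\delta$ is. The paper's proof changes the framework rather than just the bookkeeping: it only shows the image contains the quasi-exponentially weighted space $\mathcal{T}(A,B)$ (weight $w(t)=e^{\log^3 t}$, which suffices since finitely supported data is enough), it shrinks the bumps to width $\delta^{1/2}|a_0|^{-p}$, takes $\widehat{\varphi_{a_0}},\psi_{b_0}$ \emph{compactly supported} (Schwartz tails would be fatal against such a weight) and Gevrey, so that $\varphi_{a_0}$ decays like $e^{-|x|^{0.9}}$; then the $\delta^{-1/2}$ contraction of the Fourier support is amplified by the super-polynomial growth \eqref{eqn:super_fast_growth} of $w$ into an arbitrary power of $|a_0|$, which together with $|a_0|\gtrsim\delta^{\varpi}$ and the $e^{-\delta^{0.45}}$ smallness at neighboring nodes yields \eqref{eqn:phi_phys_approx}--\eqref{eqn:psi_fourier_approx}. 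None of these ingredients (the new weight space, compact Fourier support, Gevrey decay, the $\delta^{1/2}$ rescaling) appear in your proposal, and they are the crux.

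A secondary issue is your reduction. Producing, for every finite $C$, a nonzero $f$ with $f|_{A\cup C}=0$ and $\hat f|_B=0$ does imply the conclusion, but it commits you to an $A\cup C$ that need not be $(p,\delta')$-separated for any useful $\delta'$: the separating points for a putative finite-dimensional kernel cannot be placed freely (kernel elements could concentrate near $A$), and the parametrix argument hinges on separation of the node set, so ``finitely many extra scalar conditions that a construction producing many solutions can absorb'' is unjustified --- having many solutions of the augmented problem is essentially what is to be proved. The paper sidesteps this by enlarging $A,B$ to supersets $A',B'$ obtained by adding an infinite \emph{well-separated} thin sequence of points (so the separation hypotheses persist with $\delta'\gtrsim\delta$), proving that $\im\mathcal{F}_{A',B'}$ contains all finitely supported sequences, and reading off an infinite-dimensional subspace of $\Ker\mathcal{F}_{A,B}$ from data supported on the added points, as in Section~\ref{subsec:completing_sub_non-end}.
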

	
	For the dense critical case, we need one more definition. We say that a diffeomorphism $\sigma \colon \mathbf{R}^d \to \mathbf{R}^d$ is \emph{$r$-pseudohomogeneous} for $r > 0$ if it satisfies the derivative bounds
	\begin{gather*}
		\langle \sigma(x) \rangle \sim \langle x \rangle^r, \\
		|\nabla^k \sigma(x)| \lesssim_k \langle x \rangle^{r-k}, \\
		|D\sigma(x)^{-1}| \sim \langle x \rangle^{1-r}
	\end{gather*}
	for all $k \in \mathbf{N}$ (here $|D\sigma(x)^{-1}|$ is the operator norm of the inverse of the Jacobian matrix of $\sigma$ at $x$). For example, any diffeomorphism which is homogeneous of order $r$ outside a bounded set is $r$-pseudohomogeneous (with the constants in the derivative bounds depending on $\sigma$, of course). Conversely, given a homogeneous diffeomorphism of $\mathbf{R}^d \setminus \{0\}$ of order $r > 0$ whose action on spheres is isotopic to an orthogonal linear transformation, it can be modified near the origin to extend to a pseudohomogeneous diffeomorphism of $\mathbf{R}^d$.
	
	If $\sigma$ is $\frac{1}{p+1}$-pseudohomogeneous for $p \geq 0$, then Taylor expansion shows that $\sigma(\mathbf{Z}^d)$ is $(p,\delta)$-dense for $\delta$ large and $(p,\delta)$-separated for $\delta$ small. This generalizes Example \ref{exmp:(p,delta)}.
	%
	
	\begin{thm}[Dense critical case]\label{thm:super_end}
		Let $p,q > 0$ with $pq = 1$. Let $\sigma_A,\sigma_B \colon \mathbf{R}^d \to \mathbf{R}^d$ be diffeomorphisms which are $\frac{1}{p+1}$-pseudohomogeneous and $\frac{1}{q+1}$-pseudohomogeneous, respectively. Put
		\begin{align}\label{eqn:A,B_structure}
			A = \{\delta \sigma_A(n) : n \in \mathbf{Z}^d\}
			\qquad \text{and} \qquad
			B = \{\delta \sigma_B(n) : n \in \mathbf{Z}^d\},
		\end{align}
		where $\delta > 0$ is sufficiently small depending on $p,q,d$ (and on the derivative bounds on $\sigma_A,\sigma_B$). Then $\mathcal{F}_{A,B}$ is injective, and $\Coker \mathcal{F}_{A,B}$ is infinite-dimensional.
	\end{thm}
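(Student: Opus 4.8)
The plan is to transport the entire problem to the integer lattice via the diffeomorphisms $\sigma_A,\sigma_B$ and then to push the argument of the supercritical Theorem~\ref{thm:super_non-end} to the endpoint, the point being that choosing $\delta$ small manufactures a \emph{power-of-$\delta$ surplus} of vanishing conditions over degrees of freedom, which is exactly what one needs in order to survive the critical balance $pq=1$. First I would record the pullback: since $\sigma_A$ is $\tfrac{1}{p+1}$-pseudohomogeneous, the stated derivative bounds (with $|D\sigma_A(x)^{-1}|\sim\langle x\rangle^{p}$ controlling the inverse) show that $f\mapsto f\circ(\delta\sigma_A)$ is a topological automorphism of $\mathcal S(\mathbf R^d)$, and similarly for $\sigma_B$; hence $f|_A=0$ iff $f\circ(\delta\sigma_A)$ vanishes on $\mathbf Z^d$, and $\hat f|_B=0$ iff $\hat f\circ(\delta\sigma_B)$ vanishes on $\mathbf Z^d$. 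By Poisson summation a Schwartz function vanishes on $\mathbf Z^d$ exactly when the periodization of its Fourier transform is identically zero, so both $\Ker\mathcal F_{A,B}$ and, by duality on the sequence spaces, $\Coker\mathcal F_{A,B}$ get rewritten in terms of the oscillatory integral operator with Schwartz kernel $\delta^{d}\,\lvert\det D\sigma_A(x)\rvert\,e^{-2\pi i\delta^{2}\sigma_A(x)\cdot\sigma_B(z)}$ intertwining the two lattice-vanishing conditions; the smoothness of $\sigma_A,\sigma_B$ is what permits a stationary-phase analysis of this operator.

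Next I would make the book-keeping precise. The ball $B_R(0)$ meets $\sim(R/\delta)^{d(p+1)}$ points of $A$, and its frequency counterpart $B_{R'}(0)$ meets $\sim(R'/\delta)^{d(q+1)}$ points of $B$, whereas a Schwartz function carries $\sim(RR')^{d}$ degrees of freedom in the phase-space box of dimensions $R\times R'$. Minimizing $(R/\delta)^{d(p+1)}+(R'/\delta)^{d(q+1)}$ subject to $RR'=T$, and using the identity $(p+1)(q+1)=pq+p+q+1=p+q+2$ which holds precisely because $pq=1$, one finds that the number of vanishing conditions always dominates $T^{d}\delta^{-2d}$ against $\sim T^{d}$ degrees of freedom. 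Thus for $\delta$ small there is a genuine surplus, by the fixed power $\delta^{-2d}$, at every scale; this is the quantitative engine behind both halves of the theorem.

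For injectivity I would feed this surplus into an interpolation-type construction of the kind underlying Theorem~\ref{thm:super_non-end}: the surplus produces, for each $x\in\mathbf R^d$, a Schwartz function that is an approximate reproducing kernel at $x$ whose restrictions to $A$ and $B$ are small, hence a quasi-interpolation identity $f(x)=\sum_{a\in A}f(a)\phi_a(x)+\sum_{b\in B}\hat f(b)\psi_b(x)+(\mathcal E f)(x)$ in which the error operator $\mathcal E$ has norm bounded by a negative power of $\delta$, and iterating forces $f\equiv0$ whenever $f|_A=\hat f|_B=0$. At criticality the naive error series only closes logarithmically, and it is exactly here that the pseudohomogeneous structure — through the stationary-phase control of the intertwining operator above — together with the $\delta^{-2d}$ margin are used to make it converge. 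For the infinite cokernel I would instead construct directly on the lattice side an infinite-dimensional family of polynomially bounded measures $\sum_{n}c_n\delta_{\delta\sigma_A(n)}$ supported on $A$ whose Fourier transforms are measures supported on $B$: taking $c_n=a_n e^{2\pi i\psi(n)}$ with $\psi$ a phase adapted to the pair $(\sigma_A,\sigma_B)$, stationary phase localizes $\sum_n c_n e^{-2\pi i\delta\sigma_A(n)\cdot\eta}$ near the points $\eta=\delta\sigma_B(m)$, and the existence of such a $\psi$ is exactly what the dual-degree relation $\tfrac{1}{p+1}+\tfrac{1}{q+1}=1$ guarantees; letting the branch vary produces infinitely many independent annihilators of the image of $\mathcal F_{A,B}$.

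The step I expect to be the real obstacle is the criticality itself. Unlike the subcritical and supercritical regimes, one cannot perturb $(p,q)$ off the hyperbola $pq=1$ while preserving the set containments that would let one transfer injectivity or surjectivity from Theorems~\ref{thm:sub_non-end}--\ref{thm:super_non-end}, so the hypothesis that $A$ and $B$ are pseudohomogeneous images of $\mathbf Z^d$ is essential, not cosmetic: it is needed both to keep the intertwining oscillatory operator under control and to run the generating-function construction exhibiting the crystalline-type measures that witness the infinite-dimensional cokernel. Making the borderline error estimates in both halves genuinely summable, with the $\delta^{-2d}$ surplus as the only available source of room, is where the bulk of the work lies.
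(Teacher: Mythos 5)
Your global scheme (exploit the lattice structure, oscillatory integrals, a Neumann-series iteration, and the room created by small $\delta$) points in the right direction, but both halves have genuine gaps. For injectivity, the $\delta^{-2d}$ counting surplus is only a heuristic and is \emph{not} the mechanism that saves the critical case: if one runs the iteration using only density information (which is all the surplus argument encodes), the error at a phase-space location $(y,\eta)$ lives in a region of the form $\{|x-y|\lesssim 1,\ |\xi|\gtrsim \delta^{-O(1)}\langle y\rangle^{p}\}$, whose relevant volume grows like $\langle y\rangle^{d}$, and the iteration does not close --- this is exactly the failure mode the paper singles out. What actually rescues the argument is the exact lattice structure via a twisted Poisson summation formula: writing $\mu_A=\sum_n \delta^d\det D\sigma_A(n)\Dirac_{\delta\sigma_A(n)}$, the $m=0$ dual-frequency term reproduces the exact integral and the error is a sum over $m\neq 0$ of non-stationary-phase integrals concentrated near the shifted dual lattice $\delta^{-1}D\tau_A(\delta^{-1}y)^{T}\mathbf{Z}^d+\eta$, which confines the error's phase-space support to a region of volume $O(\delta^{-O(1)})$, independent of $y$. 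You gesture at ``stationary-phase control of the intertwining operator,'' but never identify this cancellation, and you also miss a second necessary ingredient when $p\neq q$: a phase-space decomposition into wave packets of \emph{variable} radii $R_{y,\eta}=\langle y\rangle^{1/2}\langle\eta\rangle^{-1/2}$ (the unit-scale Gabor frame loses powers of $\langle y\rangle$ outside the balanced case $p=q=1$). Also, your error operator must have norm bounded by a \emph{positive} power of $\delta$ (small), not a negative one.

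For the infinite-dimensional cokernel, your construction does not work as stated. An annihilator of $\operatorname{im}\mathcal{F}_{A,B}$ is a polynomially bounded pair of sequences with $\sum_a c_a\Dirac_a=-\widehat{\sum_b d_b\Dirac_b}$ \emph{exactly}, i.e.\ a crystalline-type measure supported on $A$ whose Fourier transform is exactly supported on $B$. Stationary phase applied to $\sum_n c_n e^{-2\pi i\delta\sigma_A(n)\cdot\eta}$ only yields concentration \emph{near} $B$ with rapidly decaying tails, and a distribution approximately supported on $B$ annihilates nothing; producing exact crystalline measures for generic pseudohomogeneous $\sigma_A,\sigma_B$ is a far harder problem (and is not needed). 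The paper's route is soft: pass to the thinned sets $A'=\{\delta\sigma_A(2n)\}$, $B'=\{\delta\sigma_B(2n)\}$, for which the same injectivity proof applies; then the infinite-dimensional space of sequences in $\mathcal{S}(A)\oplus\mathcal{S}(B)$ vanishing on $A'$ and $B'$ meets $\operatorname{im}\mathcal{F}_{A,B}$ in a finite-dimensional subspace (any $f$ whose data vanish on $A',B'$ lies in $\Ker\mathcal{F}_{A',B'}=\{0\}$), so the image has infinite codimension. Replacing your crystalline-measure step with an argument of this kind is essential.
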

	
	
	A sample application of Theorems \ref{thm:sub_end} and \ref{thm:super_end} is
	
	\begin{cor} \label{cor:powers}
		Let $p,q,\delta > 0$ with $pq = 1$. Put
		\begin{align*}
			A = \{\delta |n|^{-\frac{p}{p+1}} n : n \in \mathbf{Z}^d\}
			\qquad \text{and} \qquad
			B = \{\delta |n|^{-\frac{q}{q+1}} n : n \in \mathbf{Z}^d\}.
		\end{align*}
		If $\delta$ is sufficiently small depending on $p,q,d$, then $\mathcal{F}_{A,B}$ is injective. On the other hand, if $\delta$ is sufficiently large depending on $p,q,d$, then $\Ker \mathcal{F}_{A,B}$ is infinite-dimensional.
	\end{cor}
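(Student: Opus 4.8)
The plan is to read off both statements from the critical--case theorems, the work being to check that $A$ and $B$ meet their hypotheses in the stated ranges of $\delta$. For the injectivity claim I would match $A,B$ to the structural form in Theorem~\ref{thm:super_end}. The map $\sigma_0\colon x\mapsto|x|^{-p/(p+1)}x$ is homogeneous of degree $1-\tfrac{p}{p+1}=\tfrac1{p+1}$ on $\mathbf{R}^d\setminus\{0\}$ and restricts to the identity on the unit sphere, so its action on spheres is (trivially) isotopic to an orthogonal transformation; by the remark after the definition of pseudohomogeneity it therefore agrees, outside a bounded set, with a $\tfrac1{p+1}$--pseudohomogeneous diffeomorphism of $\mathbf{R}^d$. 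Performing this modification in a ball about the origin containing no nonzero lattice point --- and, to make $\{\delta\sigma_A(n):n\in\mathbf{Z}^d\}$ equal to $A$ exactly rather than up to the degenerate $n=0$ term, precomposing with a diffeomorphism $\tau$ of $\mathbf{R}^d$ carrying $\mathbf{Z}^d$ bijectively onto $\mathbf{Z}^d\setminus\{0\}$ (for instance the time--one map of a vector field equal to $e_1$ along a thin half--infinite tube about the positive $x_1$--axis and zero away from it, which leaves the pseudohomogeneity intact since $\tau$ is the identity outside the tube and a translation inside it) --- one obtains a $\tfrac1{p+1}$--pseudohomogeneous $\sigma_A$ with $A=\{\delta\sigma_A(n):n\in\mathbf{Z}^d\}$, and likewise a $\tfrac1{q+1}$--pseudohomogeneous $\sigma_B$ with $B=\{\delta\sigma_B(n):n\in\mathbf{Z}^d\}$. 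The derivative bounds on $\sigma_A,\sigma_B$ depend only on $p,q,d$, so, since $pq=1$, Theorem~\ref{thm:super_end} applies once $\delta$ is small in terms of $p,q,d$ and gives injectivity of $\mathcal{F}_{A,B}$.

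For the second claim I would invoke Theorem~\ref{thm:sub_end}, for which it suffices to check that $A$ is $(p,\delta)$--separated and $B$ is $(q,\delta)$--separated once $\delta$ is large in terms of $p,q,d$. This is a Taylor--expansion estimate of the kind quoted after the pseudohomogeneity definition, carried out while tracking the dilation: the point of $A$ indexed by $n\neq 0$ lies at distance $\delta|n|^{1/(p+1)}$ from the origin, neighbouring indices are separated there by $\sim\delta|n|^{-p/(p+1)}\sim\delta^{1+p}\langle x\rangle^{-p}$, and for $|n|=O(1)$ the gaps are $\sim\delta$; in either regime the balls $B_{\delta\langle x\rangle^{-p}}(x)$ with $x\in A$ are pairwise disjoint as soon as $\delta^{p}$ exceeds a constant depending only on $p,d$ --- here it matters that $n=0$ is excluded, since otherwise the ball of radius $\sim\delta$ about the origin would spoil separation for every $\delta$. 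The same holds for $B$ with $q$ in place of $p$. Taking $\delta$ large enough to meet this and the hypothesis of Theorem~\ref{thm:sub_end} simultaneously, we conclude that $\Ker\mathcal{F}_{A,B}$ is infinite--dimensional.

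I do not expect a genuine obstacle --- the corollary exists only to illustrate Theorems~\ref{thm:sub_end} and~\ref{thm:super_end} --- but two points deserve care. First, one must propagate the scaling factor $\delta$ through the separation and density estimates so that these hold with the \emph{same} $\delta$ that appears in those theorems; this is what places ``$\delta$ small'' in the dense regime and ``$\delta$ large'' in the sparse one. Second, the formula $|n|^{-p/(p+1)}n$ degenerates at $n=0$, so some bookkeeping near the origin is needed in the dense case, and it must be folded into the choice of $\sigma_A,\sigma_B$ (as above) so that $A$ and $B$ are literally of the form Theorem~\ref{thm:super_end} requires, rather than merely equal to such sets off a bounded region.
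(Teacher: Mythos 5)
Your overall route is the intended one: the corollary is meant to fall out of Theorems \ref{thm:sub_end} and \ref{thm:super_end} together with the two remarks preceding Theorem \ref{thm:super_end} (extending a homogeneous diffeomorphism of $\mathbf{R}^d\setminus\{0\}$ to a pseudohomogeneous one, and the Taylor-expansion separation statement), and your sparse half is essentially correct as written. The genuine problem is in the dense half, in the precomposition with $\tau$. The time-one map of a vector field $\phi\, e_1$ with $\phi$ a cutoff to a thin half-infinite tube is a shear of the form $x\mapsto x+\phi(x_\perp)e_1$ along the tube; it is \emph{not} ``the identity outside the tube and a translation inside it'' on the transition shell, which runs out to infinity and on which $\nabla^2\tau=O(1)$ does not decay. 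Consequently, for $\sigma=\sigma_{\mathrm{mod}}\circ\tau$ one has at such points
\begin{align*}
	|\nabla^2\sigma(x)| \gtrsim |\nabla\sigma_{\mathrm{mod}}(\tau(x))|\,|\nabla^2\tau(x)| \sim \langle x\rangle^{r-1},
	\qquad r=\tfrac{1}{p+1},
\end{align*}
which violates the requirement $|\nabla^k\sigma(x)|\lesssim_k\langle x\rangle^{r-k}$ for $k\ge 2$ (and no choice of constants repairs a bound that loses a full factor of $\langle x\rangle$ at infinity). So the map you feed into Theorem \ref{thm:super_end} does not satisfy its hypotheses, and the theorem cannot be invoked as you claim. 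This is where your write-up breaks down.

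The fix is that the $\tau$ trick is unnecessary: the $n=0$ term should be read as contributing the origin (this is the paper's convention --- compare Example \ref{exmp:(p,delta)}, whose $d=1$, $p=1$ case is $\{\pm\sqrt{n}\}$, which contains $0$). Then you only need the radial modification of $x\mapsto|x|^{-p/(p+1)}x$ near the origin to fix $0$, which makes $\{\delta\sigma_A(n):n\in\mathbf{Z}^d\}$ literally equal to $A$, with derivative bounds depending only on $p,d$; Theorem \ref{thm:super_end} then applies directly, and similarly for $B$. Two smaller points: your aside that including the origin ``would spoil separation for every $\delta$'' is not right --- with separation parameter $\delta/2$ the ball about the origin has radius $\delta/2$ while the nearest points of $A$ sit at distance $\delta$ with balls of radius $O(\delta^{1-p})$, so $A$ including the origin is $(p,\delta/2)$-separated for $\delta$ large, which is all Theorem \ref{thm:sub_end} asks for; and if one did insist on excluding $n=0$ from $A,B$ in the sparse half, the conclusion for the sets with the origin re-inserted would still follow, since adding finitely many points to $A,B$ imposes only finitely many extra linear conditions on the kernel, which therefore remains infinite-dimensional.
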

	
	With the change of variables $(t,u) = (\frac{1}{p+1},\frac{1}{q+1})$, this is the result advertised in the abstract. Note that $pq = 1$ if and only if $t+u = 1$.
	
	\subsection{Previous work} \label{subsec:previous_work}
	
	As pointed out in \cite{Rad_Viaz}, the classical Nyquist--Shannon sampling theorem \cite{Nyquist, Shannon} is a Fourier uniqueness result: it says that $(\mathbf{Z}, \mathbf{R} \setminus (-\frac{1}{2},\frac{1}{2}))$ is a Fourier uniqueness pair in dimension $1$. There is a corresponding explicit interpolation formula which predates Nyquist and Shannon --- it was proven by Whittaker in 1915 \cite{Whittaker}. Over a century later, the first Fourier uniqueness pair $(A,B)$ with both $A,B$ discrete was discovered by Radchenko and Viazovska in \cite{Rad_Viaz}. This initiated the modern study of Fourier uniqueness and interpolation, leading to work of both algebraic \cite{BRS, CKMRV, Rad_Stol, Sardari, Stoller, Viaz} and analytic \cite{KNS, Ramos_Sousa, Ramos_Stol} flavor.
	
	Progress toward Question \ref{qtn:main} began with \cite{Ramos_Sousa}, in which Ramos and Sousa showed that $\{\pm n^t\}$ is a Fourier uniqueness set in dimension $1$ for $0 < t < 1 - \frac{\sqrt{2}}{2}$ (this is the diagonal case of their more general result for pairs $(\{\pm n^t\}, \{\pm n^u\})$). The threshold $1-\frac{\sqrt{2}}{2} \approx 0.29$ isn't sharp, and the full range $0 < t < \frac{1}{2}$ was later obtained by Kulikov, Nazarov, and Sodin as a consequence of their work \cite{KNS}, which comprehensively answers the injectivity half of Question \ref{qtn:main} in dimension $1$.
	
	In most respects, our results in dimension $1$ are less precise than those of Kulikov--Nazarov--Sodin. They have a similar notion of subcritical and supercritical density (note that their $p,q$ corresponds to our $p+1,q+1$), and they show that a supercritical pair is a Fourier uniqueness pair, whereas $\Ker \mathcal{F}_{A,B}$ is infinite-dimensional for a subcritical pair. For them, the density gap between subcritical and supercritical is a factor of $1-\varepsilon$ (for any $\varepsilon > 0$), while for us, the density gap is a factor of $O_{\varepsilon}(R^{\varepsilon})$ at distance $\sim R$ from the origin. Consequently, our Theorems \ref{thm:sub_end} and \ref{thm:super_end} fall into their subcritical and supercritical cases, respectively. In addition, their results imply a more precise version of our Corollary \ref{cor:powers} in dimension~$1$: there exists an explicit $\delta_{\text{crit}} > 0$ depending on $p,q$ such that if $A = \{\pm \delta n^{\frac{1}{p+1}}\}$ and $B = \{\pm \delta n^{\frac{1}{q+1}}\}$, then $(A,B)$ is a Fourier uniqueness pair when $\delta < \delta_{\text{crit}}$, and $\Ker \mathcal{F}_{A,B}$ is infinite-dimensional when $\delta > \delta_{\text{crit}}$. We believe the analogous statement holds in all dimensions, but we are unsure what the exact value of $\delta_{\text{crit}}$ should be. Another difference between their work and ours is that they only need to consider the asymptotic density of $A,B$ near infinity. Therefore in our Theorem \ref{thm:super_non-end} when $d = 1$, it follows from \cite{KNS} that the size of $\delta$ is irrelevant, so $\Ker \mathcal{F}_{A,B}$ is always zero.
	
	The only thing we prove in dimension $1$ that doesn't appear in \cite{KNS} is that $\Coker \mathcal{F}_{A,B}$ is finite-dimensional in our subcritical case, with surjectivity when $\delta$ is large. This is true, and new, in all dimensions.
	
	Far less was previously known in dimension $>1$. In the dense case, there are various results \cite{CKMRV, Ramos_Sousa, Stoller} under a spherical symmetry assumption (either $A,B$ must be unions of spheres, hence not discrete, or $f$ must be radial), but these are not so different from the one-dimensional setting, as the radial variable plays the main role. Higher-dimensional Fourier uniqueness pairs with $A,B \subseteq \mathbf{R}^d$ discrete were first obtained by Ramos and Stoller \cite{Ramos_Stol} by perturbing a Fourier interpolation formula for functions restricted to spheres of radii $\sqrt{n}$. They in particular produced discrete Fourier uniqueness sets $A = B$. These sets are extremely dense, containing super-exponentially many points in the ball of radius $R$, as $R \to \infty$. Soon after, Viazovska \cite{Viaz} constructed discrete Fourier uniqueness sets by using similar ideas to \cite{Stoller}, but taking $A,B$ to be unions of spherical designs \cite{BRV, DGS} rather than unions of spheres. This gave examples with $R^{O(1)}$ many points in the ball of radius $R$. The exponent was not made explicit in \cite{Viaz}, but it is much larger than necessary. Taking $p = q = 1$ in Corollary \ref{cor:powers}, we obtain extremely simple Fourier uniqueness sets with $\sim R^{2d}$ many points in the ball of radius $R$. Theorems \ref{thm:sub_non-end} and \ref{thm:sub_end} strongly suggest that the exponent $2d$ is optimal. Certainly by Theorem \ref{thm:sub_end}, these Fourier uniqueness sets are optimally well-separated, up to constants.
	
	Our proof of Fourier uniqueness in Theorems \ref{thm:super_non-end} and \ref{thm:super_end} is by real-variable analytic techniques, in contrast with all previous work in dimension~$>1$.
	
	
	
	In the sparse case in higher dimensions, Radchenko and Stoller \cite{Rad_Stol} showed that if $A$ is the ``component-wise square root" of a lattice $\Lambda \subseteq \mathbf{R}^d$ coming from a certain number-theoretic construction ($\Lambda$ should be the inverse different of a totally real number field of degree $d$), then $\Ker \mathcal{F}_{A,A}$ is infinite-dimensional. This $A$ also has $\sim R^{2d}$ many points in the ball of radius $R$. In view of Theorems \ref{thm:super_non-end} and \ref{thm:super_end}, a set $A'$ with $\Ker \mathcal{F}_{A',A'}$ infinite-dimensional cannot be much denser than $A$. On the other hand, Theorem \ref{thm:sub_end} yields a large family of sets $A'$ of density within a constant factor of the density of $A$, such that $\Ker \mathcal{F}_{A',A'}$ is infinite-dimensional for the sole reason that $A'$ is sufficiently well-separated.
	
	\subsection{Notation, conventions, and terminology} \label{subsec:notation}
	
	We use notation and language that is common in harmonic analysis and PDE. Since this paper may be of interest to people from other areas, in Sections \ref{subsubsec:estimates_notn} through \ref{subsubsec:essential} we carefully describe the language we need. First, some general notation.
	
	\subsubsection{General notation}
	
	Our normalization of the Fourier transform is
	\begin{align*}
		\hat{f}(\xi) = \int_{\mathbf{R}^d} f(x) e^{-2\pi ix \cdot \xi} \d x.
	\end{align*}
	The inverse Fourier transform is then
	\begin{align*}
		g^{\vee}(x) = \int_{\mathbf{R}^d} g(\xi) e^{2\pi ix \cdot \xi} \d\xi.
	\end{align*}
	
	The natural numbers $\mathbf{N} = \{0,1,2,\dots\}$ include zero. A \emph{dyadic number} is an integer power of $2$. The set of dyadic numbers is denoted $2^{\mathbf{Z}}$. The subset of dyadic numbers $\geq 1$ is $2^{\mathbf{N}}$.
	
	Recall from \eqref{eqn:Japanese_bracket} that the Japanese bracket $\langle x \rangle$ is defined to be $\sqrt{1+|x|^2}$, which is a smooth function comparable to $\max\{1,|x|\}$.
	
	The open ball of radius $R$ around $x_0 \in \mathbf{R}^d$ is denoted $B_R(x_0)$.
	
	Given $E \subseteq F \subseteq \mathbf{R}^d$ and $\delta > 0$, the subset $E$ is \emph{$\delta$-dense} in $F$ if the balls $B_{\delta}(x)$ for $x \in E$ cover $F$. When $F = \mathbf{R}^d$, this is the same as $E$ being $(0,\delta)$-dense as defined in Section \ref{subsec:main_results}.
	
	\subsubsection{Estimates} \label{subsubsec:estimates_notn}
	
	Throughout, we allow all constants to depend on $p,q,d$, where $p,q,d$ are as in the theorems. So for nonnegative quantities $X,Y \geq 0$, the notation $X \lesssim_{a,b,\dots} Y$ or equivalently $Y \gtrsim_{a,b,\dots} X$ means that $X \leq CY$, where $C$ is a constant depending only on $p,q,d$, and the parameters $a,b,\dots$ (we occasionally include $p,q,d$ in the list of parameters for emphasis). We write $X \sim_{a,b,\dots} Y$ when both $X \lesssim_{a,b,\dots} Y$ and $X \gtrsim_{a,b,\dots} Y$. The expression $X \ll_{a,b,\dots} Y$ means $X \leq cY$, where $c$ is a constant which is sufficiently small for the relevant context, but which is not too small in the sense that $c \gtrsim_{a,b,\dots} 1$. It is typical to split an argument into cases based on whether $X \ll_{a,b,\dots} Y$ or $X \gtrsim_{a,b,\dots} Y$. The strongest type of inequality is $X \lll Y$, which denotes the informal assertion that $X$ is smaller than $Y$ by much more than a constant factor. We only use $X \lll Y$ for heuristics, never in a formal proof. When $X \in \mathbf{C}$ and $Y \geq 0$, the big-$O$ notation $X = O_{a,b,\dots}(Y)$ means $|X| \lesssim_{a,b,\dots} Y$.
	
	For $h$ nonnegative, $h^{\infty}$ denotes any quantity which is $\lesssim_j h^j$ for all $j \in \mathbf{N}$. For example, a smooth function $f$ on $\mathbf{R}^d$ is Schwartz if and only if it satisfies the derivative bounds
	\begin{align} \label{eqn:Schwartz_def}
		|\nabla^k f(x)| \lesssim_{f,k} \langle x \rangle^{-\infty}
	\end{align}
	for all $k \in \mathbf{N}$. Here $\nabla^k f$ is the tuple of $k$th partial derivatives. When $k = 0$, this is just $f$, and when $k = 1$, it is the gradient. When $k > 1$, there is no natural order on the components of $\nabla^k f$, so we only write $\nabla^k f$ when the order doesn't matter (e.g., if $\nabla^k f$ is in absolute values as in \eqref{eqn:Schwartz_def}). 
	
	
	\subsubsection{Adapted Schwartz and bump functions} \label{subsubsec:adapted}
	
	A \emph{standard Schwartz function} is one which obeys the derivative bounds
	\begin{align} \label{eqn:std_Schwartz_def}
		|\nabla^k f(x)| \lesssim_k \langle x \rangle^{-\infty}.
	\end{align}
	The point of the definition is that unlike in \eqref{eqn:Schwartz_def}, the constants here are not allowed to depend on $f$, so an infinite family of standard Schwartz functions satisfies uniform derivative bounds. A \emph{standard bump function} is a standard Schwartz function which is compactly supported in a ball around the origin of radius $\lesssim 1$. So $f \in C_c^{\infty}(\{|x| \lesssim 1\})$ is a standard bump function if and only if
	\begin{align*}
		|\nabla^k f(x)| \lesssim_k 1.
	\end{align*}
	A \emph{Schwartz (resp. bump) function adapted to a ball} $B_R(x_0)$ in $\mathbf{R}^d$ is a function $f$ of the form
	\begin{align} \label{eqn:adapted_basic}
		f(x) = \chi\Big(\frac{x-x_0}{R}\Big),
	\end{align}
	where $\chi$ is a standard Schwartz (resp. bump) function. More generally, an \emph{$L^p$-normalized Schwartz (resp. bump) function adapted to $B_R(x_0)$} is of the form
	\begin{align*}
		f(x) = R^{-d/p} \chi\Big(\frac{x-x_0}{R}\Big)
	\end{align*}
	with $\chi$ a standard Schwartz (resp. bump) function. So $\|f\|_{L^p} \lesssim 1$. If $L^p$-normalization is not specified, then by default $f$ is $L^{\infty}$-normalized; this agrees with the definition \eqref{eqn:adapted_basic}. Given $x_0,\xi_0 \in \mathbf{R}^d$ and $R > 0$, an \emph{$L^p$-normalized wave packet adapted to $B_R(x_0) \times B_{R^{-1}}(\xi_0)$ in phase space} is a function $f$ on $\mathbf{R}^d$ of the form
	\begin{align*} 
		f(x) = R^{-d/p} \chi\Big(\frac{x-x_0}{R}\Big) e^{2\pi i\xi_0 \cdot x},
	\end{align*}
	where $\chi$ is a standard Schwartz function (see, e.g., \cite{Folland} for a reference on phase space). Again, $p = \infty$ is the default. The Fourier transform $\hat{f}$ is an $L^{p'}$-normalized wave packet adapted to $B_{R^{-1}}(\xi_0) \times B_{R}(-x_0)$, where $\frac{1}{p} + \frac{1}{p'} = 1$. This illustrates that, in a sense, the Fourier transform acts on phase space by rotation by $-90$ degrees: $(x,\xi) \mapsto (\xi,-x)$.
	If $R^{-1} \gtrsim |\xi_0|$, then $f$ is indistinguishable from an $L^p$-normalized Schwartz function adapted to $B_R(x_0)$. Similarly if $R \gtrsim |x_0|$, then $\hat{f}$ is indistinguishable from an $L^{p'}$-normalized Schwartz function adapted to $B_{R^{-1}}(\xi_0)$. This is a manifestation of the uncertainty principle (see \cite{Tao_phase} for a very nice discussion).
	In general, a \emph{wave packet} is a function
	\begin{align} \label{eqn:wave_packet_def}
		f(x) = A \chi\Big(\frac{x-x_0}{R}\Big) e^{2\pi i\xi_0 \cdot x}
	\end{align}
	for some $x_0,\xi_0 \in \mathbf{R}^d$, $A,R > 0$, and standard Schwartz $\chi$.
	
	\subsubsection{Essential support} \label{subsubsec:essential}
	
	The \emph{essential support}, also called the \emph{essential physical support}, of a function $f$, denoted $\esssupp f$, is the region where most of its mass lives. This is a somewhat fuzzy notion, but in practice it is usually clear what it means. For example, the essential support of \eqref{eqn:adapted_basic} is $\{|x-x_0| \lesssim R\}$. The \emph{essential Fourier support} of $f$ is the essential support of $\hat{f}$. One can also talk about the \emph{essential phase space support} of $f$, which is the set of $(x,\xi) \in \mathbf{R}^d \times \mathbf{R}^d$ such that near $x$, $f$ oscillates at frequencies near $\xi$. The most important example is a wave packet \eqref{eqn:wave_packet_def}, which has essential physical support $\{|x-x_0| \lesssim R\}$, essential Fourier support $\{|\xi-\xi_0| \lesssim R^{-1}\}$, and essential phase space support the Cartesian product of these two balls. The uncertainty principle says that the essential phase space support of a function $f$ always has volume $\gtrsim 1$ in $\mathbf{R}^d \times \mathbf{R}^d$. This is sharp for wave packets. An upper bound (i.e., a superset) for the essential phase space support of $f$ may be found by writing $f$ as a sum of wave packets and using the subadditivity property
	\begin{align*}
		\esssupp(g+h)
		\subseteq \esssupp g \cup \esssupp h
	\end{align*}
	(which we have written for physical essential support but which makes sense for Fourier and phase space essential support too). In general, we use the word ``essential" to indicate that one should ignore rapid decay in whatever we're about to say. This is analogous to the use of ``essential" in measure theory to indicate that measure zero sets should be ignored.
	
	\subsection{Outlines of proofs} This section is more informal than the rest of the paper.
	
	We present the sparse setting (Theorems \ref{thm:sub_non-end} and \ref{thm:sub_end}) first and the dense setting (Theorems \ref{thm:super_non-end} and \ref{thm:super_end}) second. This is roughly in order of difficulty. These two settings are independent of each other.
	
	\subsubsection{Subcritical case (Theorem \ref{thm:sub_non-end})} \label{subsubsec:sub_non-end}
	
	The main challenge is to show that $\mathcal{F}_{A,B}$ is surjective whenever $\delta$ is sufficiently large. Once this is established, the rest of the theorem will follow by some simple reductions (carried out in Section \ref{subsec:completing_sub_non-end}). So assume $\delta$ is large.
	
	If $A',B'$ are any translates of $A,B$, then $\mathcal{F}_{A,B}$ is conjugate to $\mathcal{F}_{A',B'}$, so $\mathcal{F}_{A,B}$ is surjective if and only if $\mathcal{F}_{A',B'}$ is surjective. Therefore by translating $A,B$, we may assume all points in $A,B$ have magnitude $\gtrsim \delta^{\varpi}$, where $\varpi \gtrsim 1$ is a fixed small constant.
	
	Let $(\alpha,\beta) \in \mathcal{S}(A) \oplus \mathcal{S}(B)$. We want to find a Schwartz function $f \in \mathcal{S}(\mathbf{R}^d)$ solving the equations $f|_A = \alpha$ and $\hat{f}|_B = \beta$. We do this by an iterative procedure, constructing a sequence of approximate solutions $f_n$ converging to $f$. Suppose for illustration that $\beta = 0$ (this is no loss in generality, because if we can handle the case $\beta = 0$, then by Fourier symmetry we can handle the case $\alpha = 0$, and then by linearity we can handle the general case $(\alpha,\beta) = (\alpha,0) + (0,\beta)$). Let $\{|a| \lesssim R\}$ be a large ball on which $\alpha$ is essentially supported.
	
	For each $a_0 \in A$, let $\varphi_{a_0}$ be a Schwartz function adapted to the ball $\{|x-a_0| \lesssim |a_0|^{-p}\}$, with $\varphi_{a_0}(a_0) = 1$. Then since $A$ is $(p,\delta)$-separated and $\delta$ is large, $\varphi_{a_0}(a) \approx 1_{a = a_0}$ for $a \in A$ (the error in $\approx$ comes from Schwartz tails; it can be avoided by taking $\varphi_{a_0}$ to be compactly supported, but it doesn't matter). The essential Fourier support of $\varphi_{a_0}$ is $\{|\xi| \lesssim |a_0|^p\}$. For each $b_0 \in B$, similarly let $\psi_{b_0}$ be a Schwartz function such that $\widehat{\psi_{b_0}}$ is adapted to the ball $\{|\xi-b_0| \lesssim |b_0|^{-q}\}$, and $\widehat{\psi_{b_0}}(b_0) = 1$. Then $\widehat{\psi_{b_0}}(b) \approx 1_{b=b_0}$ for $b \in B$, and $\psi_{b_0}$ has essential physical support $\{|x| \lesssim |b_0|^q\}$.
	
	Define the first approximate solution by
	\begin{align*}
		f_1 = \sum_{a_0 \in A} \alpha(a_0) \varphi_{a_0}.
	\end{align*}
	Then $f_1|_A \approx \alpha$, but on the Fourier side the approximation may be bad, i.e., $\widehat{f_1}|_B \not\approx 0$. Since $\alpha$ is essentially supported on $\{|a| \lesssim R\}$, the essential Fourier support of $f_1$ is
	\begin{align} \label{eqn:ess_supp_f_1_hat}
		\esssupp \widehat{f_1}
		\subseteq \bigcup_{|a_0| \lesssim R} \esssupp \widehat{\varphi_{a_0}}
		= \{|\xi| \lesssim R^p\}.
	\end{align}
	We construct $f_2$ to fix the error on the Fourier side:
	\begin{align*}
		f_2 = f_1 - \sum_{b_0 \in B} \widehat{f_1}(b_0) \psi_{b_0}.
	\end{align*}
	Then $\widehat{f_2}|_B \approx 0$, but the approximation on the physical side is ruined. It seems at first glance that we have made no progress. However, we have won something on the physical side: the essential support of the error $\alpha - f_2|_A$ has shrunk. Indeed, using $\alpha-f_2|_A \approx \sum \widehat{f_1}(b_0) \psi_{b_0}|_A$ and \eqref{eqn:ess_supp_f_1_hat},
	\begin{align*}
		\esssupp (\alpha-f_2|_A)
		\subseteq \bigcup_{|b_0| \lesssim R^p} \esssupp \psi_{b_0}
		= \{|x| \lesssim R^{pq}\}.
	\end{align*}
	Since we are in the subcritical case $pq < 1$, the right hand side is much smaller than our upper bound $\{|a| \lesssim R\}$ for the essential support of $\alpha$. This allows us to iterate.
	
	Having defined $f_{2k}$, recursively define
	\begin{align*}
		f_{2k+1} = f_{2k} + \sum_{a_0 \in A} (\alpha-f_{2k})(a_0) \varphi_{a_0}
		\qquad \text{and} \qquad
		f_{2k+2} = f_{2k+1} - \sum_{b_0 \in B} \widehat{f_{2k+1}}(b_0) \psi_{b_0}.
	\end{align*}
	By induction,
	\begin{align*}
		\widehat{f_{2k}}|_B \approx 0
		\qquad \text{and} \qquad
		\esssupp(\alpha-f_{2k}|_A) \subseteq \{|a| \lesssim R^{(pq)^k}\},
	\end{align*}
	and also
	\begin{align*}
		f_{2k+1}|_A \approx \alpha
		\qquad \text{and} \qquad
		\esssupp \widehat{f_{2k+1}}|_B \subseteq \{|b| \lesssim R^{p(pq)^k}\}.
	\end{align*}
	These essential supports eventually end up within a ball around the origin of bounded radius, and hence at distance $\gtrsim \delta^{\varpi}$ from $A,B$. Once this happens, the approximation $\mathcal{F}_{A,B} f_n$ gets closer to $(\alpha,0)$ pointwise with each iteration, and consequently the change $f_{n+1}-f_n$ gets smaller. We therefore expect that, in some suitable sense, the $f_n$ converge to a function $f$ with $\mathcal{F}_{A,B}f = (\alpha,0)$.
	
	For the rigorous proof, it is convenient to rephrase this iteration procedure in terms of a parametrix $P$ for the operator $\mathcal{F}_{A,B}$, i.e., an approximate right inverse $P \colon \mathcal{S}(A) \oplus \mathcal{S}(B) \to \mathcal{S}(\mathbf{R}^d)$ of $\mathcal{F}_{A,B}$. Let
	\begin{align}\label{eqn:sub_P_def}
		P(\alpha,\beta)
		= \sum_{a_0 \in A} \alpha(a_0) \varphi_{a_0} + \sum_{b_0 \in B} \beta(b_0) \psi_{b_0}.
	\end{align}
	The series
	\begin{align}\label{eqn:FP_series_inverse}
		\sum_{j=0}^{\infty} (1-\mathcal{F}_{A,B}P)^{j}
	\end{align}
	formally inverts $\mathcal{F}_{A,B}P$, so if we can show that it converges in the strong operator topology on the space of continuous linear operators on $\mathcal{S}(A) \oplus \mathcal{S}(B)$ (where $\mathcal{S}(A)$ and $\mathcal{S}(B)$ have the natural Fr\'echet topology), then we can conclude that $\mathcal{F}_{A,B}$ is surjective. In the above notation,
	\begin{align*}
		f_n \approx P \sum_{j=0}^{n-1} (1-\mathcal{F}_{A,B}P)^j (\alpha,0)
	\end{align*}
	(again, this $\approx$ becomes an exact equality if $\varphi_{a_0}$ and $\widehat{\psi_{b_0}}$ are chosen to be compactly supported). Thus we see that proving the convergence of \eqref{eqn:FP_series_inverse} amounts to making the above heuristics precise.
	
	\subsubsection{Sparse critical case (Theorem \ref{thm:sub_end})} \label{subsubsec:sub_end}
	
	We aim to prove that $\Ker \mathcal{F}_{A,B}$ is infinite-dimensional. As in the subcritical case (see Section \ref{subsec:completing_sub_non-end}), this would follow if we could show that  $\mathcal{F}_{A',B'}$ is surjective, where $A',B'$ are constructed from $A,B$ by adding a thin sequence of points, $A'$ is $(p,\delta')$-separated, $B'$ is $(q,\delta')$-separated, and $\delta' \gtrsim \delta$. Rename $A = A'$, $B = B'$, and $\delta = \delta'$. Then $A,B$ satisfy the assumptions of Theorem \ref{thm:sub_end}, and it suffices to show that $\mathcal{F}_{A,B}$ is surjective. Actually, it is clear from the argument in Section \ref{subsec:completing_sub_non-end} that surjectivity onto $\mathcal{S}(A) \oplus \mathcal{S}(B)$ is stronger than necessary; it is enough if all finitely supported sequences are in the image of $\mathcal{F}_{A,B}$. We will show the intermediate statement that the image of $\mathcal{F}_{A,B}$ contains $\mathcal{T}(A,B)$, where $\mathcal{T}(A,B)$ is a Banach space of sequences which decay at a specific super-polynomial rate (so $\mathcal{T}(A,B) \hookrightarrow \mathcal{S}(A) \oplus \mathcal{S}(B)$).
	
	The strategy to do this is basically the same as in the subcritical case. We define $P$ by the same formula \eqref{eqn:sub_P_def}, and we check that the series \eqref{eqn:FP_series_inverse} converges in the norm topology on the space of bounded linear operators on $\mathcal{T}(A,B)$. The only technical detail is that we need to be a little more careful about how we choose the functions $\varphi_{a_0},\psi_{b_0}$. First, we let $\varphi_{a_0}$ and $\widehat{\psi_{b_0}}$ be Schwartz functions adapted to the balls
	\begin{align*}
		\{|x-a_0| \lesssim \delta^{1/2} |a_0|^{-p}\}
		\qquad \text{and} \qquad
		\{|\xi-b_0| \lesssim \delta^{1/2} |b_0|^{-q}\},
	\end{align*}
	respectively. The extra factor of $\delta^{1/2}$ here makes
	\begin{align}\label{eqn:sub_end_phi_psi_supp}
		\esssupp \widehat{\varphi_{a_0}}
		= \{|\xi| \lesssim \delta^{-1/2} |a_0|^p\}
		\qquad \text{and} \qquad
		\esssupp \psi_{b_0}
		= \{|x| \lesssim \delta^{-1/2} |b_0|^q\}
	\end{align}
	smaller by a factor of $\delta^{-1/2}$. This $\delta^{-1/2}$ ensures that we still have the phenomenon of ``shrinking essential support" that we exploited in the subcritical case.
	
	Second, we choose $\widehat{\varphi_{a_0}}$ and $\psi_{b_0}$ to be compactly supported on the regions \eqref{eqn:sub_end_phi_psi_supp}. Otherwise, unlike in the subcritical case, the Schwartz tails of $\widehat{\varphi_{a_0}}$ and $\psi_{b_0}$ would cause problems. Note that once we do this, we no longer have the freedom to take $\varphi_{a_0}$ and $\widehat{\psi_{b_0}}$ to be compactly supported. Therefore, the $\approx$'s in the analysis in Section \ref{subsubsec:sub_end} cannot be replaced by $=$'s. Fortunately, this is not an issue.
	
	Third, we take $\widehat{\varphi_{a_0}}$ and $\psi_{b_0}$ to be Gevrey \cite{Rodino}, meaning that $\varphi_{a_0}$ and $\widehat{\psi_{b_0}}$ have quasi-exponential decay (see \eqref{eqn:quasi-exp_decay} and \eqref{eqn:phi_psi_Gevrey}). This is necessary to match the fast decay of sequences in $\mathcal{T}(A,B)$.
	
	\subsubsection{Supercritical case (Theorem \ref{thm:super_non-end})} \label{subsubsec:super_non-end}
	
	Similarly to the sparse setting, the main difficulty is to show that $\mathcal{F}_{A,B}$ is injective whenever $\delta$ is sufficiently small. So assume $\delta$ is small.
	
	Let $\mathcal{M}_{A,B} \subseteq \mathcal{S}'(\mathbf{R}^d)$ denote the closed linear span of tempered measures supported on $A$ and inverse Fourier transforms of tempered measures supported on $B$. To prove injectivity of $\mathcal{F}_{A,B}$, it suffices by duality to show that $\mathcal{M}_{A,B} = \mathcal{S}'(\mathbf{R}^d)$, or in other words, that the inclusion $\mathcal{M}_{A,B} \hookrightarrow \mathcal{S}'(\mathbf{R}^d)$ is surjective. We will construct a parametrix $P$ for this inclusion, so $P$ is a map taking $f$ on $\mathbf{R}^d$ to $Pf \in \mathcal{M}_{A,B}$, where $Pf \approx f$. Iterating the parametrix will then give $\mathcal{M}_{A,B} = \mathcal{S}'(\mathbf{R}^d)$, and hence that $\mathcal{F}_{A,B}$ is injective.
	
	To make the approximation $\mathcal{M}_{A,B} \ni Pf \approx f$, we use the Gabor inversion formula (see Section \ref{subsec:Gabor}) to decompose $f$ in a phase space basis $(\chi_{y,\eta})_{y,\eta \in \mathbf{R}^d}$, where $\chi_{y,\eta}$ is a wave packet adapted to $B_1(y) \times B_1(\eta)$ in phase space. By linearity, it then suffices to approximate each $\chi_{y,\eta}$ by some $\nu_{y,\eta} \in \mathcal{M}_{A,B}$. To simplify matters, let us suppose for now that $p,q > 1$ (all the ideas appear in this case). Suppose further that $\langle y \rangle \geq \langle \eta \rangle$ (the case $\langle y \rangle < \langle \eta \rangle$ is symmetric via the Fourier transform). Then we will take $\nu_{y,\eta}$ to simply be a measure on $A$ (when $\langle y \rangle < \langle \eta \rangle$ we take $\nu_{y,\eta}$ to be the inverse Fourier transform of a measure on $B$, respecting the Fourier symmetry). Near $y$, the set $A$ is $O(\delta \langle y \rangle^{-p})$-dense, so it is reasonable to hope that we can choose the measure $\nu_{y,\eta}$ to be a good approximation to $\chi_{y,\eta}$ at frequency scales $\ll \delta^{-1} \langle y \rangle^p$ (at higher frequency scales, the uncertainty principle prevents us from making a good approximation). After solving one technical problem, described below, we will indeed be able to find such a $\nu_{y,\eta}$. Since $p > 1$ and $\langle y \rangle \geq \langle \eta \rangle$, we have $\langle \eta \rangle \lll \delta^{-1} \langle y \rangle^p$, so $\chi_{y,\eta}$ has essential Fourier support at frequencies much smaller than $\delta^{-1} \langle y \rangle^p$. Therefore $\nu_{y,\eta}$ will capture the full behavior of $\chi_{y,\eta}$, not just the low-frequency behavior. However, again by the uncertainty principle, $\nu_{y,\eta}$ must itself have high-frequency behavior. From these considerations (plus the fact that $\nu_{y,\eta}$ will have the same essential physical support as $\chi_{y,\eta}$), we expect the error $\nu_{y,\eta} - \chi_{y,\eta}$ to have essential phase space support in the region
	\begin{align} \label{eqn:nu-chi_portrait}
		\{(x,\xi) \in \mathbf{R}^d \times \mathbf{R}^d : |x-y| \lesssim 1 \text{ and } |\xi| \gtrsim \delta^{-1} \langle y \rangle^p\},
	\end{align}
	with essentially all of the mass coming from the high frequency behavior of $\nu_{y,\eta}$.
	
	Now, the crucial observation is that because $\langle \eta \rangle \lll \delta^{-1} \langle y \rangle^p$, this region \eqref{eqn:nu-chi_portrait} is much further from the origin than the essential phase space support of $\chi_{y,\eta}$. Ultimately, this means that when we iterate the parametrix to produce a sequence of approximations
	\begin{align*}
		f_n = P \sum_{j=0}^{n-1} (1-P)^j f \in \mathcal{M}_{A,B}
	\end{align*}
	to $f$, the error $f-f_n$ will have essential phase space support going out to infinity as $n \to \infty$. This phenomenon --- the opposite of the ``shrinking essential support" phenomenon in the sparse setting --- is why $f_n \to f$ as tempered distributions.
	
	The technical problem referred to above is that in the approximation $\nu_{y,\eta} \approx \chi_{y,\eta}$, we in some sense need an arbitrary power saving error term at low frequencies. The naive way to approximate a function by a discrete measure is to use Riemann sums; this gives a power saving error, but not an arbitrary power.
	In dimension $1$, one can do better by using the trapezoid rule instead of Riemann sums. The trapezoid rule is further improved by Simpson's rule. In Section \ref{subsec:sampling}, we formulate and prove a generalization of these sorts of numerical integration techniques. This will allow us to produce a sufficiently good approximation $\nu_{y,\eta} \approx \chi_{y,\eta}$.
	
	The general case where one of $p,q$ may be $\leq 1$ is almost entirely the same, except the condition $\langle y \rangle \geq \langle \eta \rangle$ looks a little more complicated (see \eqref{eqn:nu_def_non-end}).
	
	\subsubsection{Dense critical case (Theorem \ref{thm:super_end})} \label{subsubsec:super_end}
	
	Again, the primary challenge is to show injectivity.
	It follows from \eqref{eqn:A,B_structure} that the sets $A,B$ are $(p,O(\delta^{p+1}))$-dense and $(q,O(\delta^{q+1}))$-dense, respectively.
	First consider the case $p = q = 1$ where $A,B$ have balanced density (the general case is discussed below).
	Our strategy is the same as in Section \ref{subsubsec:super_non-end}.
	By duality and linearity, we reduce to approximating the wave packet $\chi_{y,\eta}$ by some $\nu_{y,\eta} \in \mathcal{M}_{A,B}$, for each $y,\eta \in \mathbf{R}^d$. By Fourier symmetry, we reduce further to $\langle y \rangle \geq \langle \eta \rangle$. Then we take $\nu_{y,\eta}$ to be a measure on $A$. Unlike in the supercritical case where we had made no assumption on the structure of $A$, here we can describe $\nu_{y,\eta}$ explicitly in terms of $\sigma_A$, and avoid using Section \ref{subsec:sampling}. The analysis in Section \ref{subsubsec:super_non-end} shows that the essential support of $\nu_{y,\eta} - \chi_{y,\eta}$ in phase space is contained in
	\begin{align}\label{eqn:nu-chi_portrait_end_trivial}
		\{(x,\xi) \in \mathbf{R}^d \times \mathbf{R}^d : |x-y| \lesssim 1 \text{ and } |\xi| \gtrsim \delta^{-2} \langle y \rangle\}
	\end{align}
	(this agrees with \eqref{eqn:nu-chi_portrait} because $A$ is $(1,O(\delta^2))$-dense). If we try to mimic the arguments from the supercritical case using only this information about $\nu_{y,\eta} - \chi_{y,\eta}$, then the errors accumulate too quickly when iterating the parametrix. This is because the region \eqref{eqn:nu-chi_portrait_end_trivial} is only a constant factor (depending on $\delta$) further from the origin than the essential support of $\chi_{y,\eta}$ in phase space, which is not enough to compensate for the fact that \eqref{eqn:nu-chi_portrait_end_trivial} is much larger than the essential phase space support of $\chi_{y,\eta}$. The portion of \eqref{eqn:nu-chi_portrait_end_trivial} with $|\xi| \sim \delta^{-2} \langle y \rangle$ has volume $\sim \delta^{-2d} \langle y \rangle^d$, and the factor of $\langle y \rangle^d$ kills us. We need the intersection of $\{|\xi| \lesssim \delta^{-2} \langle y \rangle\}$ with the essential phase space support of $\nu_{y,\eta} - \chi_{y,\eta}$ to have volume bounded independent of $y$. To overcome this obstacle, we use the structure \eqref{eqn:A,B_structure} of $A$ to get further control on $\nu_{y,\eta} - \chi_{y,\eta}$ in the region \eqref{eqn:nu-chi_portrait_end_trivial}, and thereby reduce the essential phase space support of $\nu_{y,\eta}-\chi_{y,\eta}$ to a much smaller subregion.
	
	This extra control we need is at frequencies $\sim \delta^{-2} \langle y \rangle$, which is a constant factor larger than what the uncertainty principle tells us we can handle. There is a simple model situation where one can understand the Fourier transform of a discrete measure beyond the uncertainty threshold: this is when the measure is a sum of Dirac masses on a lattice. Then by Poisson summation, the Fourier transform is a sum of Dirac masses on the dual lattice (up to a multiplicative constant). In particular, when the lattice is $\mathbf{Z} \subseteq \mathbf{R}$,
	\begin{align*}
		\widehat{\sum_{n \in \mathbf{Z}} \delta_n}
		= \sum_{n \in \mathbf{Z}} \delta_n.
	\end{align*}
	Since $\mathbf{Z}$ is $1$-separated, the uncertainty principle suggests that it should be hard to control this Fourier transform at frequencies $|\xi| \gtrsim 1$. However, the periodicity of $\mathbf{Z}$ leads to cancellation at all non-integer frequencies $\xi$. We will win by exploiting cancellation of this sort.
	
	Near the point $y \in \mathbf{R}^d$, it follows from \eqref{eqn:A,B_structure} and Taylor expansion that $A$ looks like a lattice $L$ where the distance between any two adjacent lattice points is $\sim \delta^2 \langle y \rangle^{-1}$.
	The measure $\nu_{y,\eta}$ will be a linear combination of Dirac masses on $A$, where the coefficients oscillate at frequency $\eta$.
	We therefore expect $\widehat{\nu_{y,\eta}}$ to be concentrated near $L^{\vee}+\eta$, where $L^{\vee}$ is the lattice dual to $L$. Indeed, we will find that the essential phase space support of $\nu_{y,\eta}-\chi_{y,\eta}$, intersected with $\{|\xi| \lesssim \delta^{-2} \langle y \rangle\}$, is contained in
	\begin{align}\label{eqn:nu-chi_portrait_sharp}
		\bigcup_{\substack{0 \neq \lambda \in L^{\vee} \\ |\lambda + \eta| \lesssim \delta^{-2}\langle y \rangle}} \{(x,\xi) \in \mathbf{R}^d \times \mathbf{R}^d : |x-y| \lesssim 1 \text{ and } |\xi-\eta-\lambda| \lesssim \delta^{-2}\}
	\end{align}
	(this follows from \eqref{eqn:end_portrait_main}).
	Here $\lambda = 0$ is excluded because of \eqref{eqn:nu-chi_portrait_end_trivial}.
	The distance between adjacent points in $L^{\vee}$ is $\sim \delta^{-2} \langle y \rangle$, so only $O(1)$ many $\lambda$'s appear in the union. Since $\langle \eta \rangle \leq \langle y \rangle$, we have $|\lambda+\eta| \sim |\lambda| \sim \delta^{-2}\langle y \rangle$ for each $\lambda$ which appears. Therefore \eqref{eqn:nu-chi_portrait_sharp} is a subregion of \eqref{eqn:nu-chi_portrait_end_trivial}, and \eqref{eqn:nu-chi_portrait_sharp} has volume $O(\delta^{-2d})$. As desired, this volume bound is independent of $y$.
	
	In the general case where $p$ and $q$ are not necessarily the same, there is one additional technical wrinkle needed to avoid losing powers of $\langle y \rangle$ in our estimates. So far, we have been using the phase space basis $(\chi_{y,\eta})_{y,\eta \in \mathbf{R}^d}$, with $\chi_{y,\eta}$ a wave packet adapted to $B_1(y) \times B_1(\eta)$. Here the choice of radius $1$ is arbitrary. Instead, we need to use a phase space basis $(\rho_{y,\eta})_{y,\eta \in \mathbf{R}^d}$, where $\rho_{y,\eta}$ is a wave packet adapted to $B_{R_{y,\eta}}(y) \times B_{R_{y,\eta}^{-1}}(\eta)$ in phase space, and the radius $R_{y,\eta}$ may vary with $y,\eta$. By coincidence, $R_{y,\eta} = 1$ works when $p = q = 1$, but it does not work in general (see Remark \ref{rem:R_y,eta}). We construct the basis $(\rho_{y,\eta})_{y,\eta \in \mathbf{R}^d}$ and describe the associated decomposition in this basis in Section \ref{subsec:modified_Gabor}.
	
	\section*{Acknowledgements}
	
	I thank Noah Kravitz and Anna Skorobogatova for helpful discussions, and Noah Kravitz and Maryna Viazovska for their feedback on earlier drafts. I also thank Mikhail Sodin for sharing an up-to-date version of the paper \cite{KNS}. Finally, I am grateful to my advisor, Peter Sarnak, for his enthusiasm and encouragement throughout this project. I am supported by the National Science Foundation Graduate Research Fellowship Program under Grant No. DGE-2039656.
	
	\section{Subcritical case: proof of Theorem \ref{thm:sub_non-end}} \label{sec:sub_non-end}
	
	\subsection{Surjectivity of $\mathcal{F}_{A,B}$} \label{subsec:sub_non-end_surjective} Assume in this section that $\delta$ is large. Then we wish to show that $\mathcal{F}_{A,B}$ is surjective. We follow the plan outlined in Section \ref{subsubsec:sub_non-end}. By translating $A,B$, assume without loss of generality that all points in $A,B$ have magnitude $\gtrsim \delta^{\varpi}$, where $\varpi \gtrsim 1$ is fixed. In particular, since $\delta$ is large, all points have magnitude $\geq 1$.
	
	For $a_0 \in A$, let $\varphi_{a_0}$ be a bump function adapted to
	\begin{align*}
		\{|x-a_0| \lesssim |a_0|^{-p}\},
	\end{align*}
	with $\varphi_{a_0}(a_0) = 1$. For $b_0 \in B$, let $\psi_{b_0}$ be such that $\widehat{\psi_{b_0}}$ is a bump function adapted to
	\begin{align*}
		\{|\xi-b_0| \lesssim |b_0|^{-q}\},
	\end{align*}
	with $\widehat{\psi_{b_0}}(b_0) = 1$. We choose these to be compactly supported as a matter of convenience, so that the $\approx$'s in Section \ref{subsubsec:sub_non-end} all become $=$'s, but this isn't really necessary. By the separation assumptions on $A$ and $B$,
	\begin{align}\label{eqn:phi_psi_indicator}
		\varphi_{a_0}|_A = 1_{\cdot = a_0}
		\qquad \text{and} \qquad
		\widehat{\psi_{b_0}}|_B = 1_{\cdot = b_0}.
	\end{align}
	Let $P \colon \mathcal{S}(A) \oplus \mathcal{S}(B) \to \mathcal{S}(\mathbf{R}^d)$ be given by
	\begin{align*}
		P(\alpha,\beta)
		= \sum_{a_0 \in A} \alpha(a_0) \varphi_{a_0} + \sum_{b_0 \in B} \beta(b_0) \psi_{b_0}.
	\end{align*}
	We want to show that the series
	\begin{align} \label{eqn:sub_non-end_series}
		\sum_{j=0}^{\infty} (1-\mathcal{F}_{A,B}P)^j
	\end{align}
	converges strongly in the space of continuous linear operators on $\mathcal{S}(A) \oplus \mathcal{S}(B)$. We will do this by proving an operator norm estimate of the form
	\begin{align} \label{eqn:1-FP_unif}
		\|(1-\mathcal{F}_{A,B}P)(\alpha,\beta)\| \lesssim o_{\delta \to \infty}(1) \|(\alpha,\beta)\|
	\end{align}
	for a countable family of norms which generate the Fr\'echet topology on $\mathcal{S}(A) \oplus \mathcal{S}(B)$. Here it is crucial that the constant and decay rate $o_{\delta \to \infty}(1)$ are uniform for all norms in the family. Then for $\delta$ sufficiently large,
	\begin{align*}
		\|(1-\mathcal{F}_{A,B}P)(\alpha,\beta)\|
		\leq \frac{1}{2} \|(\alpha,\beta)\|
	\end{align*}
	for all norms, and the series \eqref{eqn:sub_non-end_series} converges as desired. This uniformity fails for the most obvious family of norms, so we need to be careful in which norms we use. Our next step is to describe these norms.

	Given a weight function $w \colon [1,\infty) \to \mathbf{R}^+$ and an exponent $r > 0$, define
	\begin{align*}
		\|\alpha\|_{\mathcal{S}_w^r(A)}
		= \sum_{a \in A} |\alpha(a)| w(|a|^r)
	\end{align*}
	for $\alpha \in \mathcal{S}(A)$, and define $\|\beta\|_{\mathcal{S}_w^r(B)}$ for $\beta \in \mathcal{S}(B)$ analogously. These are weighted $\ell^1$ norms. Recall $|a|,|b| \geq 1$ for $a \in A$ and $b \in B$, so $w(|a|^r)$ and $w(|b|^r)$ are defined.
	
	Since we are in the subcritical case $pq < 1$, we can choose $\tilde{p} > p$ and $\tilde{q} > q$ with $\tilde{p} \tilde{q} = 1$ (on a first reading, we suggest that the reader focus on the case $p,q < 1$ and $\tilde{p} = \tilde{q} = 1$). Fix some $s_0 > 0$ large depending on $\tilde{p},\tilde{q}$, to be chosen later. \emph{For the remainder of Section \ref{sec:sub_non-end}, all constants will be allowed to depend implicitly on $\tilde{p},\tilde{q},s_0$, in addition to $p,q,d$.} For $s > s_0$, let $w_s$ be the weight function
	\begin{align*}
		w_s(t) = \max\{t^{s_0}, c_s t^s\},
	\end{align*}
	where $c_s > 0$ is a constant which is small depending on $s$. The freedom to choose $c_s$ small allows us to ensure that $w_s(t)$ will not start growing like $t^s$ until $t$ is as large as we like. On the other hand, $w_s(t)$ at least grows like $t^{s_0}$, so
	\begin{align}\label{eqn:fast_growth}
		w_s(t^{1+\gamma}) \geq t^{s_0\gamma} w_s(t)
	\end{align}
	for any $\gamma > 0$. For $(\alpha,\beta) \in \mathcal{S}(A) \oplus \mathcal{S}(B)$, let
	\begin{align*}
		\|(\alpha,\beta)\|_{\mathcal{S}_{w_s}^{\tilde{p}^{1/2}}(A) \oplus \mathcal{S}_{w_s}^{\tilde{q}^{1/2}}(B)}
		= \|\alpha\|_{\mathcal{S}_{w_s}^{\tilde{p}^{1/2}}(A)}
		+ \|\beta\|_{\mathcal{S}_{w_s}^{\tilde{q}^{1/2}}(B)}
	\end{align*}
	(although this looks complicated, it simplifies in the model case $\tilde{p} = \tilde{q} = 1$ because the exponents $\tilde{p},\tilde{q}$ don't affect the definition of the norm).
	This family of norms on $\mathcal{S}(A) \oplus \mathcal{S}(B)$, ranging over $s > s_0$, generates the topology on $\mathcal{S}(A) \oplus \mathcal{S}(B)$, and along this family we will have uniformity in \eqref{eqn:1-FP_unif}. The following proposition is the key estimate.
	
	\begin{prop}\label{prop:phi_psi_small}
		One has the bounds
		\begin{align*}
			\|\widehat{\varphi_{a_0}}|_B\|_{\mathcal{S}_{w_s}^{\tilde{q}^{1/2}}(B)}
			\lesssim \delta^{-1} w_s(|a_0|^{\tilde{p}^{1/2}})
			\qquad \text{and} \qquad
			\|\psi_{b_0}|_A\|_{\mathcal{S}_{w_s}^{\tilde{p}^{1/2}}(A)}
			\lesssim \delta^{-1} w_s(|b_0|^{\tilde{q}^{1/2}})
		\end{align*}
		uniformly in $s > s_0$.
	\end{prop}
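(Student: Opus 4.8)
I would first reduce to the first estimate: the second is the same statement with $A,p,\tilde p,\varphi_{a_0},a_0$ and $B,q,\tilde q,\psi_{b_0},b_0$ interchanged, and since $\widehat{\psi_{b_0}}$ is a rescaled standard bump function, $\psi_{b_0}$ obeys the same kind of bound as $\widehat{\varphi_{a_0}}$ below, namely $|\psi_{b_0}(x)|\lesssim_M|b_0|^{-qd}\langle|b_0|^{-q}x\rangle^{-M}$ for every $M\in\mathbf{N}$; so the argument transfers verbatim. Fix $a_0\in A$ and write $\widehat{\varphi_{a_0}}(\xi)=|a_0|^{-pd}e^{-2\pi ia_0\cdot\xi}\hat\chi(|a_0|^{-p}\xi)$ for a standard bump function $\chi$. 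The only input I need about $\widehat{\varphi_{a_0}}$ is the pointwise bound $|\widehat{\varphi_{a_0}}(\xi)|\lesssim_M|a_0|^{-pd}\langle|a_0|^{-p}\xi\rangle^{-M}$ for all $M$, with constants uniform in $a_0$; and the only input about $B$ is the packing estimate $\#\{b\in B:|b|\le R\}\lesssim\delta^{-d}R^{d(q+1)}$ for $R\ge1$, which follows from $(q,\delta)$-separation and $|b|\ge1$ by summing the dyadic bounds $\#\{b\in B:|b|\sim2^k\}\lesssim\delta^{-d}2^{kd(q+1)}$ coming from volume packing of the disjoint balls $B_{\delta\langle b\rangle^{-q}}(b)$.

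The plan is to split $\sum_{b\in B}|\widehat{\varphi_{a_0}}(b)|\,w_s(|b|^{\tilde q^{1/2}})$ at $|b|=|a_0|^p$. On the bulk $|b|\le|a_0|^p$, use $|\widehat{\varphi_{a_0}}(b)|\lesssim|a_0|^{-pd}$ and monotonicity of $w_s$ to bound $w_s(|b|^{\tilde q^{1/2}})\le w_s(|a_0|^{p\tilde q^{1/2}})$; the packing estimate then gives a bulk contribution $\lesssim\delta^{-d}|a_0|^{pqd}\,w_s(|a_0|^{p\tilde q^{1/2}})$. The decisive point is that $\tilde p>p$ and $\tilde q^{1/2}=\tilde p^{-1/2}$ force $p\tilde q^{1/2}<\tilde p^{1/2}$, say $\tilde p^{1/2}=(1+\gamma)p\tilde q^{1/2}$ with $\gamma=(\tilde p-p)/p>0$, so \eqref{eqn:fast_growth} yields $w_s(|a_0|^{p\tilde q^{1/2}})\le|a_0|^{-cs_0}w_s(|a_0|^{\tilde p^{1/2}})$ with $c=p\tilde q^{1/2}\gamma=\tilde p^{-1/2}(\tilde p-p)>0$ depending only on $p,\tilde p$. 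Hence the bulk is $\lesssim\delta^{-d}|a_0|^{pqd-cs_0}w_s(|a_0|^{\tilde p^{1/2}})$, and since $|a_0|\ge1$, taking $s_0$ large enough that $cs_0>pqd$ (this is precisely the role of ``$s_0$ large'') makes this $\lesssim\delta^{-d}w_s(|a_0|^{\tilde p^{1/2}})\le\delta^{-1}w_s(|a_0|^{\tilde p^{1/2}})$, using $d\ge1$. On the tail $|b|>|a_0|^p$ I would work dyadically: for $|b|\sim2^j|a_0|^p$, $j\ge0$, one has $|\widehat{\varphi_{a_0}}(b)|\lesssim_M|a_0|^{-pd}2^{-jM}$, the annulus carries $\lesssim\delta^{-d}(2^j|a_0|^p)^{d(q+1)}$ points of $B$, and $w_s(|b|^{\tilde q^{1/2}})\le2^{js\tilde q^{1/2}}w_s(|a_0|^{p\tilde q^{1/2}})$ (from $w_s(\lambda t)\le\lambda^sw_s(t)$ for $\lambda\ge1$); choosing $M>d(q+1)+s\tilde q^{1/2}$ makes the $j$-sum geometric, so the tail is again $\lesssim_M\delta^{-d}|a_0|^{pqd}w_s(|a_0|^{p\tilde q^{1/2}})$, which the bulk computation then controls by $\delta^{-1}w_s(|a_0|^{\tilde p^{1/2}})$.

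The step I expect to be the main obstacle is that in the tail the decay exponent $M$, hence the implied constant, depends on $s$, so this is not yet uniform in $s>s_0$ as the proposition demands. To remedy this I would use the freedom to take $c_s$ as small as we like, writing $w_s(t)\sim t^{s_0}+c_st^s$. The $t^{s_0}$-part of the weight needs only $M>d(q+1)+s_0\tilde q^{1/2}$, a condition independent of $s$, and is handled exactly as above. The $c_st^s$-part activates only beyond $T_s=c_s^{-1/(s-s_0)}$, which we may take arbitrarily large; so whenever the $c_st^s$-part of $w_s(|b|^{\tilde q^{1/2}})$---or of $w_s(|a_0|^{\tilde p^{1/2}})$ on the right---matters, the relevant scales are forced to be $\gtrsim T_s$. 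Running this through a short case analysis, the factors of $c_s$ on the two sides cancel and one is left with a surplus negative power $T_s^{-\varepsilon(s)}$, where $\varepsilon(s)\gtrsim s$ is produced by the inequality $cs_0>pqd$ from the bulk (now with $s$ in place of $s_0$) together with the gap $\tilde p^{1/2}-p\tilde q^{1/2}>0$; since $T_s^{-\varepsilon(s)}=c_s^{\varepsilon(s)/(s-s_0)}$ with $\varepsilon(s)/(s-s_0)$ bounded below by a positive constant for all $s>s_0$, choosing $c_s$ sufficiently small---after $s$, and hence $M(s)$ and the constant $C_{M(s)}$, have been fixed---makes this surplus beat the $s$-dependent constant. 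This gives the tail bound uniformly in $s$; adding bulk and tail proves the first estimate, and the second follows by the interchange described above, with $q<\tilde q$ (equivalently $q\tilde p^{1/2}<\tilde q^{1/2}$) replacing $\tilde p>p$.
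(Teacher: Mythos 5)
Your overall skeleton --- splitting at an intermediate radius, exploiting the gap $p\tilde{q}^{1/2}<\tilde{p}^{1/2}$ through the fast-growth property \eqref{eqn:fast_growth}, and restoring uniformity in $s$ by treating the $t^{s_0}$ and $c_st^s$ branches of $w_s$ separately and choosing $c_s$ last --- is in substance the paper's argument. But one step fails as written, and it sits exactly where the factor $\delta^{-1}$ has to come from. The packing bound $\#\{b\in B:|b|\le R\}\lesssim\delta^{-d}R^{d(q+1)}$ is false for $R\ll\delta^{1/(q+1)}$ whenever $B$ contains a point of magnitude $\le R$: disjointness of the balls $B_{\delta\langle b\rangle^{-q}}(b)$ only gives $\#\lesssim 1+\delta^{-d}R^{d(q+1)}$, and the additive $1$ cannot be dropped. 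This matters because in your accounting the sole source of $\delta^{-1}$ is the $\delta^{-d}$ in that count (via $\delta^{-d}\le\delta^{-1}$), and you never invoke the standing normalization of Section \ref{subsec:sub_non-end_surjective} that all points of $A,B$ have magnitude $\gtrsim\delta^{\varpi}$. That normalization is not optional: without it the proposition is simply false --- for $|a_0|\sim|b|\sim 1$ and an admissible bump with $\widehat{\varphi_{a_0}}$ bounded below on the unit scale, the single term $|\widehat{\varphi_{a_0}}(b)|\,w_s(|b|^{\tilde{q}^{1/2}})\gtrsim 1$ already exceeds $\delta^{-1}w_s(|a_0|^{\tilde{p}^{1/2}})\sim\delta^{-1}$. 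Moreover, since $\varpi<\tfrac{1}{q+1}$, the problematic regime survives the normalization (points $b$ with $|b|\sim\delta^{\varpi}$ and $R\sim\delta^{\varpi p}$), so the extra $O(1)$ in the count genuinely enters your bulk and tail. The repair is the paper's mechanism: the $O(1)$ term contributes at most $|a_0|^{-pd}\,w_s(|a_0|^{p\tilde{q}^{1/2}})\le|a_0|^{-cs_0}w_s(|a_0|^{\tilde{p}^{1/2}})$, and one converts $|a_0|^{-cs_0}$ into $\delta^{-1}$ using $|a_0|\gtrsim\delta^{\varpi}$ and $s_0\ge 1/(c\varpi)$; this is precisely how the paper extracts $\delta^{-1}$ (surplus negative powers of $|a_0|$ combined with $|a_0|\gtrsim\delta^{\varpi}$, not $\delta^{-d}$).

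A secondary looseness: the claim that ``the factors of $c_s$ on the two sides cancel'' is only literally true when both $|b|^{\tilde{q}^{1/2}}\ge T_s$ and $|a_0|^{\tilde{p}^{1/2}}\ge T_s$. In the mixed case $|b|^{\tilde{q}^{1/2}}\ge T_s>|a_0|^{\tilde{p}^{1/2}}$ the right-hand weight is $|a_0|^{\tilde{p}^{1/2}s_0}$, there is no $c_s$ to cancel, and $c_s|a_0|^{p\tilde{q}^{1/2}s}\le T_s^{-(s-s_0)+(p/\tilde{p})s}$ has positive exponent for $s$ near $s_0$; to close this case you must keep a reserve in $M(s)$ beyond $d(q+1)+s\tilde{q}^{1/2}$ and use $|a_0|^{-p}|b|\gtrsim T_s^{\tilde{p}^{1/2}-p\tilde{q}^{1/2}}$, so the surplus comes from Schwartz decay rather than cancellation of $c_s$. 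Alternatively, do what the paper does in its term \RomanNumeralCaps{2}: only aim for the tail to be $\lesssim|a_0|^{-1/\varpi}\lesssim\delta^{-1}$ (using $w_s\ge1$ on the right), pairing the $c_st^s$ branch with an $s$-dependent decay exponent whose constant is swallowed by $c_s$; this avoids your case analysis entirely. With these two corrections your proof closes and is essentially the paper's.
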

	
	The factor $\delta^{-1}$ is not sharp, but all we need is that it goes to zero at a rate independent of $s$ as $\delta \to \infty$.
	
	\begin{proof}
		We only prove the first inequality, as the second is completely analogous. Write
		\begin{align*}
			\varphi_{a_0}(x) = \varphi(|a_0|^p(x-a_0))
		\end{align*}
		with $\varphi$ a standard bump function. Then
		\begin{align*}
			\|\widehat{\varphi_{a_0}}|_B\|_{\mathcal{S}_{w_s}^{\tilde{q}^{1/2}}}
			= \sum_{b \in B} |\widehat{\varphi_{a_0}}(b)| w_s(|b|^{\tilde{q}^{1/2}})
			= |a_0|^{-pd} \sum_{b \in B} |\widehat{\varphi}(|a_0|^{-p}b)| w_s(|b|^{\tilde{q}^{1/2}}).
		\end{align*}
		Let $\varepsilon > 0$ be such that $\tilde{p} = p+2\varepsilon$. We drop the factor of $|a_0|^{-pd} \leq 1$, and split the sum based on whether $|b|$ is smaller or bigger than $|a_0|^{p+\varepsilon}$. When $|b|$ is bigger, the Schwartz decay of $\widehat{\varphi}$ kicks in. Therefore
		\begin{align*}
			\|\widehat{\varphi_{a_0}}|_B\|_{\mathcal{S}_{w_s}^{\tilde{q}^{1/2}}}
			\lesssim \underbrace{\sum_{|b| \leq |a_0|^{p+\varepsilon}} w_s(|b|^{\tilde{q}^{1/2}})}_{\mbox{\RomanNumeralCaps{1}}}
			+ \underbrace{\sum_{|b| > |a_0|^{p+\varepsilon}} \min\{(|a_0|^{-p} |b|)^{-s_0'}, O_s(|a_0|^{-p} |b|)^{-s'}\} w_s(|b|^{\tilde{q}^{1/2}})}_{\mbox{\RomanNumeralCaps{2}}},
		\end{align*}
		where we choose $s_0',s'$ large depending on $s_0,s$, respectively.
		
		For $|b| \leq |a_0|^{p+\varepsilon}$,
		\begin{align*}
			|b|^{\tilde{q}^{1/2}}
			\leq |a_0|^{(p+\varepsilon)\tilde{q}^{1/2}}
			\qquad \text{and} \qquad
			(p+\varepsilon) \tilde{q}^{1/2}
			< \tilde{p}\tilde{q}^{1/2}
			= \tilde{p}^{1/2},
		\end{align*}
		so by \eqref{eqn:fast_growth} and the fact that there are only polynomially many $b$'s in the sum,
		\begin{align*}
			\mbox{\RomanNumeralCaps{1}}
			\lesssim |a_0|^{-(s_1 - O(1))} w_s(|a_0|^{\tilde{p}^{1/2}})
			\qquad \text{for some} \qquad
			s_1 \gtrsim s_0.
		\end{align*}
		Since $s_0$ is large, the exponent $s_1 - O(1)$ can be taken to be at least $1/\varpi$. Combined with $|a_0| \gtrsim \delta^{\varpi}$ (which we arranged at the beginning of this section), we obtain
		\begin{align*}
			\mbox{\RomanNumeralCaps{1}}
			\lesssim \delta^{-1} w_s(|a_0|^{\tilde{p}^{1/2}}).
		\end{align*}
		
		The summand in \RomanNumeralCaps{2} is a product of a min and a max (because $w_s$ is a max of two quantities). Estimating this product in two different ways based on which argument of the max is bigger,
		\begin{align*}
			\mbox{\RomanNumeralCaps{2}}
			\lesssim \sum_{|b| > |a_0|^{p+\varepsilon}} (|a_0|^{-p}|b|)^{-s_0'} |b|^{\tilde{q}^{1/2}s_0} + \sum_{|b| > |a_0|^{p+\varepsilon}} O_s(|a_0|^{-p} |b|)^{-s'} c_s |b|^{\tilde{q}^{1/2}s}.
		\end{align*}
		Taking $s_0',s'$ large enough, and taking $c_s$ small enough to cancel out the implied constant, a dyadic decomposition in $|b|$ easily yields
		\begin{align*}
			\mbox{\RomanNumeralCaps{2}}
			\lesssim |a_0|^{-1/\varpi}
			\lesssim \delta^{-1}
			\leq \delta^{-1} w_s(|a_0|^{\tilde{p}^{1/2}}).
		\end{align*}
		Hence
		\begin{align*}
			\|\widehat{\varphi_{a_0}}|_B\|_{\mathcal{S}_{w_s}^{\tilde{q}^{1/2}}}
			&\lesssim \mbox{\RomanNumeralCaps{1}} + \mbox{\RomanNumeralCaps{2}}
			\lesssim \delta^{-1} w_s(|a_0|^{\tilde{p}^{1/2}}).
			\qedhere
		\end{align*}
	\end{proof}
	
	Now fix $\delta$ to be at least $2$ times the implied constants in Proposition \ref{prop:phi_psi_small}. Then by \eqref{eqn:phi_psi_indicator} and the proposition,
	\begin{align*}
		\|(1-\mathcal{F}_{A,B} P) (1_{\cdot = a_0},0)\|_{\mathcal{S}_{w_s}^{\tilde{p}^{1/2}}(A) \oplus \mathcal{S}_{w_s}^{\tilde{q}^{1/2}}(B)}
		\leq \frac{1}{2} \|(1_{\cdot = a_0}, 0)\|_{\mathcal{S}_{w_s}^{\tilde{p}^{1/2}}(A) \oplus \mathcal{S}_{w_s}^{\tilde{q}^{1/2}}(B)}
	\end{align*}
	and similarly
	\begin{align*}
		\|(1 - \mathcal{F}_{A,B} P) (0,1_{\cdot = b_0})\|_{\mathcal{S}_{w_s}^{\tilde{p}^{1/2}}(A) \oplus \mathcal{S}_{w_s}^{\tilde{q}^{1/2}}(B)}
		\leq \frac{1}{2} \|(0,1_{\cdot=b_0})\|_{\mathcal{S}_{w_s}^{\tilde{p}^{1/2}}(A) \oplus \mathcal{S}_{w_s}^{\tilde{q}^{1/2}}(B)}.
	\end{align*}
	Thus by the definition of the norm and the triangle inequality,
	\begin{align*}
		\|(1 - \mathcal{F}_{A,B} P) (\alpha,\beta)\|_{\mathcal{S}_{w_s}^{\tilde{p}^{1/2}}(A) \oplus \mathcal{S}_{w_s}^{\tilde{q}^{1/2}}(B)}
		\leq \frac{1}{2} \|(\alpha,\beta)\|_{\mathcal{S}_{w_s}^{\tilde{p}^{1/2}}(A) \oplus \mathcal{S}_{w_s}^{\tilde{q}^{1/2}}(B)}
	\end{align*}
	for all $(\alpha,\beta) \in \mathcal{S}(A) \oplus \mathcal{S}(B)$. Hence the power series
	\begin{align*}
		\sum_{j=0}^{\infty} (1-\mathcal{F}_{A,B}P)^j
	\end{align*}
	converges in operator norm with respect to the norm associated to $s$, for all $s > s_0$. In particular, it converges strongly, and defines an operator on $\mathcal{S}(A) \oplus \mathcal{S}(B)$ which inverts $\mathcal{F}_{A,B}P$. It follows that $\mathcal{F}_{A,B}$ is surjective, as desired.
	
	\subsection{Completing the proof of Theorem \ref{thm:sub_non-end}} \label{subsec:completing_sub_non-end} Now suppose $\delta$ is arbitrary. We wish to show that $\Coker \mathcal{F}_{A,B}$ has dimension $O_{\delta}(1)$, and $\Ker \mathcal{F}_{A,B}$ is infinite-dimensional.
	
	To prove the former, remove $O_{\delta}(1)$ points from $A,B$ to form subsets $A' \subseteq A$ and $B' \subseteq B$ which are $(p',\delta')$-separated and $(q',\delta')$-separated, respectively, for fixed $p' > p$ and $q' > q$ with $p'q' < 1$, and for $\delta'$ sufficiently large depending on $p',q'$. Then $\mathcal{F}_{A',B'}$ is surjective by the argument above, so it follows immediately that $\dim \Coker \mathcal{F}_{A,B} = O_{\delta}(1)$.
	
	To prove the latter, add infinitely many points to $A,B$ to form supersets $A' \supseteq A$ and $B' \supseteq B$ which are $(p,\delta')$-separated and $(q,\delta')$-separated, respectively, for some $\delta' \gtrsim \delta$. The subspace of $\mathcal{S}(A') \oplus \mathcal{S}(B')$ consisting of sequences which vanish on $A$ and $B$ is infinite-dimensional. Since $\Coker \mathcal{F}_{A',B'}$ is finite-dimensional, this subspace must meet $\im \mathcal{F}_{\mathcal{A}',\mathcal{B}'}$ in an infinite-dimensional subspace $\mathcal{E}$. The preimage of $\mathcal{E}$ under $\mathcal{F}_{A',B'}$ is an infinite-dimensional subspace of $\Ker \mathcal{F}_{A,B}$.
	
	\section{Sparse critical case: proof of Theorem \ref{thm:sub_end}} \label{sec:sub_end}
	
	
	We follow the outline in Section \ref{subsubsec:sub_end}. By the same idea as in the last paragraph of Section \ref{subsec:completing_sub_non-end}, it suffices to show that the image of $\mathcal{F}_{A,B}$ contains all finitely supported sequences.
	
	As in Section \ref{subsec:sub_non-end_surjective}, we may assume all points in $A,B$ have magnitude $\gtrsim \delta^{\varpi}$ (with $\varpi \gtrsim 1$), in particular $\geq 1$ because $\delta$ is large.
	
	Let $\rho \in \mathcal{S}(\mathbf{R}^d)$ be such that $\hat{\rho}$ is a standard bump function and $\rho(0) = 1$. Furthermore, take $\hat{\rho}$ to be Gevrey, so $\rho$ satisfies the quasi-exponential decay condition
	\begin{align}\label{eqn:quasi-exp_decay}
		|\rho(x)| \lesssim e^{-|x|^{0.9}}.
	\end{align}
	Set
	\begin{align}\label{eqn:phi_psi_Gevrey}
		\varphi_{a_0} = \rho(\delta^{-1/2} |a_0|^p (\cdot-a_0))
		\qquad \text{and} \qquad
		\psi_{b_0} = \rho(\delta^{-1/2} |b_0|^q (\cdot - b_0))^{\vee}.
	\end{align}
	Define $P \colon \mathcal{S}(A) \oplus \mathcal{S}(B) \to \mathcal{S}(\mathbf{R}^d)$ by
	\begin{align*}
		P(\alpha,\beta)
		= \sum_{a_0 \in A} \alpha(a_0) \varphi_{a_0} + \sum_{b_0 \in B} \beta(b_0) \psi_{b_0}
	\end{align*}
	as in the subcritical case.
	
	Let $w \colon [1,\infty) \to \mathbf{R}^+$ be the weight function
	\begin{align*}
		w(t) = e^{\log^3 t}.
	\end{align*}
	There's a lot of freedom in how to choose $w$ --- it just has to grow faster than $e^{C\log^2 t}$ and slower than $e^{t^c}$ for $C$ a large constant and $c$ a small constant. In analogy with \eqref{eqn:fast_growth}, we will use that $w$ grows fast enough that
	\begin{align}\label{eqn:super_fast_growth}
		w(2t) \gtrsim_j t^j w(t)
	\end{align}
	for any $j$. On the other hand, $w$ grows much slower than the decay rate \eqref{eqn:quasi-exp_decay} of $\rho$.
	
	We will show that all sequences in $\mathcal{S}_w^{p^{1/2}}(A) \oplus \mathcal{S}_w^{q^{1/2}}(B)$ lie in the image of $\mathcal{F}_{A,B}$, where $\mathcal{S}_w^{p^{1/2}}(A) \oplus \mathcal{S}_w^{q^{1/2}}(B)$ is the space of $(\alpha,\beta) \in \mathcal{S}(A) \oplus \mathcal{S}(B)$ for which the norm
	\begin{align}\label{eqn:quasi-exp_norm}
		\|(\alpha,\beta)\|_{\mathcal{S}_w^{p^{1/2}}(A) \oplus \mathcal{S}_w^{q^{1/2}}(B)}
		= \sum_{a \in A} |\alpha(a)| w(|a|^{p^{1/2}}) + \sum_{b \in B} |\beta(b)| w(|b|^{q^{1/2}})
	\end{align}
	is finite. This space is what we called $\mathcal{T}(A,B)$ in Section \ref{subsubsec:sub_end}. Of course, this contains all finitely supported sequences $(\alpha,\beta)$.
	
	To show $\mathcal{S}_w^{p^{1/2}}(A) \oplus \mathcal{S}_w^{q^{1/2}}(B) \subseteq \im \mathcal{F}_{A,B}$, it suffices as in Section \ref{subsec:sub_non-end_surjective} to establish the operator norm bound
	\begin{align*}
		\|(1 - \mathcal{F}_{A,B}P) (\alpha,\beta)\|_{\mathcal{S}_w^{p^{1/2}}(A) \oplus \mathcal{S}_w^{q^{1/2}}(B)}
		\leq \frac{1}{2} \|(\alpha,\beta)\|_{\mathcal{S}_w^{p^{1/2}}(A) \oplus \mathcal{S}_w^{q^{1/2}}(B)}.
	\end{align*}
	By the triangle inequality and the definition of the norm \eqref{eqn:quasi-exp_norm}, this bound follows from the estimates
	\begin{gather}
		\|\varphi_{a_0}|_A - 1_{\cdot = a_0}\|_{\mathcal{S}_w^{p^{1/2}}}
		\leq \frac{1}{4} w(|a_0|^{p^{1/2}}),
		\label{eqn:phi_phys_approx}
		\\
		\|\widehat{\varphi_{a_0}}|_B\|_{\mathcal{S}_w^{q^{1/2}}}
		\leq \frac{1}{4} w(|a_0|^{p^{1/2}}),
		\label{eqn:phi_fourier_approx}
		\\
		\|\psi_{b_0}|_A\|_{\mathcal{S}_w^{p^{1/2}}}
		\leq \frac{1}{4} w(|b_0|^{q^{1/2}}),
		\label{eqn:psi_phys_approx}
		\\
		\|\widehat{\psi_{b_0}}|_B - 1_{\cdot = b_0}\|_{\mathcal{S}_w^{q^{1/2}}}
		\leq \frac{1}{4} w(|b_0|^{q^{1/2}}),
		\label{eqn:psi_fourier_approx}
	\end{gather}
	for all $a_0 \in A$ and $b_0 \in B$. So we have reduced the proof of Theorem \ref{thm:sub_end} to the four estimates \eqref{eqn:phi_phys_approx}, \eqref{eqn:phi_fourier_approx}, \eqref{eqn:psi_phys_approx}, and \eqref{eqn:psi_fourier_approx}.
	
	We begin with \eqref{eqn:phi_phys_approx}. By definition,
	\begin{align*}
		\|\varphi_{a_0}|_A - 1_{\cdot = a_0}\|_{\mathcal{S}_w^{p^{1/2}}}
		= \sum_{a \neq a_0} |\rho(\delta^{-1/2} |a_0|^p (a-a_0))| w(|a|^{p^{1/2}}).
	\end{align*}
	The fast decay \eqref{eqn:quasi-exp_decay} of $\rho$ relative to the much slower growth of $w$, combined with the $(p,\delta)$-separation of $A$, gives us that this sum is dominated by the terms where $a$ is as close as possible to $a_0$, i.e., where $|a-a_0| \sim \delta |a_0|^{-p}$. Therefore
	\begin{align*}
		\|\varphi_{a_0}|_A - 1_{\cdot = a_0}\|_{\mathcal{S}_w^{p^{1/2}}}
		\lesssim e^{-(\delta^{1/2})^{0.9}} w((|a_0|+O(\delta|a_0|^{-p}))^{p^{1/2}}).
	\end{align*}
	If $|a_0|$ is bigger than a large power of $\delta$, then
	\begin{align*}
		w((|a_0|+O(\delta|a_0|^{-p}))^{p^{1/2}})
		\leq w((|a_0|+1)^{p^{1/2}})
		\sim w(|a_0|^{p^{1/2}}),
	\end{align*}
	while if $|a_0| \lesssim \delta^{O(1)}$, then
	\begin{align*}
		w((|a_0|+O(\delta|a_0|^{-p}))^{p^{1/2}})
		\leq w(\delta^{O(1)})
		\leq w(\delta^{O(1)}) w(|a_0|^{p^{1/2}}).
	\end{align*}
	In both cases, it follows that
	\begin{align*}
		\|\varphi_{a_0}|_A - 1_{\cdot = a_0}\|_{\mathcal{S}_w^{p^{1/2}}}
		\lesssim e^{-(\delta^{1/2})^{0.8}} w(|a_0|^{p^{1/2}}).
	\end{align*}
	Taking $\delta$ sufficiently large (independent of $a_0$), we obtain \eqref{eqn:phi_phys_approx}. The same argument proves \eqref{eqn:psi_fourier_approx}.
	
	For \eqref{eqn:phi_fourier_approx}, write
	\begin{align*}
		|\widehat{\varphi_{a_0}}(\xi)|
		= \delta^{d/2} |a_0|^{-pd} |\hat{\rho}(\delta^{1/2} |a_0|^{-p} \xi)|.
	\end{align*}
	Since $\hat{\rho}$ has compact support, $\widehat{\varphi_{a_0}}$ is supported on $\{|\xi| \lesssim \delta^{-1/2} |a_0|^p\}$. Therefore
	\begin{align*}
		\|\widehat{\varphi_{a_0}}|_B\|_{\mathcal{S}_w^{q^{1/2}}}
		= \sum_{b \in B} |\widehat{\varphi_{a_0}}(b)| w(|b|^{q^{1/2}})
		\lesssim |a_0|^{O(1)} w(O(\delta^{-1/2}|a_0|^p)^{q^{1/2}})
		= |a_0|^{O(1)} w(O(\delta^{-q^{1/2}/2} |a_0|^{p^{1/2}})),
	\end{align*}
	where in the last equality we have used that $pq = 1$. Taking $\delta$ sufficiently large, we get
	\begin{align*}
		\|\widehat{\varphi_{a_0}}|_B\|_{\mathcal{S}_w^{q^{1/2}}}
		\lesssim |a_0|^{O(1)} w\Big(\frac{1}{2} |a_0|^{p^{1/2}}\Big).
	\end{align*}
	Using \eqref{eqn:super_fast_growth}, we can control this by
	\begin{align*}
		\|\widehat{\varphi_{a_0}}|_B\|_{\mathcal{S}_w^{q^{1/2}}}
		\lesssim |a_0|^{-1} w(|a_0|^{p^{1/2}}).
	\end{align*}
	Since $|a_0| \gtrsim \delta^{\varpi}$, we obtain \eqref{eqn:phi_fourier_approx} when $\delta$ is large. Once again, the same argument gives the analogous estimate \eqref{eqn:psi_phys_approx} for $\psi$.
	
	We have now established \eqref{eqn:phi_phys_approx}, \eqref{eqn:phi_fourier_approx}, \eqref{eqn:psi_phys_approx}, \eqref{eqn:psi_fourier_approx}, so the proof of Theorem \ref{thm:sub_end} is complete.
	
	\section{Supercritical case: proof of Theorem \ref{thm:super_non-end}} \label{sec:super_non-end}
	
	Sections \ref{subsec:sampling} and \ref{subsec:Gabor} prepare us for Section \ref{subsec:injectivity_non-end}, where we implement the strategy in Section \ref{subsubsec:super_non-end}. The proof is finished off in Section \ref{subsec:proof_super_non-end}, which is quite similar to Section \ref{subsec:completing_sub_non-end} in the subcritical case.

	\subsection{Integration by sampling} \label{subsec:sampling}
	
	We are concerned with the following natural question: how best to estimate the integral of a function, knowing only its values on a given discrete sample set? Lemmas \ref{lem:local_mu} and \ref{lem:full_mu} provide partial answers.
	
	In this section alone, we include in our notation the dependence of constants on $p,d$.
	
	\begin{lem}\label{lem:local_mu}
		Fix $k \in \mathbf{Z}^+$, and let $\varepsilon > 0$ be sufficiently small depending on $k$ and $d$. Suppose $E$ is an $\varepsilon r$-dense subset of a cube $Q \subseteq \mathbf{R}^d$ of sidelength $r$. Then there is a signed measure $\mu$ supported on $E$ of total variation $\|\mu\| \lesssim_{k,d} r^d$, such that
		\begin{align}\label{eqn:local_approx}
			\int_Q \varphi = \int \varphi \d\mu + O_{k,d}(r^{d+k} \|\nabla^k \varphi\|_{\infty})
		\end{align}
		for all $\varphi \in C^k(Q)$.
	\end{lem}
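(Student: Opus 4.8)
The plan is to reduce to the case $r=1$ by scaling, build $\mu$ as a fixed-size signed combination of point masses on $E$ that integrates all polynomials of degree $\le k-1$ exactly against Lebesgue measure on $Q$, and then control the error on a general $\varphi$ by Taylor's theorem. For the rescaling, put $\tilde\varphi(y)=\varphi(x_0+ry)$ where $x_0$ is the center of $Q$; this turns $Q$ into the unit cube $Q_1$, turns $E$ into an $\varepsilon$-dense subset $\tilde E\subseteq Q_1$, and satisfies $\int_{Q_1}\tilde\varphi=r^{-d}\int_Q\varphi$ and $\|\nabla^k\tilde\varphi\|_\infty=r^k\|\nabla^k\varphi\|_\infty$. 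Hence it suffices to produce, for an $\varepsilon$-dense $E$ in the unit cube $Q$, a signed measure $\mu$ on $E$ with $\|\mu\|\lesssim_{k,d}1$ and $\int_Q\varphi=\int\varphi\,\mathrm{d}\mu+O_{k,d}(\|\nabla^k\varphi\|_\infty)$; pushing $\mu$ forward under $y\mapsto x_0+ry$ and multiplying by $r^d$ then recovers the general statement, with $\|\mu\|\lesssim_{k,d}r^d$ and error $O_{k,d}(r^{d+k}\|\nabla^k\varphi\|_\infty)$ as required.

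The heart of the matter is constructing $\mu$. Let $\mathcal P$ be the space of polynomials of degree $\le k-1$ on $\mathbf R^d$, of dimension $N=N(k,d)$, with a fixed monomial basis $(m_\alpha)$. I would first fix, once and for all depending only on $k$ and $d$, a unisolvent configuration of nodes $z_1,\dots,z_N$ in the interior of $Q$, meaning that evaluation $\mathcal P\to\mathbf R^N$, $P\mapsto(P(z_i))_i$, is a linear isomorphism; such configurations are generic. Then $P\mapsto\int_Q P$ has a unique representation $\sum_i c_iP(z_i)$, and since the $z_i$ depend only on $k,d$, automatically $\sum_i|c_i|\lesssim_{k,d}1$. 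Now invoke the density of $E$: for each $i$ choose $x_i\in E$ with $|x_i-z_i|<\varepsilon$. The matrix $A(x)$ with entries $m_\alpha(x_i)$ depends polynomially on the node positions and equals an invertible matrix at $x=z$, so for $\varepsilon$ small enough depending on $k,d$ it remains invertible with $\|A(x)^{-1}\|\le 2\|A(z)^{-1}\|$ (in particular the $x_i$ are distinct). Solving $A(x)\tilde c=\big(\int_Q m_\alpha\big)_\alpha$ then gives weights with $\sum_i|\tilde c_i|\lesssim_{k,d}1$ and $\sum_i\tilde c_iP(x_i)=\int_Q P$ for all $P\in\mathcal P$. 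Setting $\mu=\sum_i\tilde c_i\,\delta_{x_i}$, we obtain a signed measure supported on $E$ with $\|\mu\|\lesssim_{k,d}1$, and the error functional $L\varphi:=\int_Q\varphi-\int\varphi\,\mathrm{d}\mu$ annihilates $\mathcal P$.

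To finish, given $\varphi\in C^k(Q)$, let $T$ be its degree-$(k-1)$ Taylor polynomial at the center of $Q$; the integral form of the Taylor remainder together with $\mathrm{diam}(Q)\lesssim_d1$ gives $\|\varphi-T\|_{\infty,Q}\lesssim_{k,d}\|\nabla^k\varphi\|_\infty$. Since $LT=0$ and $\|\mu\|\lesssim_{k,d}1$, we get $|L\varphi|=|L(\varphi-T)|\le(|Q|+\|\mu\|)\,\|\varphi-T\|_{\infty,Q}\lesssim_{k,d}\|\nabla^k\varphi\|_\infty$, which is \eqref{eqn:local_approx} in the case $r=1$; undoing the rescaling completes the proof. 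The one place where the hypothesis that $\varepsilon$ is small depending on $k,d$ is genuinely needed — and the step I expect to be the main obstacle — is the perturbation argument: moving the fixed reference nodes $z_i$ into the $\varepsilon$-net $E$ must preserve \emph{exact} integration of $\mathcal P_{k-1}$ while keeping the total variation of $\mu$ bounded, which comes down to the stability of invertibility and conditioning of the Vandermonde-type matrix $A(x)$ near $A(z)$.
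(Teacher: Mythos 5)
Your proposal is correct and follows essentially the same route as the paper: reduce to the unit cube by scaling, build $\mu$ on points of $E$ near a fixed unisolvent configuration so that it integrates all polynomials of degree $\le k-1$ exactly with bounded total variation, and conclude via the degree-$(k-1)$ Taylor approximation. The only cosmetic difference is that you phrase the stability of the weights under perturbing the nodes into $E$ via invertibility and conditioning of the Vandermonde-type matrix, whereas the paper encodes the same fact as the norm-recovery bound \eqref{eqn:P_recovery}.
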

	
	\begin{proof}
		By translation invariance and rescaling, we may assume $Q = [0,1]^d$ and $r = 1$. We approximate $\varphi$ by its $(k-1)$-jet at $0$:
		\begin{align}\label{eqn:taylor}
			\varphi(x) = J_0^{(k-1)}\varphi(x) + O_{k,d}(\|\nabla^k\varphi\|_{\infty}),
			\qquad \text{where} \qquad
			J_0^{(k-1)}\varphi(x)
			= \sum_{|\alpha| \leq k-1} \frac{1}{\alpha !} \partial^{\alpha} \varphi(0) x^{\alpha}.
		\end{align}
		This jet is an element of $\mathcal{P}$, the vector space of polynomials of degree at most $k-1$ in $d$ variables. Denote $N = \dim \mathcal{P} \lesssim_{k,d} 1$. Fix $N$ points in $[0,1]^d$ such that only the zero polynomial in $\mathcal{P}$ vanishes at all $N$ points. Assuming $\varepsilon$ is sufficiently small depending on $k$ and $d$, we can approximate these $N$ points by $x_1,\dots,x_N \in E$, with the approximation good enough that the bound
		\begin{align}\label{eqn:P_recovery}
			\|P\|_{\mathcal{P}} \lesssim_{k,d} \sum_{j=1}^N |P(x_j)|
		\end{align}
		holds for all $P \in \mathcal{P}$ (here $\|\cdot\|_{\mathcal{P}}$ is some fixed choice of norm on $\mathcal{P}$). Let $\mu$ be the unique measure supported on $\{x_1,\dots,x_N\}$ such that
		\begin{align}\label{eqn:mu_def}
			\int_{[0,1]^d} P = \int P \d\mu
		\end{align}
		for all $P \in \mathcal{P}$. It follows from \eqref{eqn:P_recovery} that $\mu$ has total variation $O_{k,d}(1)$. Integrating the Taylor approximation \eqref{eqn:taylor} and applying \eqref{eqn:mu_def}, we conclude that
		\begin{gather*}
			\int_{[0,1]^d} \varphi
			= \int \varphi \d\mu + O_{k,d}(\|\nabla^k\varphi\|_{\infty}).
			\qedhere
		\end{gather*}
	\end{proof}
	
	Let $M_1^{\infty}$ be the ``unit scale $L^{\infty}$ maximal operator"
	\begin{align*}
		M_1^{\infty}f(x)
		= \|f\|_{L^{\infty}(B_1(x))}.
	\end{align*}
	
	\begin{lem}\label{lem:full_mu}
		Fix $k \in \mathbf{Z}^+$. Let $A \subseteq \mathbf{R}^d$ be $(p,\delta)$-dense with $p \geq 0$ and $\delta$ small depending on $k,p,d$. Then there is a signed measure $\mu$ supported on $A$, with total variation $O_{k,p,d}(1)$ on each unit ball in $\mathbf{R}^d$, such that
		\begin{align}\label{eqn:full_approx}
			\int_{\mathbf{R}^d} f = \int f \d\mu + O_{k,p,d}\Big(\delta^k \int_{\mathbf{R}^d} M_1^{\infty}[\nabla^k f](x) \langle x \rangle^{-pk} \d x \Big)
		\end{align}
		for all Schwartz functions $f \in \mathcal{S}(\mathbf{R}^d)$.
	\end{lem}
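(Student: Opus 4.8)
The plan is to deduce the global statement from the local one (Lemma~\ref{lem:local_mu}) via a Whitney-type decomposition of $\mathbf{R}^d$ into cubes whose sidelengths track the local density of $A$. First I record a preliminary observation: since $A$ is $(p,\delta)$-dense and $\delta$ is small, every $y\in\mathbf{R}^d$ satisfies $\dist(y,A)\lesssim\delta\langle y\rangle^{-p}$, because if $|y-z|<\delta\langle z\rangle^{-p}\le\delta$ then $\langle z\rangle\sim\langle y\rangle$. The scale function $g(x)=\delta\langle x\rangle^{-p}$ is slowly varying at its own scale --- $g(y)\sim g(x)$ whenever $|y-x|\lesssim g(x)$, again because $\delta$ is small --- so there is a standard decomposition of $\mathbf{R}^d$ into essentially disjoint cubes $Q$ with sidelength $r_Q\sim\delta R_Q^{-p}$ and $\langle x\rangle\sim R_Q$ for all $x\in Q$; concretely one cuts $\mathbf{R}^d$ into dyadic annuli $\{|x|\sim 2^j\}$ and tiles each by cubes of sidelength $\sim\delta 2^{-jp}$, which is legitimate since $\delta 2^{-jp}\ll 2^j$.

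The crucial point is that $A\cap Q$ is $\varepsilon r_Q$-dense in $Q$, where $\varepsilon$ is the small constant (depending on $k,d$) from Lemma~\ref{lem:local_mu}. Given $y\in Q$, I push $y$ toward the center of $Q$ by a distance $\sim\delta R_Q^{-p}$ to obtain $y'\in Q$ with $\dist(y',\partial Q)\gtrsim\delta R_Q^{-p}$: moving toward the center weakly increases the distance to every face, and by enough here because $\diam Q\sim r_Q$ is comparable to the push. Then the nearest point $z\in A$ to $y'$ has $|z-y'|\lesssim\delta R_Q^{-p}$, so $z\in Q$, and $|z-y|\lesssim\delta R_Q^{-p}$. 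Choosing the implied constant in $r_Q\sim\delta R_Q^{-p}$ large enough (depending on $k,d$) makes $|z-y|\le\varepsilon r_Q$, while the side condition $r_Q\le R_Q$ of the setup holds because $\delta$ is small. Applying Lemma~\ref{lem:local_mu} on each $Q$ yields a signed measure $\mu_Q$ on $A\cap Q$ with $\|\mu_Q\|\lesssim r_Q^d$ and $\int_Q\varphi=\int\varphi\,d\mu_Q+O(r_Q^{d+k}\|\nabla^k\varphi\|_{L^\infty(Q)})$; I set $\mu=\sum_Q\mu_Q$. The $\mu_Q$ have disjoint supports, so on a unit ball $B_1(x_0)$ --- met by $\lesssim r_Q^{-d}$ of the cubes, all with $r_Q\sim\delta\langle x_0\rangle^{-p}\le\delta\lesssim 1$ --- the total variation of $\mu$ is $\lesssim r_Q^{-d}\cdot r_Q^d\sim 1$, uniformly in $x_0$ and $\delta$.

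It remains to sum over $Q$. As the cubes are essentially disjoint and cover $\mathbf{R}^d$ and $f$ is Schwartz, $\int_{\mathbf{R}^d}f=\sum_Q\int_Q f$ and $\int f\,d\mu=\sum_Q\int f\,d\mu_Q$, both absolutely convergent, so
\[
\int_{\mathbf{R}^d}f=\int f\,d\mu+O\Bigl(\sum_Q r_Q^{d+k}\,\|\nabla^k f\|_{L^\infty(Q)}\Bigr).
\]
For $x\in Q$ one has $r_Q^k\sim\delta^k\langle x\rangle^{-pk}$, $r_Q^d=|Q|$, and $Q\subseteq B_1(x)$ since $\diam Q\lesssim\delta\le 1$, hence $\|\nabla^k f\|_{L^\infty(Q)}\le M_1^\infty[\nabla^k f](x)$; therefore $r_Q^{d+k}\|\nabla^k f\|_{L^\infty(Q)}\lesssim\delta^k\int_Q M_1^\infty[\nabla^k f](x)\langle x\rangle^{-pk}\,dx$, and summing over the disjoint $Q$ gives exactly the error term in \eqref{eqn:full_approx}. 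I expect the one spot needing genuine care to be the density-on-cubes step: a priori the nearest point of $A$ to a $y\in Q$ could lie just outside $Q$, which is precisely why one pushes toward the center; everything after that is bookkeeping, with the matching of the $\delta^k$ and the weight $\langle x\rangle^{-pk}$ falling out automatically from $r_Q^k\sim\delta^k\langle x\rangle^{-pk}$.
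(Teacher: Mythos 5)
Your proposal is correct and follows essentially the same route as the paper: partition $\mathbf{R}^d$ into cubes of sidelength $\sim C\delta\langle x\rangle^{-p}$ with $C$ large depending on $k,p,d$, apply Lemma \ref{lem:local_mu} on each cube, sum the resulting measures, and sum the local errors using $Q \subseteq B_1(x)$ to insert $M_1^{\infty}[\nabla^k f]$ — you simply spell out the ``$A$ is dense enough on each cube'' step that the paper leaves implicit. (One micro-quibble: moving $y$ toward the center does not weakly increase the distance to every face — it can decrease the distance to a face whose midpoint $y$ has passed — but with $y' = (1-s)y + s c_Q$ one still gets $\dist(y',\partial Q) \geq s r_Q/2$ while $|y'-y| \leq s\sqrt{d}\, r_Q/2$, which is all your argument needs.)
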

	
	This could be sharpened by allowing the scale $1$ in $M_1^{\infty}$ to decrease as $|x| \to \infty$, but that would only make a difference if the graph of $|\nabla^k f|$ had isolated spikes of width $\lll 1$, which doesn't occur in our application. Similarly, the total variation of $\mu$ can be controlled on smaller balls when the center of the ball is far from the origin, but this isn't necessary for us.
	
	\begin{proof}
		Partition $\mathbf{R}^d$ into dyadic cubes so that the cube $Q_x$ containing $x$ has sidelength $\sim_{p,d} C\delta\langle x \rangle^{-p}$ for all $x \in \mathbf{R}^d$, where $C \gg_{k,p,d} 1$ is a fixed constant. Then $A$ will be dense enough on each cube that Lemma \ref{lem:local_mu} applies. Let $\mu$ be the sum of the resulting measures on each cube. Then \eqref{eqn:full_approx} follows by summing \eqref{eqn:local_approx} over the cubes (here it's important for $\delta$ to be small enough that $Q_x \subseteq B_1(x)$ for all $x$, so $M_1^{\infty}[\nabla^kf](x)$ is an upper bound for $\|\nabla^k f\|_{L^{\infty}(Q_x)}$).
	\end{proof}
	
	\subsection{The Gabor transform} \label{subsec:Gabor}
	
	Fix a standard Schwartz function $\chi \in \mathcal{S}(\mathbf{R}^d)$ with $L^2$ norm equal to $1$. For $y,\eta \in \mathbf{R}^d$, denote
	\begin{align}\label{eqn:time_freq_shift}
		\chi_{y,\eta}(x) = \chi(x-y) e^{2\pi i\eta \cdot x},
	\end{align}
	so $\chi_{y,\eta}$ is a wave packet adapted to $B_1(y) \times B_1(\eta)$ in phase space. The \emph{Gabor transform} $Tf$ of a tempered distribution $f \in \mathcal{S}'(\mathbf{R}^d)$ is
	\begin{align*}
		Tf(y,\eta) = \langle f,\chi_{y,\eta} \rangle
		= \int_{\mathbf{R}^d} f(x) \overline{\chi_{y,\eta}(x)} \d x
	\end{align*}
	(typically $\chi$ is chosen to be a Gaussian, but all we need is that $\chi$ is Schwartz, so we prefer to leave it unspecified) \cite{Folland, Tao_pseudo}.
	Heuristically, this measures the extent to which $f|_{B_{O(1)}(y)}$ oscillates at frequencies $\eta+O(1)$. When $f$ is Schwartz, $Tf(y,\eta)$ decays faster than any polynomial in $y,\eta$. For $f$ a general tempered distribution, $Tf(y,\eta)$ grows at most polynomially in $y,\eta$. The \emph{Gabor inversion formula} recovers $f$ from $Tf$, expressing $f$ as an integral linear combination of the wave packets $\chi_{y,\eta}$. Such an expression is not unique, but it is essentially unique at scales $\gg 1$, because $\chi_{y,\eta}$ is almost orthogonal to $\chi_{y',\eta'}$ when $|y-y'| \gg 1$ or $|\eta-\eta'| \gg 1$.
	
	\begin{prop}[Gabor inversion formula] \label{prop:Gabor_inversion}
		For all Schwartz functions $f \in \mathcal{S}(\mathbf{R}^d)$,
		\begin{align*}
			f = \int_{\mathbf{R}^d \times \mathbf{R}^d} Tf(y,\eta) \chi_{y,\eta} \d y \d\eta.
		\end{align*}
		This holds more generally for tempered distributions when interpreted in the obvious way (i.e., as a Gelfand--Pettis integral).
	\end{prop}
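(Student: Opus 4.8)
The plan is to deduce the inversion formula from the orthogonality (``Moyal'') relations for the Gabor transform: for all $f,g \in \mathcal{S}(\mathbf{R}^d)$,
\[
	\int_{\mathbf{R}^d \times \mathbf{R}^d} Tf(y,\eta)\, \overline{Tg(y,\eta)} \d y \d\eta = \langle f,g\rangle,
\]
where $\langle \cdot,\cdot\rangle$ is the sesquilinear $L^2$ pairing and we use the normalization $\|\chi\|_{L^2}=1$. To prove this I would fix $y$ and set $F_y(x) = f(x)\overline{\chi(x-y)}$ and $G_y(x) = g(x)\overline{\chi(x-y)}$, which are Schwartz; then $Tf(y,\cdot) = \widehat{F_y}$ and $Tg(y,\cdot) = \widehat{G_y}$, so Plancherel's theorem (the chosen Fourier normalization is $L^2$-unitary) gives $\int_{\mathbf{R}^d} Tf(y,\eta)\overline{Tg(y,\eta)}\d\eta = \int_{\mathbf{R}^d} f(x)\overline{g(x)}|\chi(x-y)|^2 \d x$. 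Since $f,g,\chi$ are Schwartz the resulting double integral over $(x,y)$ is absolutely convergent, so Fubini applies, and $\int_{\mathbf{R}^d}|\chi(x-y)|^2\d y = \|\chi\|_{L^2}^2 = 1$ leaves exactly $\int f\overline{g} = \langle f,g\rangle$. The only point requiring care here is to route the frequency integration through Plancherel rather than a naive Fubini, since $\int e^{2\pi i\eta\cdot(x-x')}\d\eta$ is only conditionally convergent; after that the argument is bookkeeping with Schwartz seminorms.

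Granting Moyal, I would finish the Schwartz case as follows. For $f \in \mathcal{S}(\mathbf{R}^d)$ the function $Tf$ is rapidly decreasing in $(y,\eta)$: integrating by parts in $x$ shows that $Tf(y,\cdot) = \widehat{F_y}$ decays faster than any polynomial in $\eta$, uniformly, because the Schwartz seminorms of $F_y$ decay rapidly in $y$ (the product $f(x)\overline{\chi(x-y)}$ forces $x$ to be near both $0$ and $y$). On the other hand $\chi_{y,\eta}$ has Schwartz seminorms growing only polynomially in $(y,\eta)$, so $\int_{\mathbf{R}^d\times\mathbf{R}^d} Tf(y,\eta)\chi_{y,\eta}\d y\d\eta$ converges absolutely in $\mathcal{S}(\mathbf{R}^d)$. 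Pairing against an arbitrary $g \in \mathcal{S}(\mathbf{R}^d)$ and using $\langle\chi_{y,\eta},g\rangle = \overline{Tg(y,\eta)}$, the Moyal relation gives $\langle\int Tf(y,\eta)\chi_{y,\eta}\d y\d\eta,\, g\rangle = \langle f,g\rangle$ for every $g$; since both sides are Schwartz functions and $\mathcal{S}(\mathbf{R}^d)$ is dense in $L^2(\mathbf{R}^d)$, they coincide.

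For the extension to tempered distributions, if $f \in \mathcal{S}'(\mathbf{R}^d)$ then $Tf(y,\eta) = \langle f,\chi_{y,\eta}\rangle$ is continuous and grows at most polynomially in $(y,\eta)$ (apply the seminorm estimate defining temperedness to the polynomially controlled seminorms of $\chi_{y,\eta}$), so for any $g\in\mathcal{S}(\mathbf{R}^d)$ the product $Tf(y,\eta)\overline{Tg(y,\eta)}$ is absolutely integrable, and $\int Tf(y,\eta)\chi_{y,\eta}\d y\d\eta$ is a well-defined Gelfand--Pettis integral valued in $\mathcal{S}'(\mathbf{R}^d)$. Interchanging the $\mathcal{S}'$--$\mathcal{S}$ pairing with this integral (legitimate for a Gelfand--Pettis integral) and using $\overline{Tg(y,\eta)}\langle f,\chi_{y,\eta}\rangle = \langle f, Tg(y,\eta)\chi_{y,\eta}\rangle$ reduces the claim to $\langle f, \int Tg(y,\eta)\chi_{y,\eta}\d y\d\eta\rangle = \langle f,g\rangle$, which is immediate from the Schwartz case applied to $g$ (whose associated integral converges in $\mathcal{S}(\mathbf{R}^d)$, so $f$ passes inside it by continuity). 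The main, and quite modest, obstacle in the whole argument is the handling of the conditionally convergent frequency integral noted above; everything else is routine estimation with Schwartz seminorms.
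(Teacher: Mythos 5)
Your proof is correct, but it runs along a different (dual) track from the paper's. The paper proves the identity pointwise: it integrates in $\eta$ first, recognizes $Tf(y,\cdot)$ as $\widehat{\overline{\chi}(\cdot-y)f(\cdot)}$, applies Fourier inversion to get $|\chi(x-y)|^2 f(x)$, and then integrates in $y$ using $\|\chi\|_{L^2}=1$; the extension to tempered distributions is dismissed with ``by continuity.'' You instead establish the Moyal orthogonality relation $\int Tf\,\overline{Tg} = \langle f,g\rangle$ via Plancherel in $\eta$ and Fubini in $(x,y)$, and then prove the inversion formula weakly by pairing the (absolutely convergent, $\mathcal{S}$-valued) integral against an arbitrary $g \in \mathcal{S}(\mathbf{R}^d)$, concluding by density; finally you spell out the Gelfand--Pettis extension to $\mathcal{S}'(\mathbf{R}^d)$ by moving the pairing inside the integral and invoking the Schwartz case for $g$. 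The underlying analytic input is the same in both arguments (unitarity/inversion of the Fourier transform in the $\eta$-variable together with $\int |\chi(x-y)|^2\,\mathrm{d}y = 1$), so the paper's computation is shorter and gives the pointwise identity directly, while your route buys the isometry property of $T$ as a byproduct and a more explicit, self-contained treatment of the distributional statement. Two small bookkeeping points you may want to make explicit: the passage from the double $(y,\eta)$-integral to the iterated one in the Moyal computation should cite the rapid decay of $Tf,\,Tg$ (which you only establish in the following paragraph), and in the distributional step the interchange of the antilinear pairing $\langle f,\cdot\rangle$ with the $\mathcal{S}$-valued integral deserves a word since the pairing is conjugate-linear in its second slot; neither affects correctness.
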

	
	This is well-known, but the proof is only a few lines, so we include it for the reader's convenience.
	
	\begin{proof}
		It suffices to prove this for Schwartz functions, since it will then extend to tempered distributions by continuity. Integrating in $\eta$ first,
		\begin{align*}
			\int_{\mathbf{R}^d \times \mathbf{R}^d} Tf(y,\eta) \chi_{y,\eta}(x) \d y \d\eta
			= \int_{\mathbf{R}^d} \chi(x-y) Tf(y,\cdot)^{\vee}(x) \d y.
		\end{align*}
		We can write
		\begin{align*}
			Tf(y,\eta) = \widehat{\overline{\chi}(\cdot - y) f(\cdot)}(\eta).
		\end{align*}
		Thus by Fourier inversion,
		\begin{align*}
			\int_{\mathbf{R}^d \times \mathbf{R}^d} Tf(y,\eta) \chi_{y,\eta}(x) \d y \d\eta
			= \int_{\mathbf{R}^d} |\chi(x-y)|^2 f(x) \d y
			= f(x).
		\end{align*}
		The last equality is because $\|\chi\|_{L^2} = 1$.
	\end{proof}
	
	
	For $s \in \mathbf{R}$, let $\mathcal{S}^{-s}(\mathbf{R}^d)$ be the Banach space consisting of tempered distributions $f \in \mathcal{S}'(\mathbf{R}^d)$ for which the norm
	\begin{align*}
		\|f\|_{\mathcal{S}^{-s}}
		= \int_{\mathbf{R}^d \times \mathbf{R}^d} |Tf(y,\eta)| \langle y \rangle^{-s} \langle \eta \rangle^{-s} \d y \d\eta
	\end{align*}
	is finite. Schwartz functions are dense in $\mathcal{S}^{-s}(\mathbf{R}^d)$. The inclusions $\mathcal{S}(\mathbf{R}^d) \hookrightarrow \mathcal{S}^{-s}(\mathbf{R}^d) \hookrightarrow \mathcal{S}'(\mathbf{R}^d)$ are continuous. By integration by parts, one can easily check that
	\begin{align*}
		\|\chi_{y,\eta}\|_{\mathcal{S}^{-s}}
		\lesssim_s \langle y \rangle^{-s} \langle \eta \rangle^{-s}
		\qquad \text{and} \qquad
		\|\widehat{\chi_{y,\eta}}\|_{\mathcal{S}^{-s}}
		\lesssim_s \langle y \rangle^{-s} \langle \eta \rangle^{-s}.
	\end{align*}
	From the latter and the Gabor inversion formula, it follows that the Fourier transform acts as a bounded operator on $\mathcal{S}^{-s}(\mathbf{R}^d)$.
	
	
	\subsection{Injectivity of $\mathcal{F}_{A,B}$} \label{subsec:injectivity_non-end}
	
	We execute the argument outlined in Section \ref{subsubsec:super_non-end}.
	
	Fix $0 < \tilde{p} < p$ and $0 < \tilde{q} < q$ with $\tilde{p} \tilde{q} = 1$. On a first reading, we recommend that the reader takes $p,q > 1$ and $\tilde{p} = \tilde{q} = 1$. \emph{For the remainder of this section, all constants will be allowed to depend implicitly on $\tilde{p},\tilde{q}$, in addition to $p,q,d$.} Let $k$ be large, to be chosen later. Taking $\delta$ sufficiently small depending on $k$, there are measures $\mu_A,\mu_B$ supported on $A,B$ as in Lemma \ref{lem:full_mu}. For $y,\eta \in \mathbf{R}^d$, let
	\begin{align} \label{eqn:nu_def_non-end}
		\nu_{y,\eta}
		=
		\begin{cases}
			\chi_{y,\eta} \mu_A &\mbox{if } \langle y \rangle^{\tilde{p}^{1/2}}
			\geq \langle \eta \rangle^{\tilde{q}^{1/2}},
			\\
			(\widehat{\chi_{y,\eta}} \mu_B)^{\vee} &\mbox{if } \langle y \rangle^{\tilde{p}^{1/2}} < \langle \eta \rangle^{\tilde{q}^{1/2}}.
		\end{cases}
	\end{align}
	Then $\nu_{y,\eta}$ is either a tempered measure on $A$ or the inverse Fourier transform of a tempered measure on $B$, so $\nu_{y,\eta} \in \mathcal{M}_{A,B}$ (recall $\mathcal{M}_{A,B}$ was defined in Section \ref{subsubsec:super_non-end}).
	
	We remark that the inequalities $\langle y \rangle^{\tilde{p}^{1/2}} \geq \langle \eta \rangle^{\tilde{q}^{1/2}}$ and $\langle y \rangle^{\tilde{p}^{1/2}} < \langle \eta \rangle^{\tilde{q}^{1/2}}$ are written in the most symmetric way possible, but what we'll actually use are the equivalent inequalities
	\begin{align*}
		\langle y \rangle^{\tilde{p}^{1/2}}
		\geq \langle \eta \rangle^{\tilde{q}^{1/2}}
		\iff
		\langle \eta \rangle \leq \langle y \rangle^{\tilde{p}}
		\qquad \text{and} \qquad
		\langle y \rangle^{\tilde{p}^{1/2}}
		< \langle \eta \rangle^{\tilde{q}^{1/2}}
		\iff
		\langle y \rangle < \langle \eta \rangle^{\tilde{q}};
	\end{align*}
	these equivalences are where we need $\tilde{p}\tilde{q} = 1$.
	
	
	\begin{prop}\label{prop:nu_approx}
		Assume $s$ is sufficiently large, $k$ is sufficiently large depending on $s$, and $\delta$ is sufficiently small depending on $k,s$. Then
		\begin{align}\label{eqn:nu_approx}
			\|\nu_{y,\eta} - \chi_{y,\eta}\|_{\mathcal{S}^{-s}}
			\leq \frac{1}{2} \langle y \rangle^{-s} \langle \eta \rangle^{-s}.
		\end{align}
	\end{prop}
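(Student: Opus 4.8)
The plan is to estimate $\|\nu_{y,\eta}-\chi_{y,\eta}\|_{\mathcal{S}^{-s}}$ by unfolding the norm through the Gabor transform $T$ and playing two pointwise bounds on $T(\nu_{y,\eta}-\chi_{y,\eta})$ against each other: a high-order vanishing bound from Lemma \ref{lem:full_mu} (integration by sampling), which is strong when the frequency variable $\eta'$ is close to $\eta$, and a crude bound, invoked only when $\eta'$ is so far from $\eta$ that the weight $\langle\eta'\rangle^{-s}$ in the definition of $\|\cdot\|_{\mathcal{S}^{-s}}$ absorbs the loss. First I would reduce to the case $\langle y\rangle^{\tilde{p}^{1/2}}\geq\langle\eta\rangle^{\tilde{q}^{1/2}}$, i.e.\ $\langle\eta\rangle\leq\langle y\rangle^{\tilde{p}}$, in which $\nu_{y,\eta}=\chi_{y,\eta}\mu_A$; the complementary case follows by running the argument below on the Fourier side, with the wave packet $\widehat{\chi_{y,\eta}}$ (adapted to $B_1(\eta)\times B_1(-y)$), with $\mu_B$, and with $q,\tilde{q}$ in place of $p,\tilde{p}$, using that the Fourier transform is bounded on $\mathcal{S}^{-s}$ --- its operator norm $C_s$ being absorbed by shrinking $\delta$. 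So assume $\langle\eta\rangle\leq\langle y\rangle^{\tilde{p}}$.

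Unfolding,
\[
\|\nu_{y,\eta}-\chi_{y,\eta}\|_{\mathcal{S}^{-s}}=\int_{\mathbf{R}^d\times\mathbf{R}^d}\bigl|T(\nu_{y,\eta}-\chi_{y,\eta})(y',\eta')\bigr|\,\langle y'\rangle^{-s}\langle\eta'\rangle^{-s}\,dy'\,d\eta',
\]
and from the definition of the Gabor transform, $T(\nu_{y,\eta}-\chi_{y,\eta})(y',\eta')=\int f\,d\mu_A-\int_{\mathbf{R}^d}f$, where $f=f_{y',\eta'}=\chi_{y,\eta}\,\overline{\chi_{y',\eta'}}$ is Schwartz. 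Since $A$ is $(p,\delta)$-dense and $\delta$ is small depending on $k$, Lemma \ref{lem:full_mu} applies and gives $|T(\nu_{y,\eta}-\chi_{y,\eta})(y',\eta')|\lesssim_k\delta^k\int M_1^{\infty}[\nabla^k f](x)\,\langle x\rangle^{-pk}\,dx$. By the Leibniz rule and the standard Schwartz bounds on $\chi$, for every $N$ one has $|\nabla^k f(x)|\lesssim_{N,k}\langle\eta-\eta'\rangle^k\langle x-y\rangle^{-N}\langle x-y'\rangle^{-N}$ --- the factor $\langle\eta-\eta'\rangle^k$ arising from derivatives landing on $e^{2\pi i(\eta-\eta')\cdot x}$ --- and integrating $\langle x\rangle^{-pk}$ against the two Schwartz bumps (concentrated near $y\approx y'$, where $\langle x\rangle\sim\langle y\rangle$) yields, for $N$ large,
\[
|T(\nu_{y,\eta}-\chi_{y,\eta})(y',\eta')|\lesssim_{N,k}\delta^k\,\langle\eta-\eta'\rangle^k\,\langle y-y'\rangle^{-N}\,\langle y\rangle^{-pk}.
\]
Separately, as $|\mu_A|$ has total variation $O(1)$ on unit balls and $\chi$ is Schwartz, $|T(\nu_{y,\eta}-\chi_{y,\eta})(y',\eta')|\leq|T\nu_{y,\eta}(y',\eta')|+|T\chi_{y,\eta}(y',\eta')|\lesssim_N\langle y-y'\rangle^{-N}$ uniformly in $\eta'$.

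Taking the minimum of the two bounds and integrating in $y'$ first (using $\int\langle y-y'\rangle^{-N}\langle y'\rangle^{-s}\,dy'\lesssim_{N,s}\langle y\rangle^{-s}$ for $N$ large), the problem reduces to bounding $\langle y\rangle^{-s}\int_{\mathbf{R}^d}\min\{1,\ \delta^k\langle\eta-\eta'\rangle^k\langle y\rangle^{-pk}\}\,\langle\eta'\rangle^{-s}\,d\eta'$. I would split this at $|\eta'-\eta|\sim R:=\delta^{-1}\langle y\rangle^p$, which satisfies $R\gg\langle\eta\rangle$ because $\tilde{p}<p$ and $\delta$ is small. On $|\eta'-\eta|\lesssim R$, use the first bound; then $\int_{|\eta'-\eta|\lesssim R}\langle\eta-\eta'\rangle^k\langle\eta'\rangle^{-s}\,d\eta'\lesssim\langle\eta\rangle^k+R^{k+d-s}$, a \emph{finite} integral regardless of the sign of $k-s$, which is exactly what lets $k$ be taken larger than $s$. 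On $|\eta'-\eta|\gtrsim R$, use the bound $1$; since $R\gg\langle\eta\rangle$ we have $\langle\eta'\rangle\gtrsim R$ there, so $\int_{|\eta'-\eta|\gtrsim R}\langle\eta'\rangle^{-s}\,d\eta'\lesssim R^{-(s-d)}$. Collecting the pieces (and using $R=\delta^{-1}\langle y\rangle^p$),
\[
\|\nu_{y,\eta}-\chi_{y,\eta}\|_{\mathcal{S}^{-s}}\lesssim_{N,k,s}\ \delta^k\,\langle\eta\rangle^k\,\langle y\rangle^{-s-pk}+\delta^{s-d}\,\langle y\rangle^{-s-p(s-d)}.
\]
For the first term, $\langle\eta\rangle\leq\langle y\rangle^{\tilde{p}}$ gives $\delta^k\langle\eta\rangle^{k+s}\langle y\rangle^{-pk}\leq\delta^k\langle y\rangle^{-(p-\tilde{p})k+\tilde{p}s}\leq\delta^k$ once $k$ is large enough (depending on $s$) that $(p-\tilde{p})k\geq\tilde{p}s$. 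For the second term, $\langle\eta\rangle^{-s}\geq\langle y\rangle^{-\tilde{p}s}$ reduces matters to $\delta^{s-d}\langle y\rangle^{-(p-\tilde{p})s+pd}\leq\delta^{s-d}$, valid once $s$ is large enough (depending on $p,q,d$) that $(p-\tilde{p})s\geq pd$. Hence, choosing $s$ large, then $k$ large depending on $s$, then $N$ large depending on $k,s$, and finally $\delta$ small depending on $k,s$, both terms are $\leq\tfrac14\langle y\rangle^{-s}\langle\eta\rangle^{-s}$, and \eqref{eqn:nu_approx} follows.

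The main obstacle is the simultaneous bookkeeping of powers of $\langle y\rangle$: one must take $k$ large relative to $s$ --- needed both for the $\langle\eta-\eta'\rangle^k$-loss in the sampling bound and for the first term above --- while every exponent of $\langle y\rangle$ must end up nonpositive, and it is precisely this tension that forces the strict inequality $\tilde{p}<p$, i.e.\ $pq>1$, into the argument. It is equally essential that the $\eta'$-integral is truncated at the scale $R=\delta^{-1}\langle y\rangle^p$, rather than at $\langle\eta\rangle$: the integral $\int_{\mathbf{R}^d}\langle\eta-\eta'\rangle^k\langle\eta'\rangle^{-s}\,d\eta'$ diverges when $k>s$, and only because $R$ exceeds $\langle\eta\rangle$ by a definite power of $\langle y\rangle$ does the weight $\langle\eta'\rangle^{-s}$ control the crude bound on the tail.
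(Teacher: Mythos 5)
Your proposal is correct and follows essentially the same route as the paper: the refined pointwise bound on $T(\nu_{y,\eta}-\chi_{y,\eta})$ via Lemma \ref{lem:full_mu} (gaining $\delta^k\langle y\rangle^{-pk}$ at the cost of a frequency factor), the trivial bound $\langle y-z\rangle^{-\infty}$ for large frequencies, a split of the frequency integration at a threshold of size roughly $\delta^{-O(1)}\langle y\rangle^{p-O(\varepsilon)}$, and Fourier symmetry plus boundedness of the Fourier transform on $\mathcal{S}^{-s}$ for the case $\langle y\rangle^{\tilde{p}^{1/2}}<\langle\eta\rangle^{\tilde{q}^{1/2}}$. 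The only differences are cosmetic (you cut at $|\eta'-\eta|\sim\delta^{-1}\langle y\rangle^{p}$ and track $\langle\eta-\eta'\rangle^{k}$, whereas the paper cuts at $\langle\zeta\rangle\lesssim\delta^{-1/2}\langle y\rangle^{\tilde{p}+\varepsilon}$ and uses $\max\{\langle\eta\rangle,\langle\zeta\rangle\}^{k}$), and your bookkeeping of the quantifiers $s$, $k$, $N$, $\delta$ is consistent with the statement.
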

	
	\begin{proof}
		We first consider the case $\langle y \rangle^{\tilde{p}^{1/2}} \geq \langle \eta \rangle^{\tilde{q}^{1/2}}$, so $\nu_{y,\eta} = \chi_{y,\eta} \mu_A$. To estimate $\|\nu_{y,\eta} - \chi_{y,\eta}\|_{\mathcal{S}^{-s}}$, we want pointwise control on $T(\nu_{y,\eta} - \chi_{y,\eta})(z,\zeta)$. The trivial bound, using the triangle inequality and the fact that $\mu_A$ has total variation $O_k(1)$ on unit balls, is
		\begin{align}\label{eqn:T(error)_triv}
			|T(\nu_{y,\eta} - \chi_{y,\eta})(z,\zeta)|
			\leq |T\nu_{y,\eta}(z,\zeta)| + |T\chi_{y,\eta}(z,\zeta)|
			\lesssim_k \langle y-z \rangle^{-\infty}.
		\end{align}
		When $\zeta$ is not too large, we can do better ($\eta$ is not too large because $\langle \eta \rangle \leq \langle y \rangle^{\tilde{p}}$). By \eqref{eqn:full_approx},
		\begin{align*}
			|T(\nu_{y,\eta} - \chi_{y,\eta})(z,\zeta)|
			&= \Big|\int \chi_{y,\eta} \overline{\chi_{z,\zeta}} \d\mu_A - \int_{\mathbf{R}^d} \chi_{y,\eta} \overline{\chi_{z,\zeta}}\Big|
			\\&\lesssim_k \delta^k \int_{\mathbf{R}^d} M_1^{\infty}[\nabla^k(\chi_{y,\eta} \overline{\chi_{z,\zeta}})](x) \langle x \rangle^{-pk} \d x
			\\&\lesssim_k \delta^k \langle y-z \rangle^{-\infty} \max\{\langle \eta \rangle, \langle \zeta \rangle\}^k \langle y \rangle^{-pk}.
		\end{align*}
		Write $p = \tilde{p} + 2\varepsilon$, and suppose $\langle \zeta \rangle \lesssim \delta^{-1/2} \langle y \rangle^{\tilde{p}+\varepsilon}$ (this is our condition that $\zeta$ is not too large). Then
		\begin{align*}
			\max\{\langle \eta \rangle, \langle \zeta \rangle\}
			\lesssim \delta^{-1/2} \langle y \rangle^{\tilde{p}+\varepsilon}
			= \delta^{-1/2} \langle y \rangle^{p-\varepsilon}.
		\end{align*}
		We thus obtain
		\begin{align*}
			|T(\nu_{y,\eta} - \chi_{y,\eta})(z,\zeta)|
			\lesssim_k \delta^{k/2} \langle y-z \rangle^{-\infty} \langle y \rangle^{-k\varepsilon}.
		\end{align*}
		Combining this with the trivial bound \eqref{eqn:T(error)_triv} for large $\zeta$,
		\begin{align*}
			\|\nu_{y,\eta} - \chi_{y,\eta}\|_{\mathcal{S}^{-s}}
			&\lesssim_k \int_{\{(z,\zeta) : \langle \zeta \rangle \lesssim \delta^{-1/2} \langle y \rangle^{\tilde{p}+\varepsilon}\}} \delta^{k/2} \langle y-z \rangle^{-\infty} \langle y \rangle^{-k\varepsilon} \langle z \rangle^{-s} \langle \zeta \rangle^{-s} \d z \d\zeta
			\\&\qquad+ \int_{\{(z,\zeta) : \langle \zeta \rangle \gtrsim \delta^{-1/2} \langle y \rangle^{\tilde{p}+\varepsilon}\}} \langle y-z \rangle^{-\infty} \langle z \rangle^{-s} \langle \zeta \rangle^{-s} \d z \d\zeta
			\\&\lesssim_{k,s} \delta^{k/2} \langle y \rangle^{-s-k\varepsilon} + \langle y \rangle^{-s} (\delta^{-1/2} \langle y \rangle^{\tilde{p}+\varepsilon})^{-(s-d)}.
		\end{align*}
		Since $\langle \eta \rangle \leq \langle y \rangle^{\tilde{p}}$, we can estimate the right hand side by
		\begin{align*}
			\|\nu_{y,\eta} - \chi_{y,\eta}\|_{\mathcal{S}^{-s}}
			\lesssim_{k,s} [\delta^{k/2} \langle y \rangle^{\tilde{p}s-k\varepsilon} + \delta^{(s-d)/2} \langle y \rangle^{\tilde{p}d - \varepsilon(s-d)}] \langle y \rangle^{-s} \langle \eta \rangle^{-s}.
		\end{align*}
		Taking $s$ sufficiently large, $k$ sufficiently large depending on $s$, and $\delta$ sufficiently small depending on $k,s$, we obtain the desired bound \eqref{eqn:nu_approx} in the case $\langle y \rangle^{\tilde{p}^{1/2}} \geq \langle \eta \rangle^{\tilde{q}^{1/2}}$.
		
		In the other case $\langle y \rangle^{\tilde{p}^{1/2}} < \langle \eta \rangle^{\tilde{q}^{1/2}}$, write
		\begin{align}\label{eqn:fourier_reduction}
			\|\nu_{y,\eta} - \chi_{y,\eta}\|_{\mathcal{S}^{-s}}
			= \|(\widehat{\chi_{y,\eta}} \mu_B - \widehat{\chi_{y,\eta}})^{\vee}\|_{\mathcal{S}^{-s}}
			\lesssim_s \|\widehat{\chi_{y,\eta}} \mu_B - \widehat{\chi_{y,\eta}}\|_{\mathcal{S}^{-s}}.
		\end{align}
		The Fourier transform $\widehat{\chi_{y,\eta}}$ is a wave packet adapted to $B_1(y') \times B_1(\eta')$ in phase space, where $(y',\eta') = (\eta,-y)$. Now $\langle y' \rangle^{\tilde{q}^{1/2}} \geq \langle \eta' \rangle^{\tilde{p}^{1/2}}$ and $B$ is $(q,\delta)$-dense, so we can apply the same argument as in the first case (but with $p,q$ swapped and $\tilde{p},\tilde{q}$ swapped) to estimate the right hand side of \eqref{eqn:fourier_reduction}.
	\end{proof}
	
	Take $s,k,\delta$ to be as in Proposition \ref{prop:nu_approx}, so \eqref{eqn:nu_approx} holds. Define $P \colon \mathcal{S}(\mathbf{R}^d) \to \mathcal{M}_{A,B}$ by
	\begin{align*}
		Pf = \int_{\mathbf{R}^d \times \mathbf{R}^d} Tf(y,\eta) \nu_{y,\eta} \d y \d\eta.
	\end{align*}
	This is our parametrix. By Gabor inversion and \eqref{eqn:nu_approx},
	\begin{align*}
		\|(1-P)f\|_{\mathcal{S}^{-s}}
		\leq \frac{1}{2} \|f\|_{\mathcal{S}^{-s}}
	\end{align*}
	for all $f \in \mathcal{S}(\mathbf{R}^d)$. Therefore $P$ extends continuously to a bounded linear operator on $\mathcal{S}^{-s}(\mathbf{R}^d)$, and the power series
	\begin{align*}
		\sum_{j=0}^{\infty} (1-P)^j
	\end{align*}
	converges in operator norm. This series inverts $P$, so $P$ surjects onto $\mathcal{S}^{-s}(\mathbf{R}^d)$. But $P$ maps into $\mathcal{M}_{A,B}$, so we must have $\mathcal{M}_{A,B} \supseteq \mathcal{S}^{-s}(\mathbf{R}^d)$, and hence $\mathcal{M}_{A,B} = \mathcal{S}'(\mathbf{R}^d)$ because $\mathcal{M}_{A,B}$ is closed and $\mathcal{S}^{-s}(\mathbf{R}^d)$ is dense. We conclude by duality that $\mathcal{F}_{A,B}$ is injective.
	
	\subsection{Completing the proof of Theorem \ref{thm:super_non-end}} \label{subsec:proof_super_non-end} Suppose now that $\delta$ is arbitrary. We must show that $\Ker \mathcal{F}_{A,B}$ has dimension $O_{\delta}(1)$, and $\Coker \mathcal{F}_{A,B}$ is infinite-dimensional. This is very similar to Section \ref{subsec:completing_sub_non-end}.
	
	To prove the first statement, add $O_{\delta}(1)$ points to $A,B$ to form supersets $A' \supseteq A$ and $B' \supseteq B$ which are $(p',\delta')$-dense and $(q',\delta')$-dense, respectively, for fixed $p' < p$ and $q' < q$ with $p'q' > 1$, and for $\delta'$ sufficiently small depending on $p',q'$. Then $\mathcal{F}_{A',B'}$ is injective by our work so far. It follows directly that $\dim \Ker \mathcal{F}_{A,B} = O_{\delta}(1)$.
	
	To prove the second statement, remove infinitely many points from $A,B$ to form subsets $A' \subseteq A$ and $B' \subseteq B$ which are $(p,\delta')$-dense and $(q,\delta')$-dense, respectively, for some $\delta' \lesssim \delta$. The subspace of $\mathcal{S}(A) \oplus \mathcal{S}(B)$ consisting of sequences which vanish on $A'$ and $B'$ is infinite-dimensional. Since $\Ker \mathcal{F}_{A',B'}$ is finite-dimensional, the image of $\mathcal{F}_{A,B}$ must have finite-dimensional intersection with this subspace. Therefore the image of $\mathcal{F}_{A,B}$ has infinite codimension in $\mathcal{S}(A) \oplus \mathcal{S}(B)$, i.e., $\Coker \mathcal{F}_{A,B}$ is infinite-dimensional.
	
	
	\section{Dense critical case: proof of Theorem \ref{thm:super_end}} \label{sec:super_end}
	
	
	\subsection{Oscillatory integral estimates} In this section, we carry out the usual integration by parts for oscillatory integrals of the form
	\begin{align*}
		\int_{\mathbf{R}^d} \psi e^{i\phi}
	\end{align*}
	with non-stationary phase (i.e., $\nabla\phi \neq 0$), paying attention to the dependence on derivatives of $\phi,\psi$.
	
	Since $\nabla\phi \neq 0$, we can integrate by parts by writing
	\begin{align*}
		\int \psi e^{i\phi}
		= \frac{1}{i}\int \psi \frac{\nabla \phi}{|\nabla\phi|^2} \cdot \nabla e^{i\phi}
		= i \int \Div\Big(\psi \frac{\nabla\phi}{|\nabla\phi|^2}\Big) e^{i\phi}.
	\end{align*}
	Let $\mathcal{D}_{\phi}$ denote the differential operator
	\begin{align*}
		\mathcal{D}_{\phi} \psi
		= i \Div\Big(\psi \frac{\nabla \phi}{|\nabla\phi|^2}\Big).
	\end{align*}
	Iterating the above,
	\begin{align}\label{eqn:D_phi_parts}
		\int \psi e^{i\phi}
		= \int (\mathcal{D}_{\phi}^n \psi) e^{i\phi}
	\end{align}
	for any $n \in \mathbf{N}$. We now want to estimate $\mathcal{D}_{\phi}^n \psi$.
	
	\begin{lem} \label{lem:D_phi_general}
		One has the pointwise bound
		\begin{align*}
			|\mathcal{D}_{\phi}^n \psi|
			\lesssim_n \sum_{k=0}^n \frac{1}{|\nabla \phi|^{n+k}} \sum_{\substack{j_1,\dots,j_k \geq 2 \\ j_1 + \cdots + j_k \leq n+k}} |\nabla^{j_1} \phi| \cdots |\nabla^{j_k} \phi| |\nabla^{n+k-j_1 - \cdots - j_k} \psi|
		\end{align*}
		(when $k=0$ we take the inner sum to be equal to $|\nabla^n\psi|$).
	\end{lem}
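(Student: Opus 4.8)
The plan is to prove the bound by induction on $n$, but it is cleaner to prove a slightly stronger \emph{structural} statement that is manifestly closed under one more application of $\mathcal{D}_{\phi}$ (a direct induction on the stated inequality does not work, since $\mathcal{D}_{\phi}$ differentiates and so does not respect an inequality). Call a scalar expression an \emph{admissible term of type $(n,k)$} if it has the form
\[
c\,|\nabla\phi|^{-2N}\,\big(\text{full contraction of }\ell\text{ copies of }\nabla\phi,\ \nabla^{j_1}\phi,\dots,\nabla^{j_k}\phi,\text{ and }\nabla^m\psi\big),
\]
where $c$ is a constant, $j_1,\dots,j_k\ge 2$, $J:=j_1+\cdots+j_k$, $0\le k\le n$, and the integers $N,\ell,m\ge 0$ satisfy $2N-\ell=n+k$ and $m=n+k-J$ (so in particular $J\le n+k$). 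Such a term obeys $|\,\cdot\,|\lesssim_n|\nabla\phi|^{-(n+k)}|\nabla^{j_1}\phi|\cdots|\nabla^{j_k}\phi||\nabla^m\psi|$, so the lemma follows once we show that $\mathcal{D}_{\phi}^n\psi$ is a sum of $O_n(1)$ admissible terms of type $(n,k)$ for various $k\le n$, with coefficients $c$ of size $O_n(1)$.

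The base case $n=0$ is $\psi$ itself, with $k=N=\ell=m=0$. For the inductive step, apply
\[
\mathcal{D}_{\phi}g=i\,\nabla g\cdot\frac{\nabla\phi}{|\nabla\phi|^2}+i\,g\,\Div\!\Big(\frac{\nabla\phi}{|\nabla\phi|^2}\Big),\qquad\Div\!\Big(\frac{\nabla\phi}{|\nabla\phi|^2}\Big)=\frac{\Delta\phi}{|\nabla\phi|^2}-2\,\frac{(\nabla\phi)^{\mathsf{T}}(\nabla^2\phi)(\nabla\phi)}{|\nabla\phi|^4},
\]
to each admissible term $g$ of type $(n,k)$, expanding $\nabla g$ by the product rule over the factors $|\nabla\phi|^{-2N}$, the $\ell$ copies of $\nabla\phi$, the $\nabla^{j_i}\phi$, and $\nabla^m\psi$. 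Every resulting term is again admissible, of type $(n+1,k)$ or $(n+1,k+1)$, and one checks case by case that the two identities $2N'-\ell'=(n+1)+k'$ and $m'=(n+1)+k'-J'$ persist. The mechanism is uniform: multiplying by the extra $\nabla\phi/|\nabla\phi|^2$ or $1/|\nabla\phi|^2$ changes $(N,\ell)$ by $(1,1)$ or $(1,0)$; differentiating a factor $\nabla^{j_i}\phi$ sends $j_i\mapsto j_i+1$ (still $\ge 2$) so $J\mapsto J+1$ with $k$ fixed; differentiating $\nabla^m\psi$ sends $m\mapsto m+1$; differentiating one of the $\ell$ copies of $\nabla\phi$ converts it to a $\nabla^2\phi$, so $(\ell,k,J)\mapsto(\ell-1,k+1,J+2)$; and differentiating $|\nabla\phi|^{-2N}$ produces $-2N|\nabla\phi|^{-2N-2}(\nabla^2\phi)(\nabla\phi)$, i.e. $(N,\ell,k,J)\mapsto(N+1,\ell+1,k+1,J+2)$. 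In each of the three pieces of $\mathcal{D}_{\phi}g$ the arithmetic works out, and $k$ never decreases and increases by at most $1$, so $k\le n+1$ throughout.

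Finally, each admissible term has $O_n(1)$ factors, so the product rule produces $O_n(1)$ new terms from each, and the new coefficients are the old ones times integers of size $O_n(1)$ (the only non-unit factor being the $2N$ with $N=O_n(1)$); hence the count and the coefficient sizes stay $O_n(1)$ through the induction. This proves the structural statement, and with it the lemma. The only real content is the bookkeeping, and the one point that deserves attention is the constraint $j_i\ge 2$: first-order derivatives of $\phi$ never survive as separate factors but are always absorbed into powers of $|\nabla\phi|$, and the operation "differentiate a $\nabla\phi$'' is precisely what feeds the list $j_1,\dots,j_k$, always entering at $j_i=2$.
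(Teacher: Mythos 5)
Your proof is correct, but it takes a different route from the paper. You carry out the direct induction on $n$, strengthening the statement to a structural one: $\mathcal{D}_{\phi}^n\psi$ is a sum of $O_n(1)$ fully contracted terms $c\,|\nabla\phi|^{-2N}(\nabla\phi)^{\otimes\ell}\,\nabla^{j_1}\phi\cdots\nabla^{j_k}\phi\,\nabla^m\psi$ with the invariants $2N-\ell=n+k$, $m=n+k-J$, $j_i\ge 2$, and you verify that each of the product-rule cases (differentiating $\nabla^m\psi$, a $\nabla^{j_i}\phi$, one of the $\ell$ copies of $\nabla\phi$, or the weight $|\nabla\phi|^{-2N}$, and the two pieces of $\Div(\nabla\phi/|\nabla\phi|^2)$) preserves these relations; the bookkeeping is sound (in particular $k$ increases by at most one per step, so $k\le n$, and first derivatives of $\phi$ only ever appear contracted into powers of $|\nabla\phi|$, which is exactly the $j_i\ge 2$ normalization). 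The paper explicitly declines this route (``This could easily be proven by induction'') and instead argues as follows: it records the general shape $(\partial^{\alpha_1}\phi)\cdots(\partial^{\alpha_k}\phi)|\nabla\phi|^{-\ell}\partial^{\beta}\psi$ of the terms, then pins down $\ell=n+k$ and $|\beta|=n+k-\sum_i|\alpha_i|$ by a homogeneity (dimensional-analysis) argument: comparing, for $\phi$ homogeneous of order $r$, the homogeneity of such a term with that of the terms $\partial^{\gamma_1}(\partial_{i_1}\phi/|\nabla\phi|^2)\cdots\partial^{\gamma_n}(\partial_{i_n}\phi/|\nabla\phi|^2)\partial^{\beta}\psi$, and requiring agreement for all $r$; the constraint $j_i\ge 2$ is then imposed by cancelling stray $\nabla\phi$ factors against the denominator. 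What your argument buys is a self-contained verification in which the exponent relations are checked case by case rather than inferred; what the paper's scaling argument buys is brevity and, as the author says, an explanation of how one would guess the right-hand side in the first place. Your one-sentence justifications that the number of terms and the coefficient sizes stay $O_n(1)$ (each term has $O_n(1)$ factors, and $N=O_n(1)$ since $\ell\le 2n$) are correct and suffice, since all constants may depend on $n$.
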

	
	\begin{proof}
		This could easily be proven by induction. Instead we give a proof which suggests how to guess the right hand side.
		
		By the product and quotient rules, $\mathcal{D}_{\phi}^n \psi$ is a linear combination of terms of the form
		\begin{align}\label{eqn:D_phi_term}
			\frac{(\partial^{\alpha_1}\phi) \cdots (\partial^{\alpha_k} \phi)}{|\nabla \phi|^{\ell}} \partial^{\beta}\psi
		\end{align}
		for some $k,\ell \in \mathbf{N}$ and multiindices $\alpha_1,\dots,\alpha_k,\beta \in \mathbf{N}^d$ (here $\ell$ will be even, but we won't use this for our estimates).
		By homogeneity considerations, we can constrain these parameters. Suppose $\phi$ is homogeneous of order $r$. By the product rule, $\mathcal{D}_{\phi}^n \psi$ is a linear combination of terms of the form
		\begin{align}\label{eqn:D_phi_term'}
			\partial^{\gamma_1} \Big(\frac{\partial_{i_1} \phi}{|\nabla\phi|^2}\Big) \cdots \partial^{\gamma_n} \Big(\frac{\partial_{i_n} \phi}{|\nabla \phi|^2}\Big) \partial^{\beta}\psi
		\end{align}
		for some indices $i_1,\dots,i_n \in \{1,\dots,d\}$ and multiindices $\gamma_1,\dots,\gamma_n,\beta \in \mathbf{N}^d$ with $|\gamma_1| + \cdots + |\gamma_n| + |\beta| = n$. The terms \eqref{eqn:D_phi_term} come from further expanding this. The part of $\eqref{eqn:D_phi_term}$ depending on $\phi$ is homogeneous of order
		\begin{align*}
			kr - |\alpha_1| - \cdots - |\alpha_k| - \ell(r-1),
		\end{align*}
		while the part of \eqref{eqn:D_phi_term'} depending on $\phi$ is homogeneous of order
		\begin{align*}
			n(1-r) - |\gamma_1| - \cdots - |\gamma_n|
			= |\beta| - nr.
		\end{align*}
		These orders must be equal for all $r$, so we must have
		\begin{align*}
			\ell = n+k
			\qquad \text{and} \qquad
			|\beta| = n+k - |\alpha_1| - \cdots - |\alpha_k|.
		\end{align*}
		Denoting $j_i = |\alpha_i|$, we conclude that $\mathcal{D}_{\phi}^n\psi$ is bounded by a sum of terms of the form
		\begin{align*}
			\frac{|\nabla^{j_1}\phi| \cdots |\nabla^{j_k}\phi|}{|\nabla\phi|^{n+k}} |\nabla^{n+k-j_1 - \cdots - j_k} \psi|
		\end{align*}
		with $n+k-j_1-\cdots - j_k \geq 0$. We may assume $j_1,\dots,j_k \geq 2$, as otherwise we can cancel some factors in the numerator and the denominator (and this cancellation doesn't change the shape of the term). Then the condition $j_1 + \cdots + j_k \leq n+k$ forces $k \leq n$. This completes the proof.
	\end{proof}
	
	We give two applications, Corollary \ref{cor:trivial_non-stationary} and Proposition \ref{prop:osc_int_main}. The first doesn't require such precise estimates as Lemma \ref{lem:D_phi_general} gives, but the second does.
	
	\begin{cor} \label{cor:trivial_non-stationary}
		Let $M,r > 0$. Suppose $\phi$ obeys the derivative bounds
		\begin{align*}
			|\nabla\phi(x)| \gtrsim M \langle x \rangle^r
			\qquad \text{and} \qquad
			|\nabla^k \phi(x)| \lesssim_k M \langle x \rangle^r
		\end{align*}
		for all $k \geq 2$ and $x \in \supp \psi$. Then for any $n \in \mathbf{N}$,
		\begin{align*}
			\Big|\int_{\mathbf{R}^d} \psi e^{i\phi}\Big|
			\lesssim_n M^{-n} \sum_{i=0}^n \int_{\mathbf{R}^d} |\nabla^i\psi(x)| \langle x \rangle^{-rn} \d x.
		\end{align*}
	\end{cor}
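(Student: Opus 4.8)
The plan is to combine the integration-by-parts identity \eqref{eqn:D_phi_parts} with the pointwise estimate of Lemma \ref{lem:D_phi_general}. Fix $n \in \mathbf{N}$. We may assume $\psi$ is smooth and decays fast enough (e.g.\ compactly supported, as in our applications) that the manipulations below are legitimate; otherwise the right-hand side of the desired inequality is infinite and there is nothing to prove. Since $|\nabla\phi(x)| \gtrsim M\langle x \rangle^r > 0$ on $\supp\psi$, the operator $\mathcal{D}_{\phi}$ is well-defined there, and \eqref{eqn:D_phi_parts} gives
\begin{align*}
	\Big|\int_{\mathbf{R}^d} \psi e^{i\phi}\Big|
	= \Big|\int_{\mathbf{R}^d} (\mathcal{D}_{\phi}^n \psi) e^{i\phi}\Big|
	\leq \int_{\mathbf{R}^d} |\mathcal{D}_{\phi}^n \psi|.
\end{align*}
So it suffices to prove the pointwise bound $|\mathcal{D}_{\phi}^n\psi(x)| \lesssim_n M^{-n} \langle x \rangle^{-rn} \sum_{i=0}^n |\nabla^i\psi(x)|$ for $x \in \supp\psi$.

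Next I would substitute the hypotheses into Lemma \ref{lem:D_phi_general}. On $\supp\psi$ we have $|\nabla\phi|^{-(n+k)} \lesssim M^{-(n+k)} \langle x \rangle^{-r(n+k)}$, and for each $j_i \geq 2$ the bound $|\nabla^{j_i}\phi| \lesssim_{j_i} M\langle x \rangle^r$; since there are at most $n$ such factors and $j_1,\dots,j_k \leq n+k \lesssim_n 1$, the product satisfies $|\nabla^{j_1}\phi| \cdots |\nabla^{j_k}\phi| \lesssim_n M^k \langle x \rangle^{rk}$. Hence each summand in Lemma \ref{lem:D_phi_general} obeys
\begin{align*}
	\frac{|\nabla^{j_1}\phi| \cdots |\nabla^{j_k}\phi|}{|\nabla\phi|^{n+k}} |\nabla^{n+k-j_1-\cdots-j_k}\psi|
	\lesssim_n M^{-(n+k)} \langle x \rangle^{-r(n+k)} \cdot M^k \langle x \rangle^{rk} \cdot |\nabla^{n+k-j_1-\cdots-j_k}\psi(x)|
	= M^{-n} \langle x \rangle^{-rn} |\nabla^{n+k-j_1-\cdots-j_k}\psi(x)|,
\end{align*}
where the powers of $M$ and $\langle x \rangle$ collapse to $M^{-n}\langle x \rangle^{-rn}$ independently of $k$ and the $j_i$'s — this is precisely the homogeneity balance that Lemma \ref{lem:D_phi_general} was arranged to encode. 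Since each $j_i \geq 2$, the order of differentiation $i := n+k-j_1-\cdots-j_k$ satisfies $0 \leq i \leq n-k \leq n$, and the double sum has only $O_n(1)$ terms, so summing yields the claimed pointwise bound.

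Finally, integrating this pointwise bound over $\mathbf{R}^d$ gives
\begin{align*}
	\Big|\int_{\mathbf{R}^d} \psi e^{i\phi}\Big|
	\leq \int_{\mathbf{R}^d} |\mathcal{D}_{\phi}^n \psi|
	\lesssim_n M^{-n} \sum_{i=0}^n \int_{\mathbf{R}^d} |\nabla^i\psi(x)| \langle x \rangle^{-rn} \d x,
\end{align*}
which is the corollary. I do not expect a genuine obstacle here: all the real content sits in Lemma \ref{lem:D_phi_general}, and the only thing to verify is that the scaling works out, which it does by design. The sole point of mild care is the legitimacy of the repeated integration by parts \eqref{eqn:D_phi_parts} for the given $\psi$; this is handled either by restricting to $\psi$ for which the right-hand side is finite (in particular $\psi \in C_c^\infty$, the case arising in the applications) or by a routine cutoff-and-limit argument.
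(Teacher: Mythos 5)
Your proposal is correct and is exactly the paper's argument: the paper proves this corollary by declaring it immediate from the identity \eqref{eqn:D_phi_parts} and Lemma \ref{lem:D_phi_general}, which is precisely the substitution and power-counting you carry out. The bookkeeping (the collapse to $M^{-n}\langle x\rangle^{-rn}$ and the range $0 \leq n+k-j_1-\cdots-j_k \leq n$) is the intended "immediate" step, so there is nothing further to add.
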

	
	\begin{proof}
		This is immediate from \eqref{eqn:D_phi_parts} and Lemma \ref{lem:D_phi_general}.
	\end{proof}
	
	\begin{prop}\label{prop:osc_int_main}
		Let $X \geq 1$, let $R,V,\lambda,M > 0$ with $R \lesssim X$, and let $r \in \mathbf{R}$. Suppose $\psi$ satisfies
		\begin{align*}
			\|\nabla^k\psi\|_{L^1(\mathbf{R}^d)} \lesssim_k VR^{-k}
		\end{align*}
		for all $k \geq 0$, and $\phi$ satisfies
		\begin{align*}
			|\nabla\phi(x)| \gtrsim \lambda
			\qquad \text{and} \qquad
			|\nabla^k \phi(x)| \lesssim_k M X^{r+1-k}
		\end{align*}
		for all $k \geq 2$ and $x \in \supp\psi$. Then
		\begin{align*}
			\Big|\int_{\mathbf{R}^d} \psi e^{i\phi}\Big|
			\lesssim V \Big\langle \frac{\lambda R}{\langle \lambda^{-1} M R X^{r-1} \rangle} \Big\rangle^{-\infty}.
		\end{align*}
	\end{prop}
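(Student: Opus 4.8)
The plan is to integrate by parts $n$ times using the operator $\mathcal{D}_\phi$, estimate with Lemma \ref{lem:D_phi_general}, and then let $n$ be arbitrary. Writing $T = \lambda^{-1} M R X^{r-1}$ for the quantity appearing in the conclusion, I expect to obtain
\begin{align*}
	\Big| \int_{\mathbf{R}^d} \psi e^{i\phi} \Big|
	\lesssim_n V (\lambda R)^{-n} \langle T \rangle^n
	= V \Big( \frac{\lambda R}{\langle T \rangle} \Big)^{-n}
\end{align*}
for every $n \in \mathbf{N}$, which combined with the trivial bound $\|\psi\|_{L^1} \lesssim V$ gives the proposition.

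For the main estimate, fix $n$ and apply \eqref{eqn:D_phi_parts}, so that $|\int \psi e^{i\phi}| \leq \|\mathcal{D}_\phi^n \psi\|_{L^1}$; note $\mathcal{D}_\phi^n \psi$ is supported in $\supp \psi$, which is where the hypotheses on $\phi$ live. Into the pointwise bound of Lemma \ref{lem:D_phi_general} I substitute $|\nabla\phi| \gtrsim \lambda$ and $|\nabla^j \phi| \lesssim_j M X^{r+1-j}$ for $j \geq 2$. A term indexed by $k$ and $j_1, \dots, j_k \geq 2$ with $j_1 + \cdots + j_k \leq n+k$ is then pointwise $\lesssim_n \lambda^{-(n+k)} M^k X^{kr - n + m} |\nabla^m \psi|$, where $m = n + k - (j_1 + \cdots + j_k)$; since each $j_i \geq 2$ we have $0 \leq m \leq n - k$ (so also $k \leq n$). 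Integrating and using $\|\nabla^m \psi\|_{L^1} \lesssim_m V R^{-m}$, this contributes $\lesssim_n V \lambda^{-(n+k)} M^k X^{kr-n+m} R^{-m}$. Here the hypothesis $R \lesssim X$ enters: it makes $X^m R^{-m}$ maximal, up to an $n$-dependent constant, at the endpoint $m = n - k$, so after summing over the boundedly many admissible $m$ we are left with $\lesssim_n V \lambda^{-(n+k)} M^k X^{k(r-1)} R^{k-n}$. Summing over $0 \leq k \leq n$ and pulling out $V(\lambda R)^{-n}$ turns the sum into $\sum_{k=0}^n T^k$, and since $\sum_{k=0}^n t^k \lesssim_n \langle t \rangle^n$ for $t \geq 0$, the displayed bound follows.

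Finally, combining the displayed bound with $|\int \psi e^{i\phi}| \leq \|\psi\|_{L^1} \lesssim V$ yields $|\int \psi e^{i\phi}| \lesssim_n V \min\{1, (\lambda R / \langle T \rangle)^{-n}\} \lesssim_n V \langle \lambda R/\langle T \rangle \rangle^{-n}$ for all $n$, which is the claimed estimate. The main obstacle is purely bookkeeping: one must track how the powers of $X, R, \lambda, M$ recombine through Lemma \ref{lem:D_phi_general} and check that $R \lesssim X$ is precisely the hypothesis that makes the most-$\psi$-derivatives term dominate. There is no analytic difficulty beyond the non-stationary phase integration by parts already encoded in \eqref{eqn:D_phi_parts} and Lemma \ref{lem:D_phi_general}.
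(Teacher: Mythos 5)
Your proposal is correct and follows essentially the same route as the paper: iterate $\mathcal{D}_\phi$ via \eqref{eqn:D_phi_parts}, plug the hypotheses into Lemma \ref{lem:D_phi_general}, use $R \lesssim X$ to see that the term with the most $\psi$-derivatives (equivalently $j_1+\cdots+j_k = 2k$) dominates, and recognize the resulting geometric sum in $\lambda^{-1}MRX^{r-1}$, with the trivial bound $\|\psi\|_{L^1} \lesssim V$ supplying the outer Japanese bracket. The only difference is cosmetic bookkeeping (you index by $m = n+k-\sum j_i$ and add the trivial bound at the end, whereas the paper strips the outer bracket first).
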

	
	The model scenario is when $\psi$ is essentially constant at scales $\ll R$, the total mass of $\psi$ is $\lesssim V$, the support of $\psi$ is contained in $\{|x| \sim X\}$, and $\phi$ is a linear function plus the dot product of $m$ with an $(r+1)$-pseudohomogeneous diffeomorphism, where $m \in \mathbf{R}^d$ is a fixed vector of magnitude $\lesssim M$ (e.g. \eqref{eqn:phi_exmp}).
	
	\begin{proof}
		The trivial bound is $V$, so we can remove the outer angle brackets, and we just need to prove that
		\begin{align*}
			\Big|\int_{\mathbf{R}^d} \psi e^{i\phi}\Big|
			\lesssim V \Big(\frac{\langle \lambda^{-1} M R X^{r-1} \rangle}{\lambda R}\Big)^{\infty}.
		\end{align*}
		By \eqref{eqn:D_phi_parts}, Lemma \ref{lem:D_phi_general}, and the assumed derivative bounds,
		\begin{align*}
			\Big|\int \psi e^{i\phi}\Big|
			\lesssim_n V \sum_{k=0}^{n} \frac{1}{\lambda^{n+k}} \sum_{\substack{j_1,\dots,j_k \geq 2 \\ j_1 + \cdots + j_k \leq n+k}} M^k X^{k(r+1) - j_1 - \cdots - j_k} R^{-n-k + j_1 + \cdots + j_k}
		\end{align*}
		for any $n \in \mathbf{N}$. Since $R \lesssim X$, the inner sum is dominated by the term where $j_1 + \cdots + j_k$ is as small as possible, namely $2k$. Therefore
		\begin{align*}
			\Big|\int \psi e^{i\phi}\Big|
			&\lesssim_n V \sum_{k=0}^{n} \frac{1}{\lambda^{n+k}} M^k X^{k(r-1)} R^{-n+k}
			\\&= V \Big(\frac{1}{\lambda R}\Big)^n \sum_{k=0}^{n} (\lambda^{-1} M R X^{r-1})^k
			\\&\lesssim_n V \Big(\frac{\langle \lambda^{-1} M R X^{r-1} \rangle}{\lambda R}\Big)^n.
			\qedhere
		\end{align*}
	\end{proof}
	
	\subsection{Injectivity of $\mathcal{F}_{A,B}$: balanced density case} \label{subsec:balanced}
	
	We first prove Theorem \ref{thm:super_end} in the special case where $p = q = 1$, so $A,B$ have comparable density. This allows for some technical simplifications, although the main ideas are the same in the general case.
	
	By composing $\sigma_A,\sigma_B$ with $x \mapsto (-x_1,x_2,\dots,x_d)$ if necessary, we may assume $\sigma_A,\sigma_B$ are orientation-preserving.
	
	We proceed as in the proof of Theorem \ref{thm:super_non-end}, but instead of using Lemma \ref{lem:full_mu} to define $\mu_A$ and $\mu_B$, we explicitly put
	\begin{align*}
		\mu_A = \sum_{n \in \mathbf{Z}^d} \delta^d \det D\sigma_A(n) \Dirac_{\delta \sigma_A(n)}
		\qquad \text{and} \qquad
		\mu_B = \sum_{n \in \mathbf{Z}^d} \delta^d \det D\sigma_B(n) \Dirac_{\delta \sigma_B(n)}
	\end{align*}
	(we write $\Dirac_x$ instead of $\delta_x$ for a Dirac mass at $x$ to avoid overloading the symbol $\delta$).
	
	\begin{lem}[Twisted Poisson summation formula] \label{lem:Poisson_twisted}
		Let $\sigma \colon \mathbf{R}^d \to \mathbf{R}^d$ be an orientation-preserving $r$-pseudohomogeneous diffeomorphism for some $r > 0$. Then for all Schwartz functions $f \in \mathcal{S}(\mathbf{R}^d)$,
		\begin{align*}
			\sum_{n \in \mathbf{Z}^d} \det D\sigma(n) f(\sigma(n))
			= \sum_{m \in \mathbf{Z}^d} \int_{\mathbf{R}^d} f(x) e^{-2\pi im \cdot \sigma^{-1}(x)} \d x.
		\end{align*}
	\end{lem}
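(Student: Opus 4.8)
The plan is to reduce the identity to the classical Poisson summation formula on $\mathbf{R}^d$, applied to the auxiliary function $g \colon \mathbf{R}^d \to \mathbf{C}$ defined by $g(y) = \det D\sigma(y)\, f(\sigma(y))$. Assuming for the moment that $g \in \mathcal{S}(\mathbf{R}^d)$, Poisson summation yields $\sum_{n \in \mathbf{Z}^d} g(n) = \sum_{m \in \mathbf{Z}^d} \widehat{g}(m)$, with both sides converging absolutely. The left-hand side is literally $\sum_{n \in \mathbf{Z}^d} \det D\sigma(n)\, f(\sigma(n))$, which is the left-hand side of the lemma. For the right-hand side I would write $\widehat{g}(m) = \int_{\mathbf{R}^d} \det D\sigma(y)\, f(\sigma(y))\, e^{-2\pi i m \cdot y}\, dy$ and make the change of variables $x = \sigma(y)$. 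Because $\sigma$ is an orientation-preserving diffeomorphism of $\mathbf{R}^d$, the Jacobian $\det D\sigma(y)$ is everywhere positive, so $dx = \det D\sigma(y)\, dy$ and $y = \sigma^{-1}(x)$; this transforms $\widehat{g}(m)$ into $\int_{\mathbf{R}^d} f(x)\, e^{-2\pi i m \cdot \sigma^{-1}(x)}\, dx$, which is exactly the $m$-th summand on the right-hand side of the lemma. Summing over $m \in \mathbf{Z}^d$ completes the proof; note in passing that this also shows the right-hand side converges absolutely, since its $m$-th term equals $\widehat{g}(m)$ and $g$ is Schwartz.

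It remains to verify that $g \in \mathcal{S}(\mathbf{R}^d)$, which is the only step where the pseudohomogeneity hypotheses are used. The determinant $\det D\sigma$ is a fixed polynomial in the first partials $\partial_j \sigma_i$, and the derivative bounds $|\nabla^k \sigma(y)| \lesssim_k \langle y \rangle^{r-k}$ (used with $k \geq 1$, together with $\langle y \rangle \geq 1$) show that $\partial_j \sigma_i$ and every derivative thereof is bounded by $\langle y \rangle^{r-1}$; by the Leibniz rule, $\det D\sigma$ and all of its derivatives then grow at most polynomially in $y$. On the other hand, since $\langle \sigma(y) \rangle \sim \langle y \rangle^r$ with $r > 0$ and $f$ is Schwartz, we have $|f(\sigma(y))| \lesssim_N \langle \sigma(y) \rangle^{-N} \sim \langle y \rangle^{-rN}$ for every $N$, and by the multivariate Fa\`a di Bruno formula each derivative of $f \circ \sigma$ is a finite sum of products of derivatives of $f$ evaluated at $\sigma(y)$ (which decay faster than any polynomial in $y$) with derivatives of $\sigma$ (which grow at most polynomially), hence also decays faster than any polynomial. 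Multiplying the polynomially bounded factor $\det D\sigma$ by the rapidly decaying factor $f \circ \sigma$ and applying the Leibniz rule once more shows that $g$ and all its derivatives decay faster than any polynomial, i.e. $g \in \mathcal{S}(\mathbf{R}^d)$.

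I expect the Schwartz verification to be the only place that requires any care, and even there the argument is routine; in particular it is essential that $r > 0$, for otherwise $f \circ \sigma$ need not decay. The orientation-preserving assumption is used purely to remove the absolute value from the Jacobian in the change of variables, which is what allows the weights $\det D\sigma(n)$ to appear unsigned in the statement (and, later, in the measures $\mu_A$ and $\mu_B$). Everything else is the standard Poisson summation formula combined with the change-of-variables formula.
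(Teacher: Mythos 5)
Your proposal is correct and follows essentially the same route as the paper: apply classical Poisson summation to $(\det D\sigma)\,(f\circ\sigma)$ and then change variables $x=\sigma(y)$, with the orientation-preserving hypothesis removing the absolute value on the Jacobian. The only difference is that you spell out the (routine) verification that $(\det D\sigma)\,(f\circ\sigma)$ is Schwartz, which the paper merely remarks is the sole role of the pseudohomogeneity hypothesis.
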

	
	The pseudohomogeneity condition could be relaxed --- it is only there to ensure that $f \circ \sigma$ is Schwartz.
	
	\begin{proof}
		By Poisson summation and the change of variables formula,
		\begin{align*}
			\sum_{n \in \mathbf{Z}^d} \det D\sigma(n) f(\sigma(n))
			= \sum_{m \in \mathbf{Z}^d} \widehat{(\det D\sigma) f \circ \sigma}(m)
			&= \sum_{m \in \mathbf{Z}^d} \int_{\mathbf{R}^d} f(\sigma(x)) e^{-2\pi im \cdot x} \det D\sigma(x) \d x
			\\&= \sum_{m \in \mathbf{Z}^d} \int_{\mathbf{R}^d} f(x) e^{-2\pi im \cdot \sigma^{-1}(x)} \d x.
			\qedhere
		\end{align*}
	\end{proof}
	
	Let
	\begin{align*}
		\nu_{y,\eta} =
		\begin{cases}
			\chi_{y,\eta} \mu_A &\mbox{if } \langle y \rangle \geq \langle \eta \rangle, \\
			(\widehat{\chi_{y,\eta}} \mu_B)^{\vee} &\mbox{if } \langle y \rangle < \langle \eta \rangle.
		\end{cases}
	\end{align*}
	
	By the same parametrix iteration procedure as in Section \ref{subsec:injectivity_non-end}, one deduces injectivity of $\mathcal{F}_{A,B}$ from the following proposition.
	
	\begin{prop}
		Assume $s$ is sufficiently large, and $\delta$ is sufficiently small depending on $s$. Then
		\begin{align}\label{eqn:nu_approx_balanced}
			\|\nu_{y,\eta} - \chi_{y,\eta}\|_{\mathcal{S}^{-s}} \leq \frac{1}{2} \langle y \rangle^{-s} \langle \eta \rangle^{-s}.
		\end{align}
	\end{prop}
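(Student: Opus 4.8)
\emph{Plan.} The plan is to run the same parametrix argument as in Section~\ref{subsec:injectivity_non-end}, so everything reduces to proving \eqref{eqn:nu_approx_balanced}, and by Fourier symmetry (exactly as in \eqref{eqn:fourier_reduction}, using that the Fourier transform is bounded on $\mathcal{S}^{-s}(\mathbf{R}^d)$) it suffices to treat the case $\langle y \rangle \geq \langle \eta \rangle$, where $\nu_{y,\eta} = \chi_{y,\eta}\mu_A$. As in the proof of Proposition~\ref{prop:nu_approx}, I would get pointwise control on the Gabor transform
\[
T(\nu_{y,\eta} - \chi_{y,\eta})(z,\zeta) = \int \chi_{y,\eta}\overline{\chi_{z,\zeta}}\,d\mu_A - \int_{\mathbf{R}^d}\chi_{y,\eta}\overline{\chi_{z,\zeta}},
\]
and then integrate it against $\langle z \rangle^{-s}\langle \zeta \rangle^{-s}$. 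The new ingredient, replacing Lemma~\ref{lem:full_mu}, is Lemma~\ref{lem:Poisson_twisted}: applying the twisted Poisson summation formula to $\sigma_A$ and the function $x \mapsto \chi_{y,\eta}(\delta x)\overline{\chi_{z,\zeta}(\delta x)}$ and then rescaling gives
\[
T(\nu_{y,\eta} - \chi_{y,\eta})(z,\zeta) = \sum_{0 \neq m \in \mathbf{Z}^d} I_m(z,\zeta), \qquad I_m(z,\zeta) = \int_{\mathbf{R}^d} \psi(x) e^{i\phi_m(x)}\,dx,
\]
where $\psi(x) = \chi(x-y)\overline{\chi(x-z)}$ is a product of standard bumps (so $\|\nabla^k\psi\|_{L^1} \lesssim_k \langle y - z \rangle^{-\infty}$) and $\phi_m(x) = 2\pi(\eta - \zeta)\cdot x - 2\pi m \cdot \sigma_A^{-1}(x/\delta)$; the $m = 0$ term is exactly $\int \chi_{y,\eta}\overline{\chi_{z,\zeta}}$, which is why it cancels.

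Next I would estimate each $I_m$ by non-stationary phase via Proposition~\ref{prop:osc_int_main}. Since $\sigma_A$ is $\tfrac12$-pseudohomogeneous, $\sigma_A^{-1}$ is $2$-pseudohomogeneous, so $|\nabla^k(m\cdot\sigma_A^{-1}(x/\delta))| \lesssim_k |m|\delta^{-k}\langle x/\delta\rangle^{2-k}$; on $\supp\psi \subseteq B_{O(1)}(y)$, provided $\langle y \rangle$ exceeds a large constant, $\langle x/\delta\rangle \sim \delta^{-1}\langle y \rangle$, and these bounds become $\lesssim_k |m|\delta^{-2}\langle y\rangle^{2-k}$ — precisely the hypothesis of Proposition~\ref{prop:osc_int_main} with $M = |m|\delta^{-2}$, $X = \langle y\rangle$, $r = 1$, $R = 1$, $V = \langle y - z\rangle^{-\infty}$. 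Writing $\nabla\phi_m(x) = 2\pi\big(\eta - \zeta - \lambda_m(x)\big)$ with $\lambda_m(x) := \nabla_x\big(m\cdot\sigma_A^{-1}(x/\delta)\big)$: pseudohomogeneity forces all singular values of $D\sigma_A^{-1}(v)$ to be $\sim \langle v\rangle$, so $\lambda_m := \lambda_m(y)$ has $|\lambda_m| \sim \delta^{-1}\langle y/\delta\rangle|m| \sim \delta^{-2}\langle y\rangle|m|$, and these are exactly the dual-lattice vectors from the outline in Section~\ref{subsubsec:super_end}; moreover $|\nabla^2(m\cdot\sigma_A^{-1}(x/\delta))| \lesssim |m|\delta^{-2}$, so $\lambda_m(x) = \lambda_m + O(|m|\delta^{-2})$ over $\supp\psi$. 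Hence, when $|\eta - \zeta - \lambda_m| \gtrsim |m|\delta^{-2}$ with a suitably large implied constant, Proposition~\ref{prop:osc_int_main} gives $|I_m(z,\zeta)| \lesssim \langle y - z\rangle^{-\infty}\langle \eta - \zeta - \lambda_m\rangle^{-\infty}$, while for the remaining $\zeta$ the triangle inequality together with the fact that $\mu_A$ has total variation $O(1)$ on unit balls gives the trivial bound $|I_m(z,\zeta)| \lesssim \langle y - z\rangle^{-\infty}$, exactly as in \eqref{eqn:T(error)_triv}.

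Then I would substitute these into $\|\nu_{y,\eta} - \chi_{y,\eta}\|_{\mathcal{S}^{-s}} = \int |T(\nu_{y,\eta}-\chi_{y,\eta})(z,\zeta)|\,\langle z\rangle^{-s}\langle\zeta\rangle^{-s}\,dz\,d\zeta \leq \sum_{m \neq 0}\int |I_m|\,\langle z\rangle^{-s}\langle\zeta\rangle^{-s}$. The $z$-integral always contributes $\langle y\rangle^{-s}$. For the $\zeta$-integral the key point — this is the mechanism described around \eqref{eqn:nu-chi_portrait_sharp} — is that on the support of either bound for $I_m$ one has $\langle\zeta\rangle \sim |\lambda_m| \sim \delta^{-2}\langle y\rangle|m|$ (using $\langle\eta\rangle \leq \langle y\rangle \ll |\lambda_m|$), and the "trivial" region near $\eta + \lambda_m$ has volume only $O((|m|\delta^{-2})^d)$, a bound \emph{independent of $y$}. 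This gives $\int |I_m|\,\langle z\rangle^{-s}\langle\zeta\rangle^{-s} \lesssim \delta^{2s-2d}\langle y\rangle^{-2s}|m|^{d-s}$; summing over $m \neq 0$ (convergent once $s > 2d$) and using $\langle y\rangle^{-2s} \leq \langle y\rangle^{-s}\langle\eta\rangle^{-s}$ yields $\|\nu_{y,\eta} - \chi_{y,\eta}\|_{\mathcal{S}^{-s}} \lesssim \delta^{2s-2d}\langle y\rangle^{-s}\langle\eta\rangle^{-s}$, which is $\leq \tfrac12\langle y\rangle^{-s}\langle\eta\rangle^{-s}$ once $\delta$ is small depending on $s$.

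\emph{Main obstacle.} The hard part — and the one place the argument above cheats — is the regime where $\langle y\rangle$ is bounded: there $\supp\psi$ may reach near the origin, where $\langle x/\delta\rangle \sim 1$ rather than $\sim \delta^{-1}$, so the derivative bounds $|\nabla^k\phi_m| \lesssim_k |m|\delta^{-2}\langle y\rangle^{2-k}$ break for $k \geq 3$ and Proposition~\ref{prop:osc_int_main} no longer applies directly. However, here one only needs the very weak bound $\|\nu_{y,\eta}-\chi_{y,\eta}\|_{\mathcal{S}^{-s}} \lesssim \delta$ (as $\langle y\rangle^{-s}\langle\eta\rangle^{-s} \sim 1$), so a crude argument suffices: Taylor-expanding $\sigma_A$ about the relevant integer points replaces $\mu_A$ locally, up to a curvature error of size $O(\delta^2)$, by $\delta^d\det D\sigma_A(n_0)$ times counting measure on the lattice $\delta\, D\sigma_A(n_0)\mathbf{Z}^d$ of covolume $\sim\delta^d$; ordinary Poisson summation for this lattice produces only frequencies $\gtrsim \delta^{-1}$, far beyond the essential Fourier supports of $\chi_{y,\eta}$ and of $\chi_{z,\zeta}$ for the relevant $z,\zeta$, so the error is $O(\delta)$ in $\mathcal{S}^{-s}$. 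Combining the two regimes establishes \eqref{eqn:nu_approx_balanced}, and the parametrix iteration of Section~\ref{subsec:injectivity_non-end} then gives injectivity of $\mathcal{F}_{A,B}$.
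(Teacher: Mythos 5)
Your treatment of the main regime $\langle y\rangle\gtrsim 1$ is essentially the paper's argument: twisted Poisson summation (Lemma \ref{lem:Poisson_twisted}) converts $T(\nu_{y,\eta}-\chi_{y,\eta})$ into a sum over $m\neq 0$ of oscillatory integrals, the dual-lattice frequencies $\lambda_m=\delta^{-1}D\tau_A(y/\delta)^Tm$ are identified, and a resonant/non-resonant split in $\zeta$ together with the $y$-independent volume $O((|m|\delta^{-2})^d)$ of the resonant ball reproduces the effect of \eqref{eqn:end_portrait_main}; your final integration and $m$-summation are correct. Two caveats there: your claim $\supp\psi\subseteq B_{O(1)}(y)$ is false for the paper's Schwartz window $\chi$ (so the gradient lower bound does not hold on all of $\supp\psi$, and remote stationary points in the Schwartz tails spoil the clean bound $\langle\eta-\zeta-\lambda_m\rangle^{-\infty}$); this is fixable either by choosing $\chi$ compactly supported (legitimate, since Gabor inversion only uses $\|\chi\|_{L^2}=1$) or by inserting the cutoffs and trivial bounds on the complementary regions as the paper does.

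The genuine gap is your handling of the bounded-$\langle y\rangle$ regime. The proposed fallback --- linearize $\sigma_A$ near each relevant $n_0$, replace $\mu_A$ ``locally'' by a weighted lattice counting measure, and invoke ordinary Poisson summation to conclude that only frequencies $\gtrsim\delta^{-1}$ appear --- does not work as stated: Poisson summation cannot be applied to a sharply truncated (or locally linearized) piece of the sum, since the Fourier transform of a truncated lattice sum is not concentrated near the dual lattice, so the assertion ``produces only frequencies $\gtrsim\delta^{-1}$'' is unjustified, and making it rigorous (smooth partitions of unity, control of both the linearization and localization errors) essentially amounts to redoing the global oscillatory-integral analysis you already have. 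Moreover, no separate case is needed. In the paper, for $|\zeta|\lesssim\delta^{-\varepsilon}\langle y\rangle^{1+\varepsilon}$ one splits on the size of $|m|$: when $|m|\gtrsim\delta^{1-2\varepsilon}\langle y\rangle^{1+2\varepsilon}$ the term $D\tau_A(x)^Tm$ dominates $\delta(\zeta-\eta)$ for \emph{every} $x$, so $|\nabla\phi(x)|\gtrsim|m|\langle x\rangle$ globally and Corollary \ref{cor:trivial_non-stationary} --- whose hypotheses are pointwise in $x$ and degrade gracefully near the origin, unlike your uniform bounds $|\nabla^k\phi_m|\lesssim|m|\delta^{-2}\langle y\rangle^{2-k}$ --- gives \eqref{eqn:end_portrait_large_m} with no localization; since $|m|\geq1$, the complementary small-$m$ regime is empty unless $|y|\gg\delta^{-0.9}$, so the bounded-$y$ case is subsumed automatically (large $|\zeta|$ being handled by the collective trivial bound \eqref{eqn:end_portrait_triv_bd}). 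Repair your argument along those lines rather than via the local-lattice sketch.
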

	
	\begin{proof}
		First suppose $\langle y \rangle \geq \langle \eta \rangle$, so $\nu_{y,\eta} = \chi_{y,\eta} \mu_A$. We need pointwise bounds on
		\begin{align}\label{eqn:end_portrait}
			|T(\nu_{y,\eta} - \chi_{y,\eta})(z,\zeta)|
			= |\langle \nu_{y,\eta} - \chi_{y,\eta}, \chi_{z,\zeta} \rangle|
			= \Big|\int \chi_{y,\eta} \overline{\chi_{z,\zeta}} \d\mu_A - \int_{\mathbf{R}^d} \chi_{y,\eta} \overline{\chi_{z,\zeta}}\Big|.
		\end{align}
		When $|\zeta| \gtrsim \delta^{-\varepsilon} \langle y \rangle^{1+\varepsilon}$ (for fixed $\varepsilon > 0$), the trivial bound
		\begin{align}\label{eqn:end_portrait_triv_bd}
			|T(\nu_{y,\eta} - \chi_{y,\eta})(z,\zeta)|
			\lesssim \langle y - z \rangle^{-\infty}
		\end{align}
		will suffice, so suppose $|\zeta| \lesssim \delta^{-\varepsilon} \langle y \rangle^{1+\varepsilon}$. By Lemma \ref{lem:Poisson_twisted} and the definition of $\mu_A$, we can rewrite \eqref{eqn:end_portrait} as
		\begin{align*}
			|T(\nu_{y,\eta} - \chi_{y,\eta})(z,\zeta)|
			= \Big|\sum_{0 \neq m \in \mathbf{Z}^d} \int_{\mathbf{R}^d} \chi_{y,\eta}(x) \overline{\chi_{z,\zeta}(x)} e^{-2\pi im \cdot \sigma_A^{-1}(\delta^{-1}x)} \d x \Big|.
		\end{align*}
		Rescaling by $\delta$,
		\begin{align*}
			|T(\nu_{y,\eta} - \chi_{y,\eta})(z,\zeta)|
			= \delta^d \Big|\sum_{m \neq 0} \int_{\mathbf{R}^d} \chi_{y,\eta}(\delta x) \overline{\chi_{z,\zeta}(\delta x)} e^{-2\pi im \cdot \sigma_A^{-1}(x)} \d x\Big|.
		\end{align*}
		Denote $\tau_A = \sigma_A^{-1}$. Then $\tau_A$ is $2$-pseudohomogeneous. Expanding the definitions of $\chi_{y,\eta}$ and $\chi_{z,\zeta}$,
		\begin{align*}
			|T(\nu_{y,\eta} - \chi_{y,\eta})(z,\zeta)|
			= \delta^d \Big|\sum_{m \neq 0} \int_{\mathbf{R}^d} \chi(\delta x - y) \overline{\chi(\delta x - z)} e^{-2\pi i(\delta(\zeta - \eta) \cdot x + m \cdot \tau_A(x))} \d x\Big|.
		\end{align*}
		Let
		\begin{align*}
			I = \Big|\int_{\mathbf{R}^d} \chi(\delta x - y) \overline{\chi(\delta x - z)} e^{-2\pi i \phi(x)} \d x\Big|
		\end{align*}
		denote the absolute value of the integral, where
		\begin{align} \label{eqn:phi_exmp}
			\phi(x) = \delta(\zeta-\eta) \cdot x + m \cdot \tau_A(x)
		\end{align}
		is the phase. The gradient of the phase is
		\begin{align}\label{eqn:grad_phase_balanced}
			\nabla \phi(x)
			= \delta(\zeta - \eta) + D\tau_A(x)^T m.
		\end{align}
		When $|m| \gtrsim \delta^{1-2\varepsilon} \langle y \rangle^{1+2\varepsilon}$, it follows from the pseudohomogeneity bound on $|D\tau_A(x)^{-1}|$ and the inequalities $\langle \eta \rangle \leq \langle y \rangle$ and $|\zeta| \lesssim \delta^{-\varepsilon} \langle y \rangle^{1+\varepsilon}$ that the term $D\tau_A(x)^Tm$ dominates, so the norm of the gradient of the phase is $\gtrsim |m|\langle x \rangle$, and non-stationary phase (Corollary \ref{cor:trivial_non-stationary}) gives
		\begin{align}\label{eqn:end_portrait_large_m}
			I \lesssim |m|^{-\infty} \langle y \rangle^{-\infty} \langle z \rangle^{-\infty}.
		\end{align}
		Suppose instead that $|m| \lesssim \delta^{1-2\varepsilon} \langle y \rangle^{1+2\varepsilon}$. Since $m$ is integral and nonzero, $|m| \geq 1$, so we must have $|y| \gg \delta^{-0.9}$ (the exponent $0.9$ could be made arbitrarily close to $1$ by decreasing $\varepsilon$, but this is unnecessary).
		Now for $x$ near $\delta^{-1}y$, we can bound $|\nabla\phi(x)|$ below by $|\nabla\phi(\delta^{-1}y)|$. By the fundamental theorem of calculus and the $2$-pseudohomogeneity of $\tau_A$,
		\begin{align*}
			|\nabla\phi(x) - \nabla\phi(\delta^{-1}y)|
			= |D\tau_A(x)^Tm - D\tau_A(\delta^{-1}y)^Tm|
			\lesssim |m| |x - \delta^{-1}y|.
		\end{align*}
		Thus if
		\begin{align*}
			|x-\delta^{-1}y|
			\ll |m|^{-1} |\nabla\phi(\delta^{-1} y)|,
		\end{align*}
		then
		\begin{align*}
			|\nabla\phi(x) - \nabla\phi(\delta^{-1}y)|
			\ll |\nabla\phi(\delta^{-1}y)|
		\end{align*}
		and hence
		\begin{align*}
			|\nabla\phi(x)| \sim |\nabla\phi(\delta^{-1}y)|.
		\end{align*}
		Using Proposition \ref{prop:osc_int_main} (non-stationary phase) on the region
		\begin{align*}
			\{x \in \mathbf{R}^d : |x| \sim \delta^{-1}|y| \text{ and } |x-\delta^{-1}y| \ll |m|^{-1} |\nabla\phi(\delta^{-1}y)|\}
		\end{align*}
		with parameters
		\begin{align*}
			X &= \delta^{-1} |y|, \\
			R &= \min\{\delta^{-1}, |m|^{-1} |\nabla\phi(\delta^{-1}y)|\}, \\
			V &= \delta^{-d} \langle y-z \rangle^{-\infty}, \\
			\lambda &= |\nabla\phi(\delta^{-1}y)|, \\
			M &= |m|, \\
			r &= 1
		\end{align*}
		(the second argument in the minimum in the definition of $R$ is necessary to capture the derivative bounds of the implicit smooth cutoff to $\{|x-\delta^{-1}y| \ll |m|^{-1} |\nabla\phi(\delta^{-1}y)|\}$), and using the trivial bound on the complementary region, we obtain the estimate
		\begin{align*}
			I \lesssim V \Big\langle \frac{\lambda R}{\langle \lambda^{-1} M R X^{r-1} \rangle} \Big\rangle^{-\infty} + \delta^{-d} \langle y-z \rangle^{-\infty} \langle \delta |m|^{-1} \nabla \phi(\delta^{-1}y) \rangle^{-\infty}
			+ |y|^{-\infty} \langle z \rangle^{-\infty}.
		\end{align*}
		With our choice of parameters, the second term on the right hand side dominates the other two, so
		\begin{align}\label{eqn:end_portrait_main}
			I \lesssim \delta^{-d} \langle y-z \rangle^{-\infty} \langle \delta |m|^{-1} \nabla\phi(\delta^{-1}y) \rangle^{-\infty}
			= \delta^{-d} \langle y-z \rangle^{-\infty} \langle \delta^2 |m|^{-1} (\zeta - \eta + \delta^{-1} D\tau_A(\delta^{-1}y)^Tm) \rangle^{-\infty}.
		\end{align}
		We remark that the dual lattice $L^{\vee}$ mentioned in Section \ref{subsubsec:super_end} is $\delta^{-1} D\tau_A(\delta^{-1}y)^T \mathbf{Z}^d$.
		
		Combining \eqref{eqn:end_portrait_main}, \eqref{eqn:end_portrait_large_m}, and \eqref{eqn:end_portrait_triv_bd}, we find that
		\begin{align*}
			\|\nu_{y,\eta} - \chi_{y,\eta}\|_{\mathcal{S}^{-s}}
			&= \int_{\mathbf{R}^d \times \mathbf{R}^d} |T(\nu_{y,\eta} - \chi_{y,\eta})(z,\zeta)| \langle z \rangle^{-s} \langle \zeta \rangle^{-s} \d z \d\zeta
			\\&\lesssim \int_{\{(z,\zeta) : |\zeta| \lesssim \delta^{-\varepsilon} \langle y \rangle^{1+\varepsilon}\}} \Big[\langle y-z \rangle^{-\infty} \sum_{0 \neq |m| \lesssim \delta^{1-2\varepsilon} \langle y \rangle^{1+2\varepsilon}} \langle \delta^2 |m|^{-1} (\zeta - \eta + \delta^{-1} D\tau_A(\delta^{-1}y)^Tm) \rangle^{-\infty}
			\\&\qquad\qquad\qquad\qquad\qquad + \delta^d \langle y \rangle^{-\infty} \langle z \rangle^{-\infty} \sum_{|m| \gtrsim \delta^{1-2\varepsilon} \langle y \rangle^{1+2\varepsilon}} |m|^{-\infty}\Big] \langle z \rangle^{-s} \langle \zeta \rangle^{-s} \d z \d\zeta
			\\&\qquad + \int_{\{(z,\zeta) : |\zeta| \gtrsim \delta^{-\varepsilon} \langle y \rangle^{1+\varepsilon}\}} \langle y-z \rangle^{-\infty} \langle z \rangle^{-s} \langle \zeta \rangle^{-s} \d z \d\zeta.
		\end{align*}
		When $|y| \lesssim \delta^{-0.9}$, the first line of the right hand side vanishes, and we get
		\begin{align*}
			\|\nu_{y,\eta} - \chi_{y,\eta}\|_{\mathcal{S}^{-s}}
			\lesssim_s \delta^d \langle y \rangle^{-\infty} + \delta^{\varepsilon(s-d)} \langle y \rangle^{-(1+\varepsilon)(s-d)} \langle y \rangle^{-s}.
		\end{align*}
		When $|y| \gtrsim \delta^{-0.9}$, it follows from the $2$-pseudohomogeneity of $\tau_A$ and the fact that $\langle \eta \rangle \leq \langle y \rangle$ that
		\begin{align*}
			|\delta^{-1}D\tau_A(\delta^{-1}y)^Tm - \eta|
			\sim \delta^{-1} |D\tau_A(\delta^{-1}y)^Tm|
			\sim \delta^{-2}|m| \langle y \rangle.
		\end{align*}
		Therefore
		\begin{align*}
			\|\nu_{y,\eta} - \chi_{y,\eta}\|_{\mathcal{S}^{-s}}
			&\lesssim_s \sum_{0 \neq |m| \lesssim \delta^{1-2\varepsilon} \langle y \rangle^{1+2\varepsilon}} (\delta^{-2} |m|)^d (\delta^{-2} |m|\langle y \rangle)^{-s} \langle y \rangle^{-s}
			+ \delta^d \langle y \rangle^{-\infty}
			+ \delta^{\varepsilon(s-d)} \langle y \rangle^{-(1+\varepsilon)(s-d)} \langle y \rangle^{-s}
			\\&\lesssim_s \delta^{2(s-d)} \langle y \rangle^{-2s} + \delta^d \langle y \rangle^{-\infty} + \delta^{\varepsilon(s-d)} \langle y \rangle^{-(2+\varepsilon)s + (1+\varepsilon)d}.
		\end{align*}
		In both cases $|y| \lesssim \delta^{-0.9}$ and $|y| \gtrsim \delta^{-0.9}$, using once again that $\langle \eta \rangle \leq \langle y \rangle$, we conclude the desired bound \eqref{eqn:nu_approx_balanced} when $s$ is large and $\delta$ is sufficiently small depending on $s$.
		
		So far we have only considered the situation where $\langle y \rangle \geq \langle \eta \rangle$. As in the proof of Proposition \ref{prop:nu_approx} though, the case $\langle y \rangle < \langle \eta \rangle$ reduces to an estimate which is analogous to what we have already shown when $\langle y \rangle \geq \langle \eta \rangle$.
	\end{proof}
	
	\subsection{A variably rescaled Gabor transform} \label{subsec:modified_Gabor}
	
	As discussed at the end of Section \ref{subsubsec:super_end}, for the proof of Theorem \ref{thm:super_end} in general, we need to use a phase space basis $(\rho_{y,\eta})_{y,\eta \in \mathbf{R}^d}$, where $\rho_{y,\eta}$ is a wave packet adapted to $B_{R_{y,\eta}}(y) \times B_{R_{y,\eta}^{-1}}(\eta)$ in phase space, and where we have some flexibility to choose the radii $R_{y,\eta}$. Proposition \ref{prop:variably_rescaled_Gabor} below describes a modification of the Gabor transform which implements the decomposition of a function in this basis.
	
	A simple case, which we will use in the proof of the proposition, is when $R_{y,\eta} = R$ is constant. 
	Fix an $L^2$-normalized Schwartz function $\chi_R$ adapted to the ball $\{|x| \lesssim R\}$. Let $\chi_R$ have $L^2$ norm exactly $1$. For $y,\eta \in \mathbf{R}^d$, set
	\begin{align*}
		\chi_{R,y,\eta}(x) = \chi_R(x-y) e^{2\pi i\eta \cdot x}.
	\end{align*}
	Define a rescaled Gabor transform $T_R$ in terms of $\chi_R$ exactly as in Section \ref{subsec:Gabor}, so
	\begin{align*}
		T_Rf(y,\eta) = \langle f,\chi_{R,y,\eta} \rangle.
	\end{align*}
	The proof of the Gabor inversion formula (Proposition \ref{prop:Gabor_inversion}) only depended on $\chi$ having $L^2$ norm $1$, so the same inversion formula holds for $T_R$:
	\begin{align}\label{eqn:rescaled_Gabor_inversion}
		f = \int_{\mathbf{R}^d \times \mathbf{R}^d} T_Rf(y,\eta) \chi_{R,y,\eta} \d y \d\eta.
	\end{align}
	This achieves the goal of decomposing $f$ into wave packets $\chi_{R,y,\eta}$ adapted to $B_R(y) \times B_{R^{-1}}(\eta)$ in phase space.
	
	%
	%
	
	\begin{rem} \label{rem:R_y,eta}
		In the balanced density case $p = q = 1$, the ordinary Gabor transform was sufficient. This should be viewed as a lucky coincidence. In general, we need to choose $R_{y,\eta}$ carefully when $\langle y \rangle^{p^{1/2}} \sim \langle \eta \rangle^{q^{1/2}}$, in which case the optimal choice is
		\begin{align} \label{eqn:R_y_eta_key_regime}
			R_{y,\eta} \sim \langle y \rangle^{\frac{1-p}{2}}
			\qquad \text{and} \qquad
			R_{y,\eta}^{-1} \sim \langle \eta \rangle^{\frac{1-q}{2}}
		\end{align}
		(note $\langle y \rangle^{\frac{1-p}{2}} \langle \eta \rangle^{\frac{1-q}{2}} \sim 1$ because $\langle y \rangle^{p^{1/2}} \sim \langle \eta \rangle^{q^{1/2}}$ and $pq = 1$). When $p = q = 1$, the right hand sides in \eqref{eqn:R_y_eta_key_regime} reduce to $1$, so the ordinary Gabor transform works.
		
		In the proof of Theorem \ref{thm:super_end}, we will take
		\begin{align} \label{eqn:R_y_eta_def}
			R_{y,\eta} = \frac{\langle y \rangle^{1/2}}{\langle \eta \rangle^{1/2}}
		\end{align}
		for all $y,\eta \in \mathbf{R}^d$. This agrees with \eqref{eqn:R_y_eta_key_regime} when $\langle y \rangle^{p^{1/2}} \sim \langle \eta \rangle^{q^{1/2}}$. One can keep \eqref{eqn:R_y_eta_def} in mind as a model example in the proof of Proposition \ref{prop:variably_rescaled_Gabor}.
	\end{rem}
	
	\begin{prop}\label{prop:variably_rescaled_Gabor}
		For $y,\eta \in \mathbf{R}^d$, let $R_{y,\eta} > 0$ satisfy the sublinear growth/decay property
		\begin{align*}
			R_{y,\eta} \lesssim \langle y \rangle^{1-\varepsilon}
			\qquad \text{and} \qquad
			R_{y,\eta}^{-1} \lesssim \langle \eta \rangle^{1-\varepsilon}
		\end{align*}
		($\varepsilon > 0$ fixed), as well as the radial constancy property
		\begin{align*}
			R_{y,\eta} \sim R_{y',\eta'}
			\qquad \text{whenever} \qquad
			\langle y \rangle \sim \langle y' \rangle \text{ and } \langle \eta \rangle \sim \langle \eta' \rangle.
		\end{align*}
		Then there are $L^2$-normalized vector-valued wave packets $\rho_{y,\eta},\tilde{\rho}_{y,\eta} \in \mathcal{S}(\mathbf{R}^d; \ell^2)$ adapted to $B_{R_{y,\eta}}(y) \times B_{R_{y,\eta}^{-1}}(\eta)$ in phase space such that if $\widetilde{T}$ denotes the modified Gabor transform 
		\begin{align*}
			\widetilde{T}f(y,\eta) = \langle f,\tilde{\rho}_{y,\eta} \rangle
		\end{align*}
		for scalar-valued $f$ (so $\widetilde{T}f$ is vector-valued), then the inversion formula
		\begin{align}\label{eqn:variably_rescaled_inversion}
			f = \int_{\mathbf{R}^d \times \mathbf{R}^d} \widetilde{T}f(y,\eta) \cdot \rho_{y,\eta} \d y \d\eta
		\end{align}
		holds.
	\end{prop}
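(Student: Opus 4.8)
The plan is to build $\rho_{y,\eta}$ and $\tilde{\rho}_{y,\eta}$ by patching together the constant--scale inversion formulas \eqref{eqn:rescaled_Gabor_inversion} over a dyadic decomposition of phase space, letting the $\ell^2$--index absorb the bounded overlap between neighbouring pieces. Since $R_{y,\eta}$ is only assumed to satisfy the bounded--ratio radial constancy property, I would first replace it by a smooth comparable weight $\widetilde{R}_{y,\eta} \sim R_{y,\eta}$, with the expected bounds $|\nabla_y^m \nabla_\eta^l \log \widetilde{R}_{y,\eta}| \lesssim \langle y \rangle^{-m} \langle \eta \rangle^{-l}$, that is literally constant on the core of each dyadic box $Q_{j,k} = \{\langle y \rangle \sim 2^j,\ \langle \eta \rangle \sim 2^k\}$; this is harmless, as ``adapted to $B_{R_{y,\eta}}(y) \times B_{R_{y,\eta}^{-1}}(\eta)$'' depends on $R_{y,\eta}$ only up to constants. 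Write $R_{j,k}$ for the value on $Q_{j,k}$, and note that the sublinear growth/decay hypothesis gives $R_{j,k} \ll 2^j$ and $R_{j,k}^{-1} \ll 2^k$, which is precisely what makes a wave packet of scale $R_{j,k}$ localised in $Q_{j,k}$ behave like a genuine phase--space bump supported in a fixed dilate of $Q_{j,k}$; keeping \eqref{eqn:R_y_eta_def} of Remark \ref{rem:R_y,eta} in mind as the model case is helpful here.

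Now fix a smooth square partition of unity $\sum_{j,k} \Psi_{j,k}(y,\eta)^2 = 1$ subordinate to the $Q_{j,k}$, with the natural derivative bounds. For each $(y,\eta)$ only $O(1)$ of the $\Psi_{j,k}$ are nonzero, all with $R_{j,k} \sim \widetilde{R}_{y,\eta}$, so after relabelling the index by the finitely many possible offsets, the $\ell^2$--vectors
\begin{align*}
\rho_{y,\eta} = \big(\Psi_{j,k}(y,\eta)\,\chi_{R_{j,k},y,\eta}\big)_{j,k}
\qquad \text{and} \qquad
\tilde{\rho}_{y,\eta} = \big(\Psi_{j,k}(y,\eta)\,\chi_{R_{j,k},y,\eta}\big)_{j,k}
\end{align*}
are honest $L^2$--normalized vector--valued wave packets adapted to $B_{R_{y,\eta}}(y) \times B_{R_{y,\eta}^{-1}}(\eta)$, the normalization being immediate from $\sum_{j,k} \Psi_{j,k}(y,\eta)^2 = 1$ and $\|\chi_R\|_{L^2} = 1$. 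Performing the $\eta$--integration box by box exactly as in the proof of Proposition \ref{prop:Gabor_inversion}, one finds that the operator $\mathcal{A}f = \int \langle f, \tilde{\rho}_{y,\eta} \rangle \cdot \rho_{y,\eta} \d y \d\eta$ equals $1 + E$, where $E$ collects the errors from (i) the cutoff $\Psi_{j,k}(y,\eta)$ failing to commute with the modulation $e^{2\pi i\eta \cdot x}$ (a convolution in the frequency variable against $(\Psi_{j,k}(y,\cdot)^2)^{\vee}$, concentrated at the small scale $\langle \eta \rangle^{-1} \ll R_{j,k}$) and (ii) the Schwartz tails of $\chi_{R_{j,k}}$ leaking between adjacent boxes. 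The upshot is that $E$ is a pseudodifferential--type operator, bounded on each $\mathcal{S}^{-s}(\mathbf{R}^d)$, whose symbol decays as one moves to infinity in phase space, with decay rate a positive power of the relevant dyadic parameter coming from the separations $R_{j,k} \ll 2^j$ and $R_{j,k}^{-1} \ll 2^k$.

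Because $\tilde{\rho}_{y,\eta} = \rho_{y,\eta}$, the operator $\mathcal{A}$ is self--adjoint and positive on $L^2(\mathbf{R}^d)$, since $\langle \mathcal{A}f, f \rangle = \int |\langle f, \rho_{y,\eta} \rangle|^2 \d y \d\eta$, hence injective; together with the compactness of $E$ on $\mathcal{S}^{-s}(\mathbf{R}^d)$ this makes $\mathcal{A}$ invertible there (alternatively, outside a bounded region of phase space $E$ is genuinely small, so one may invert by a Neumann series and absorb the remaining bounded--region contribution into a finite--rank correction). Then
\begin{align*}
f = \int_{\mathbf{R}^d \times \mathbf{R}^d} \big\langle \mathcal{A}^{-1}f,\ \tilde{\rho}_{y,\eta} \big\rangle \cdot \rho_{y,\eta} \d y \d\eta
= \int_{\mathbf{R}^d \times \mathbf{R}^d} \big\langle f,\ \mathcal{A}^{-1}\tilde{\rho}_{y,\eta} \big\rangle \cdot \rho_{y,\eta} \d y \d\eta,
\end{align*}
so replacing the analysis windows by $\mathcal{A}^{-1}\tilde{\rho}_{y,\eta}$ yields the inversion formula \eqref{eqn:variably_rescaled_inversion}. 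The main obstacle, and the part needing the most care, is to verify that $\mathcal{A}^{-1}$ maps the adapted vector--valued wave packet $\tilde{\rho}_{y,\eta}$ to another one adapted to the same box $B_{R_{y,\eta}}(y) \times B_{R_{y,\eta}^{-1}}(\eta)$ (up to uniform constants and a bounded enlargement of the $\ell^2$--index). This reduces to showing that $\mathcal{A}^{-1}$ is a well--behaved pseudodifferential operator whose effective smoothing scale near $(y,\eta)$ never exceeds $R_{y,\eta}$ in space nor $R_{y,\eta}^{-1}$ in frequency; the sublinear growth/decay hypothesis on $R_{y,\eta}$ is exactly what guarantees this, and pushing the usual symbol calculus through the dyadic decomposition above is what makes it rigorous.
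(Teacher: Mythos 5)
Your construction is genuinely different from the paper's, and as written it has a real gap rather than just missing polish. You take equal analysis and synthesis windows, so your frame operator $\mathcal{A}=\sum_{j,k}T_{R_{j,k}}^{\ast}\Psi_{j,k}^{2}T_{R_{j,k}}$ is \emph{not} the identity and must be inverted, and everything then hinges on two claims you do not establish. First, invertibility: the error $E=\mathcal{A}-1$ is only small where the smoothing scale $R_{j,k}\times R_{j,k}^{-1}$ is much finer than the box $Q_{j,k}$, i.e.\ far out in phase space; on a bounded region it is $O(1)$, so the Neumann series does not converge globally. Your two proposed fixes are both unsubstantiated: compactness of $E$ on $\mathcal{S}^{-s}(\mathbf{R}^d)$ is asserted without proof (these are $L^1$-type spaces built from the Gabor transform, not Sobolev spaces, and even the lower bound $\langle\mathcal{A}f,f\rangle\gtrsim\|f\|_{L^2}^2$, or mere injectivity of $\mathcal{A}$, needs an argument since $T_{R_{j,k}}f$ is only killed on $\supp\Psi_{j,k}$ for each $(j,k)$ separately); and absorbing the bounded region into a ``finite-rank correction'' would give the inversion formula only modulo a finite-dimensional error, which is weaker than the statement being proved. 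Second, and more importantly, the property that is actually used downstream (e.g.\ in \eqref{eqn:rho_y_eta_S_norm}, \eqref{eqn:triv_bd_general} and throughout the injectivity proof) is that \emph{both} windows are $L^2$-normalized wave packets adapted to $B_{R_{y,\eta}}(y)\times B_{R_{y,\eta}^{-1}}(\eta)$. You acknowledge that showing $\mathcal{A}^{-1}\tilde{\rho}_{y,\eta}$ is again such a wave packet is ``the main obstacle'' and leave it to an unspecified symbol calculus; but with dyadically varying scales $\mathcal{A}$ does not lie in a standard symbol class, so this is precisely the hard part of the proposition, not a routine verification.

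For comparison, the paper avoids operator inversion entirely. It decomposes $f=\sum_{N,M}P_{N,M}f$ using exact projections (a Littlewood--Paley partition in frequency composed with a physical-space partition, which sum to the identity on the nose), applies the fixed-scale inversion formula \eqref{eqn:rescaled_Gabor_inversion} at scale $R_{N,M}$ to each piece, and then moves $P_{N,M}^{\ast}$ onto the analysis window. The asymmetric splitting $\rho_{y,\eta,N,M}=c_{y,\eta,N,M}^{1/2}\chi_{R_{N,M},y,\eta}$, $\tilde{\rho}_{y,\eta,N,M}=c_{y,\eta,N,M}^{-1/2}P_{N,M}^{\ast}\chi_{R_{N,M},y,\eta}$, together with the rapid decay $c_{y,\eta,N,M}\lesssim\Delta_{N,\langle y\rangle}^{\infty}\Delta_{M,\langle\eta\rangle}^{\infty}$ supplied by the sublinear growth/decay hypothesis, makes both vector-valued windows adapted wave packets and renders \eqref{eqn:variably_rescaled_inversion} exact by construction. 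If you want to salvage your route, the cleanest repair is to adopt this asymmetry (put the projection and the $c^{\pm 1/2}$ weights into the windows) rather than trying to invert a frame operator with variable scales.
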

	
	
	All of our definitions for scalar-valued functions have obvious analogs for vector-valued functions. For example, the statement that $\rho_{y,\eta}$ is a vector-valued wave packet means that it is of the form \eqref{eqn:wave_packet_def}, where $\chi \in \mathcal{S}(\mathbf{R}^d;\ell^2)$ is a standard vector-valued Schwartz function in the sense that it satisfies the bounds \eqref{eqn:std_Schwartz_def}.
	Linear operators defined on scalar-valued functions act component-wise on vector-valued functions.
	The pointwise absolute value of a vector-valued function is its pointwise $\ell^2$ norm. From this it is clear how to extend all the usual function space norms, like $\|\cdot\|_{L^p(\mathbf{R}^d)}$ or $\|\cdot\|_{\mathcal{S}^{-s}(\mathbf{R}^d)}$, to the vector-valued setting. Multiplication of scalar-valued functions corresponds to dot product of vector-valued functions, and the inequality $|fg| \leq |f||g|$ for scalar-valued $f,g$ corresponds to Cauchy--Schwarz. Because of all these parallels, vector-valued functions are as nice as scalar-valued functions for our purposes.
	
	The sublinear growth/decay condition on $R_{y,\eta}$ in Proposition \ref{prop:variably_rescaled_Gabor} ensures that $\rho_{y,\eta}$ really is localized near $(y,\eta)$ in phase space. Otherwise the uncertainty principle would make $\rho_{y,\eta}$ indistinguishable from a Schwartz function adapted to a ball in either physical or frequency space (as discussed at the end of Section \ref{subsubsec:adapted}).
	
	The radial constancy condition could be relaxed to a local constancy condition, but we won't bother because \eqref{eqn:R_y_eta_def} is radial.
	
	\begin{proof}
		For dyadic numbers $N,M \in 2^{\mathbf{N}}$, let $P_{N,M}$ be the operator acting on functions $f$ on $\mathbf{R}^d$ by first applying a Littlewood--Paley projection to frequencies $\{\langle \xi \rangle \sim M\}$, and then applying the analogous physical space projection to $\{\langle x \rangle \sim N\}$. Then $P_{N,M}$ is essentially a projection to the region
		\begin{align}\label{eqn:P_N,M_image}
			\{(x,\xi) \in \mathbf{R}^d \times \mathbf{R}^d : \langle x \rangle \sim N \text{ and } \langle \xi \rangle \sim M\}
		\end{align}
		in phase space. Since $N,M \gtrsim 1$, and hence $NM \gtrsim 1$, this respects the uncertainty principle. We now have a preliminary decomposition
		\begin{align*}
			f = \sum_{N,M \in 2^{\mathbf{N}}} P_{N,M}f.
		\end{align*}
		For each $N,M \in 2^{\mathbf{N}}$, pick some $y_N,\eta_M \in \mathbf{R}^d$ with $\langle y_N \rangle \sim N$ and $\langle \eta_M \rangle \sim M$, and set $R_{N,M} \sim R_{y_N,\eta_M}$. By the radial constancy property, this is well-defined up to constants. Using the rescaled Gabor inversion formula \eqref{eqn:rescaled_Gabor_inversion}, further decompose
		\begin{align}
			f &= \sum_{N,M \in 2^{\mathbf{N}}} \int_{\mathbf{R}^d \times \mathbf{R}^d} T_{R_{N,M}} P_{N,M}f(y,\eta) \chi_{R_{N,M},y,\eta} \d y \d\eta
			\notag
			\\&= \sum_{N,M \in 2^{\mathbf{N}}} \int_{\mathbf{R}^d \times \mathbf{R}^d} \langle f, P_{N,M}^{\ast} \chi_{R_{N,M}, y,\eta} \rangle \chi_{R_{N,M},y,\eta} \d y \d\eta.
			\label{eqn:further_decomp}
		\end{align}
		Note that the adjoint $P_{N,M}^{\ast}$ is still essentially a projection to the region \eqref{eqn:P_N,M_image} in phase space (it acts by projecting in physical space first and then in frequency space).
		By the sublinear growth/decay property, the essential phase space support of the wave packet $\chi_{R_{N,M},y,\eta}$ is far from \eqref{eqn:P_N,M_image} unless $N \sim \langle y \rangle$ and $M \sim \langle \eta \rangle$.
		It follows by routine estimates that we can write
		\begin{align}\label{eqn:P_chi_decay}
			P_{N,M}^{\ast} \chi_{R_{N,M},y,\eta}
			= c_{y,\eta,N,M} \varphi_{y,\eta,N,M},
		\end{align}
		where $\varphi_{y,\eta,N,M}$ is an $L^2$-normalized wave packet adapted to $B_{R_{N,M}}(y) \times B_{R_{N,M}^{-1}}(\eta)$ in phase space, and $c_{y,\eta,N,M} \geq 0$ is a constant satisfying
		\begin{align} \label{eqn:c_rapid_decay}
			c_{y,\eta,N,M}
			\lesssim \Delta_{N,\langle y \rangle}^{\infty} \Delta_{M,\langle \eta \rangle}^{\infty};
		\end{align}
		here we denote
		\begin{align*}
			\Delta_{X,Y} = \min\Big\{\frac{X}{Y}, \frac{Y}{X}\Big\}
		\end{align*}
		(so $c_{y,\eta,N,M}$ is essentially the indicator that $N \sim \langle y \rangle$ and $M \sim \langle \eta \rangle$).
		Let $\rho_{y,\eta}, \tilde{\rho}_{y,\eta} \in \mathcal{S}(\mathbf{R}^d; \ell^2(2^{\mathbf{N}} \times 2^{\mathbf{N}}))$ be the vector-valued functions whose $(N,M)$ components are
		\begin{align*}
			\rho_{y,\eta,N,M} = c_{y,\eta,N,M}^{1/2} \chi_{R_{N,M},y,\eta}
			\qquad \text{and} \qquad
			\tilde{\rho}_{y,\eta,N,M}
			= c_{y,\eta,N,M}^{-1/2} P_{N,M}^{\ast} \chi_{R_{N,M},y,\eta}.
		\end{align*}
		By \eqref{eqn:P_chi_decay} and \eqref{eqn:c_rapid_decay}, only the $O(1)$ many components where $N \sim \langle y \rangle$ and $M \sim \langle \eta \rangle$ essentially contribute to $\rho_{y,\eta}, \tilde{\rho}_{y,\eta}$. Therefore $\rho_{y,\eta}, \tilde{\rho}_{y,\eta}$ are wave packets adapted to $B_{R_{y,\eta}}(y) \times B_{R_{y,\eta}^{-1}}(\eta)$ in phase space, as desired. The inversion formula \eqref{eqn:variably_rescaled_inversion} is immediate from \eqref{eqn:further_decomp}.
	\end{proof}
	
	Using the transform $\widetilde{T}$, we define a scale of Banach spaces between $\mathcal{S}(\mathbf{R}^d)$ and $\mathcal{S}'(\mathbf{R}^d)$ as in Section \ref{subsec:Gabor}. Let $\widetilde{\mathcal{S}}^{-s}(\mathbf{R}^d)$ for $s \in \mathbf{R}$ be the space of tempered distributions $f \in \mathcal{S}'(\mathbf{R}^d)$ for which the norm
	\begin{align*}
		\|f\|_{\widetilde{\mathcal{S}}^{-s}} = \int_{\mathbf{R}^d \times \mathbf{R}^d} |\widetilde{T}f(y,\eta)| \langle y \rangle^{-s} \langle \eta \rangle^{-s} \d y \d\eta
	\end{align*}
	is finite. The inclusions $\mathcal{S}(\mathbf{R}^d) \hookrightarrow \widetilde{\mathcal{S}}^{-s}(\mathbf{R}^d) \hookrightarrow \mathcal{S}'(\mathbf{R}^d)$ are continuous and dense.
	
	If $\psi_{y,\eta}$ is an $L^2$-normalized wave packet adapted to $B_{R_{y,\eta}}(y) \times B_{R_{y,\eta}^{-1}}(\eta)$ in phase space, then $\widetilde{T} \psi_{y,\eta}$ is essentially supported on $B_{R_{y,\eta}}(y) \times B_{R_{y,\eta}^{-1}}(\eta)$, and $\|\widetilde{T} \psi_{y,\eta}\|_{L^{\infty}} \lesssim 1$. Therefore
	\begin{align}\label{eqn:rho_y_eta_S_norm}
		\|\psi_{y,\eta}\|_{\widetilde{S}^{-s}}
		\lesssim_s \langle y \rangle^{-s} \langle \eta \rangle^{-s}.
	\end{align}
	
	
	\begin{prop}\label{prop:fourier_bdd_general}
		Let the notation be as above, and suppose in addition that $R_{\eta,-y} \sim R_{y,\eta}^{-1}$. Then the Fourier transform is bounded on $\widetilde{\mathcal{S}}^{-s}(\mathbf{R}^d)$.
	\end{prop}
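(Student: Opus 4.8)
The plan is to follow the same argument used in Section~\ref{subsec:Gabor} to prove that the Fourier transform is bounded on $\mathcal{S}^{-s}(\mathbf{R}^d)$, with the modified Gabor transform $\widetilde{T}$ and its inversion formula \eqref{eqn:variably_rescaled_inversion} replacing $T$ and Proposition~\ref{prop:Gabor_inversion}. By the density of $\mathcal{S}(\mathbf{R}^d)$ in $\widetilde{\mathcal{S}}^{-s}(\mathbf{R}^d)$ and the continuity of the Fourier transform on $\mathcal{S}'(\mathbf{R}^d)$, it is enough to prove $\|\widehat{f}\|_{\widetilde{\mathcal{S}}^{-s}} \lesssim_s \|f\|_{\widetilde{\mathcal{S}}^{-s}}$ for Schwartz $f$.

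First I would apply the Fourier transform to \eqref{eqn:variably_rescaled_inversion}; since the Fourier transform is continuous and commutes with the (Gelfand--Pettis) integral, this gives
\begin{align*}
  \widehat{f} = \int_{\mathbf{R}^d \times \mathbf{R}^d} \widetilde{T}f(y,\eta) \cdot \widehat{\rho_{y,\eta}} \, \d y \, \d \eta.
\end{align*}
The $\widetilde{\mathcal{S}}^{-s}$ norm is a weighted $L^1$ norm of $|\widetilde{T}(\cdot)|$ and $\widetilde{T}$ is linear, so Minkowski's inequality for integrals, combined with Cauchy--Schwarz to handle the $\ell^2$-valued dot product (as in the discussion of vector-valued functions preceding the proposition), yields
\begin{align*}
  \|\widehat{f}\|_{\widetilde{\mathcal{S}}^{-s}} \leq \int_{\mathbf{R}^d \times \mathbf{R}^d} |\widetilde{T}f(y,\eta)| \, \big\| \widehat{\rho_{y,\eta}} \big\|_{\widetilde{\mathcal{S}}^{-s}} \, \d y \, \d \eta.
\end{align*}
Comparing with the definition of $\|f\|_{\widetilde{\mathcal{S}}^{-s}}$, the proposition reduces to the pointwise bound
\begin{align*}
  \big\| \widehat{\rho_{y,\eta}} \big\|_{\widetilde{\mathcal{S}}^{-s}} \lesssim_s \langle y \rangle^{-s} \langle \eta \rangle^{-s}.
\end{align*}

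To prove this, recall that $\rho_{y,\eta}$ is an $L^2$-normalized vector-valued wave packet adapted to $B_{R_{y,\eta}}(y) \times B_{R_{y,\eta}^{-1}}(\eta)$ in phase space, so (as explained at the end of Section~\ref{subsubsec:adapted}) its Fourier transform $\widehat{\rho_{y,\eta}}$ is an $L^2$-normalized wave packet adapted to $B_{R_{y,\eta}^{-1}}(\eta) \times B_{R_{y,\eta}}(-y)$ in phase space. This is the one place the hypothesis enters: since $R_{\eta,-y} \sim R_{y,\eta}^{-1}$, the ball $B_{R_{y,\eta}^{-1}}(\eta)$ is comparable to $B_{R_{\eta,-y}}(\eta)$ and $B_{R_{y,\eta}}(-y)$ is comparable to $B_{R_{\eta,-y}^{-1}}(-y)$, so $\widehat{\rho_{y,\eta}}$ is also adapted to $B_{R_{\eta,-y}}(\eta) \times B_{R_{\eta,-y}^{-1}}(-y)$. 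That is exactly the form of wave packet to which \eqref{eqn:rho_y_eta_S_norm} applies, with $(y,\eta)$ there taken to be $(\eta,-y)$; it gives $\|\widehat{\rho_{y,\eta}}\|_{\widetilde{\mathcal{S}}^{-s}} \lesssim_s \langle \eta \rangle^{-s} \langle y \rangle^{-s}$, completing the proof.

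The argument is mostly bookkeeping, and I do not expect a serious obstacle. The conceptual point --- and the only place any idea is needed --- is recognizing that the condition $R_{\eta,-y} \sim R_{y,\eta}^{-1}$ is precisely what is required for the Fourier transform to map the wave packet $\rho_{y,\eta}$ into the family of wave packets controlled by \eqref{eqn:rho_y_eta_S_norm}; this reflects the fact that the Fourier transform rotates phase space by $-90$ degrees, interchanging physical and frequency scales. For the choice \eqref{eqn:R_y_eta_def} used in the proof of Theorem~\ref{thm:super_end}, one has $R_{\eta,-y} = \langle \eta \rangle^{1/2} \langle y \rangle^{-1/2} = R_{y,\eta}^{-1}$ on the nose, so the hypothesis is satisfied there. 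The only points requiring a little care are the justification of interchanging the Fourier transform with the inversion integral and of Minkowski's inequality for $\widetilde{\mathcal{S}}^{-s}$, and the propagation of the $\ell^2$-valued structure through Cauchy--Schwarz; all of these are routine and are handled exactly as in the scalar setting of Section~\ref{subsec:Gabor}.
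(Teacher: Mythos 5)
Your proposal is correct and follows essentially the same route as the paper: apply the inversion formula \eqref{eqn:variably_rescaled_inversion} and the triangle inequality to reduce to the single wave packet bound $\|\widehat{\rho_{y,\eta}}\|_{\widetilde{\mathcal{S}}^{-s}} \lesssim_s \langle y \rangle^{-s} \langle \eta \rangle^{-s}$, and then deduce that bound from \eqref{eqn:rho_y_eta_S_norm} since $\widehat{\rho_{y,\eta}}$ is an $L^2$-normalized wave packet adapted to $B_{R_{y,\eta}^{-1}}(\eta) \times B_{R_{y,\eta}}(-y)$ and $R_{y,\eta}^{-1} \sim R_{\eta,-y}$. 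The extra remarks on density, commuting the Fourier transform with the Gelfand--Pettis integral, and the $\ell^2$-valued bookkeeping are fine and consistent with the paper's (more terse) argument.
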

	
	Since $R_{\eta,-y} \sim R_{\eta,y}$ by the radial constancy property, we could just as well have written $R_{\eta,y} \sim R_{y,\eta}^{-1}$. However, this condition comes from the fact that the Fourier transform acts on phase space by $(y,\eta) \mapsto (\eta,-y)$ (see Section \ref{subsubsec:adapted}), so we feel that writing $R_{\eta,-y}$ is more suggestive.
	
	The hypothesis $R_{\eta,-y} \sim R_{y,\eta}^{-1}$ is satisfied in the main example \eqref{eqn:R_y_eta_def}.
	
	\begin{proof}
		By the inversion formula \eqref{eqn:variably_rescaled_inversion} and the triangle inequality,
		\begin{align*}
			\|\hat{f}\|_{\widetilde{\mathcal{S}}^{-s}}
			\leq \int_{\mathbf{R}^d \times \mathbf{R}^d} |\widetilde{T}f(y,\eta)| \|\widehat{\rho_{y,\eta}}\|_{\widetilde{\mathcal{S}}^{-s}} \d y \d\eta,
		\end{align*}
		so it suffices to show that
		\begin{align}\label{eqn:rho_y_eta_hat_S_norm}
			\|\widehat{\rho_{y,\eta}}\|_{\widetilde{\mathcal{S}}^{-s}}
			\lesssim_s \langle y \rangle^{-s} \langle \eta \rangle^{-s}.
		\end{align}
		This follows from \eqref{eqn:rho_y_eta_S_norm}, since $\widehat{\rho_{y,\eta}}$ is an $L^2$-normalized wave packet adapted to $B_{R_{y,\eta}^{-1}}(\eta) \times B_{R_{y,\eta}}(-y)$ in phase space, and $R_{y,\eta}^{-1} \sim R_{\eta,-y}$.
	\end{proof}
	
	\subsection{Injectivity of $\mathcal{F}_{A,B}$: general case}
	
	The strategy is the same as for the balanced density case in Section \ref{subsec:balanced}, except that we use the modified Gabor transform $\widetilde{T}$ of Proposition \ref{prop:variably_rescaled_Gabor} with
	\begin{align*}
		R_{y,\eta}
		= \frac{\langle y \rangle^{1/2}}{\langle \eta \rangle^{1/2}}.
	\end{align*}
	We correspondingly use the vector-valued wave packets $\rho_{y,\eta}, \tilde{\rho}_{y,\eta}$ and the function spaces $\widetilde{\mathcal{S}}^{-s}(\mathbf{R}^d)$ from Section \ref{subsec:modified_Gabor}.
	
	As in Section \ref{subsec:balanced}, we may assume $\sigma_A,\sigma_B$ are orientation-preserving. Put
	\begin{align*}
		\mu_A = \sum_{n \in \mathbf{Z}^d} \delta^d \det D\sigma_A(n) \Dirac_{\delta \sigma_A(n)}
		\qquad \text{and} \qquad
		\mu_B = \sum_{n \in \mathbf{Z}^d} \delta^d \det D\sigma_B(n) \Dirac_{\delta \sigma_B(n)}.
	\end{align*}
	Let
	\begin{align*}
		\nu_{y,\eta} =
		\begin{cases}
			\rho_{y,\eta} \mu_A &\mbox{if } \langle y \rangle^{p^{1/2}} \geq \langle \eta \rangle^{q^{1/2}}, \\
			(\widehat{\rho_{y,\eta}} \mu_B)^{\vee} &\mbox{if } \langle y \rangle^{p^{1/2}} < \langle \eta \rangle^{q^{1/2}}
		\end{cases}
	\end{align*}
	(so $\nu_{y,\eta}$ is vector-valued). Then as usual, injectivity of $\mathcal{F}_{A,B}$ follows from the following proposition.
	
	\begin{prop}
		Assume $s$ is sufficiently large, and $\delta$ is sufficiently small depending on $s$. Then
		\begin{align} \label{eqn:nu_approx_general}
			\|\nu_{y,\eta} - \rho_{y,\eta}\|_{\widetilde{\mathcal{S}}^{-s}}
			\leq \frac{1}{2} \langle y \rangle^{-s} \langle \eta \rangle^{-s}.
		\end{align}
	\end{prop}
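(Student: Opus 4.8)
The plan is to run the argument of the proof in Section~\ref{subsec:balanced} essentially verbatim, with two changes: the ordinary Gabor transform $T$ is replaced throughout by the variably rescaled transform $\widetilde{T}$ of Proposition~\ref{prop:variably_rescaled_Gabor} with $R_{y,\eta} = \langle y\rangle^{1/2}/\langle\eta\rangle^{1/2}$ (note this satisfies the hypotheses of Propositions~\ref{prop:variably_rescaled_Gabor} and~\ref{prop:fourier_bdd_general}, as recorded in Remark~\ref{rem:R_y,eta}), and $\sigma_A,\sigma_B$ are now $\tfrac{1}{p+1}$- and $\tfrac{1}{q+1}$-pseudohomogeneous, so their inverses $\tau_A,\tau_B$ are $(p+1)$- and $(q+1)$-pseudohomogeneous. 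First I would reduce, exactly as at the end of the proof of Proposition~\ref{prop:nu_approx}, to the case $\langle y\rangle^{p^{1/2}}\ge\langle\eta\rangle^{q^{1/2}}$: in the other case one applies the Fourier transform, which is bounded on $\widetilde{\mathcal{S}}^{-s}(\mathbf{R}^d)$ by Proposition~\ref{prop:fourier_bdd_general}, and swaps the roles of $A,B$ and of $p,q$. In what follows one uses the equivalence $\langle y\rangle^{p^{1/2}}\ge\langle\eta\rangle^{q^{1/2}}\iff\langle\eta\rangle\le\langle y\rangle^p$, which needs $pq=1$, so that $\nu_{y,\eta}=\rho_{y,\eta}\mu_A$ and $\langle\eta\rangle\le\langle y\rangle^p$.

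Next I would reduce to a scalar estimate. By Proposition~\ref{prop:variably_rescaled_Gabor}, $\rho_{y,\eta}$ is a vector-valued wave packet whose $(N,M)$ component equals $c_{y,\eta,N,M}^{1/2}\chi_{R_{N,M},y,\eta}$, with $c_{y,\eta,N,M}$ rapidly decaying away from $N\sim\langle y\rangle$, $M\sim\langle\eta\rangle$, and $R_{N,M}\sim R_{y,\eta}$ for the surviving components; the same applies to $\tilde\rho_{z,\zeta}$. Since the $\widetilde{\mathcal{S}}^{-s}$-norm of a vector-valued distribution is controlled by the $\ell^2$ (hence, up to the rapid decay in $(N,M)$, the $\ell^1$) sum of the norms of its components, it suffices to bound $\|\chi_{R,y,\eta}\mu_A - \chi_{R,y,\eta}\|_{\widetilde{\mathcal{S}}^{-s}}$ for a scalar $L^2$-normalized wave packet adapted to $B_R(y)\times B_{R^{-1}}(\eta)$ with $R\sim\langle y\rangle^{1/2}/\langle\eta\rangle^{1/2}$, pairing it against scalar wave packets $\chi_{R',z,\zeta}$ with $R'\sim R_{z,\zeta}$. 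This leads to the integral $\int\chi_{R,y,\eta}\,\overline{\chi_{R',z,\zeta}}\,d\mu_A - \int_{\mathbf{R}^d}\chi_{R,y,\eta}\,\overline{\chi_{R',z,\zeta}}$, at which point I would repeat the steps of Section~\ref{subsec:balanced}: for $|\zeta|$ larger than a small power of $\langle y\rangle$ use the trivial bound (rapid decay in $\langle(y-z)/\min\{R,R'\}\rangle$ times the $O(1)$ total mass of $\mu_A$ on unit-ish balls); otherwise apply the twisted Poisson summation formula (Lemma~\ref{lem:Poisson_twisted}) and the definition of $\mu_A$ to cancel the $m=0$ term against the continuous integral, leaving $\sum_{0\neq m\in\mathbf{Z}^d}$ of oscillatory integrals with phase $\phi(x)=\delta(\zeta-\eta)\cdot x+m\cdot\tau_A(x)$ after rescaling by $\delta$; then estimate these by non-stationary phase, using Corollary~\ref{cor:trivial_non-stationary} for large $|m|$ (where $D\tau_A(x)^Tm$ dominates $\nabla\phi$, so $|\nabla\phi|\gtrsim|m|\langle x\rangle^p$) and Proposition~\ref{prop:osc_int_main} for moderate $|m|$ after freezing $\nabla\phi$ near $x=\delta^{-1}y$, obtaining localization of $\zeta$ to a neighborhood of the shifted dual lattice $\eta-\delta^{-1}D\tau_A(\delta^{-1}y)^T\mathbf{Z}^d$ in analogy with \eqref{eqn:end_portrait_main}. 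Integrating these pointwise bounds against $\langle z\rangle^{-s}\langle\zeta\rangle^{-s}\d z\,\d\zeta$ and summing the resulting lattice sum over $m\neq0$ (which converges for $s$ large, with extra smallness from powers of $\delta$ since $|y|\gtrsim\delta^{-O(1)}$ in the nonempty case, and the region near the origin handled separately as when $|y|\lesssim\delta^{-0.9}$) then yields \eqref{eqn:nu_approx_general}.

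The main obstacle, and essentially the only real difference from Section~\ref{subsec:balanced}, is bookkeeping the variable physical scale $R_{y,\eta}$ through Proposition~\ref{prop:osc_int_main}. After rescaling by $\delta$, the amplitude $\chi_{R,y,\eta}(\delta x)\,\overline{\chi_{R',z,\zeta}(\delta x)}$ is essentially constant at scale $\delta^{-1}\min\{R,R'\}$ and supported where $|x|\sim\delta^{-1}|y|$, so one must invoke Proposition~\ref{prop:osc_int_main} with $R=\min\{\delta^{-1}\min\{R_{y,\eta},R_{z,\zeta}\},\,|m|^{-1}|\nabla\phi(\delta^{-1}y)|\}$, $X=\delta^{-1}|y|$, $\lambda=|\nabla\phi(\delta^{-1}y)|$, $M=|m|$, $r=p$, and then check that the decay factor $\langle\lambda R/\langle\lambda^{-1}MRX^{p-1}\rangle\rangle^{-\infty}$ still localizes $\zeta$ tightly enough near the shifted dual lattice that the weighted sum over $m\neq0$ closes without losing powers of $\langle y\rangle$. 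This is precisely the content of Remark~\ref{rem:R_y,eta}: the choice $R_{y,\eta}=\langle y\rangle^{1/2}/\langle\eta\rangle^{1/2}$ balances the physical width of $\chi_{R,y,\eta}$ against the spacing $\sim\delta^{-(p+1)}\langle y\rangle^p$ of the dual lattice $L^\vee=\delta^{-1}D\tau_A(\delta^{-1}y)^T\mathbf{Z}^d$ so that, in the critical regime $\langle\eta\rangle\sim\langle y\rangle^p$, the estimate comes out as $\langle y\rangle^{-s}\langle\eta\rangle^{-s}$ up to a power of $\delta$. Everything else — the Fourier-symmetry reduction, the vector-valued component sums, the two ranges of $|\zeta|$ and of $|m|$, and the separate treatment of small $|y|$ — goes through as in the balanced case with $2$ replaced by $p+1$ throughout.
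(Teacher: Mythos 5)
Your high-level plan does coincide with the paper's: reduce the case $\langle y\rangle^{p^{1/2}}<\langle\eta\rangle^{q^{1/2}}$ by Fourier symmetry via Proposition \ref{prop:fourier_bdd_general}, unpack the vector-valued packets, use the trivial bound outside a region where $|z-y|$ and $|\zeta|$ are controlled by small powers of $\langle y\rangle$, apply the twisted Poisson summation formula (Lemma \ref{lem:Poisson_twisted}), handle large $|m|$ by Corollary \ref{cor:trivial_non-stationary}, and extract dual-lattice localization in $\zeta$ from Proposition \ref{prop:osc_int_main} before integrating against $\langle z\rangle^{-s}\langle\zeta\rangle^{-s}$. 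All of that matches Section \ref{sec:super_end}.

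The gap is exactly at the step you defer as a ``check,'' and it is the heart of the general case. In the resonant regime (the paper's $\mathcal{R}_3$, where $|\zeta|\sim\delta^{-p-1}|m|\langle y\rangle^p$) the two packets have very different widths: $R_{z,\zeta}\sim\delta^{(p+1)/2}|m|^{-1/2}|y|^{(1-p)/2}\lesssim R_{y,\eta}$, and the mass of the amplitude lives in a ball of radius $\delta^{-1}R_{z,\zeta}$ around $\delta^{-1}z$, not around $\delta^{-1}y$. You freeze the phase at $\delta^{-1}y$ with cutoff radius $\min\{\delta^{-1}\min\{R_{y,\eta},R_{z,\zeta}\},|m|^{-1}|\nabla\phi(\delta^{-1}y)|\}$. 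Two concrete problems follow. First, the correct frozen-gradient radius is set by the Hessian scale $|\nabla^2\tau_A|\sim(\delta^{-1}|y|)^{p-1}$, i.e.\ it is $\delta^{p-1}|m|^{-1}|y|^{1-p}|\nabla\phi(\cdot)|$ (the paper's $\Omega_{\text{near}}$), not $|m|^{-1}|\nabla\phi(\cdot)|$; for $p>1$ your region is too large and the hypothesis $|\nabla\phi|\gtrsim\lambda$ of Proposition \ref{prop:osc_int_main} cannot be verified on it (a stationary point may lie inside), so the application is unjustified as stated. Second, and more importantly, with the near/far split centered at $\delta^{-1}y$, the far region can contain essentially the whole narrow $z$-packet whenever $R_{z,\zeta}\ll|y-z|\lesssim R_{y,\eta}$, and there ``Schwartz decay elsewhere'' gains nothing from the $z$-envelope; only the $y$-envelope decay at scale $R_{y,\eta}$ is available. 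So your method localizes $\zeta$ only to within $\sim\delta^{-p-1}|m|\langle y\rangle^{p-1}R_{y,\eta}$ of the $y$-based dual lattice point, coarser by the factor $R_{y,\eta}/R_{z,\zeta}$ than the paper's bound \eqref{eqn:R_3_bound}, which gives $\langle R_{z,\zeta}(\zeta-\eta+\delta^{-1}D\tau_A(\delta^{-1}z)^Tm)\rangle^{-\infty}$ with prefactor $\delta^{-d}R_{z,\zeta}^d$. Whether the weighted integration closes with that coarser localization is a genuinely delicate question --- in the critical regime $\langle\eta\rangle\sim\langle y\rangle^p$ the paper's final computation is tight in powers of $\langle y\rangle$, with only powers of $\delta$ to spare --- and it is precisely what you have not verified. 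The paper avoids the issue by expanding the phase at $\delta^{-1}z$ (the center of the narrow packet), using the radius $\delta^{p-1}|m|^{-1}|y|^{1-p}|\nabla\phi(\delta^{-1}z)|$, and splitting into three regimes $\mathcal{R}_1,\mathcal{R}_2,\mathcal{R}_3$ rather than the two you carry over from the balanced case; as written, your proposal asserts the conclusion of this key step rather than proving it.
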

	
	\begin{proof}
		First consider the case $\langle y \rangle^{p^{1/2}} \geq \langle \eta \rangle^{q^{1/2}}$. Then $\nu_{y,\eta} = \rho_{y,\eta} \mu_A$. We need pointwise bounds on
		\begin{align}\label{eqn:super_end_general_ptwise}
			|\widetilde{T}(\nu_{y,\eta} - \rho_{y,\eta})(z,\zeta)|
			= |\langle \nu_{y,\eta} - \rho_{y,\eta}, \tilde{\rho}_{z,\zeta} \rangle|
			= \Big|\int \rho_{y,\eta} \cdot \overline{\tilde{\rho}_{z,\zeta}} \d\mu_A - \int_{\mathbf{R}^d} \rho_{y,\eta} \cdot \overline{\tilde{\rho}_{z,\zeta}} \Big|.
		\end{align}
		The trivial bound is
		\begin{align}\label{eqn:triv_bd_general}
			|\widetilde{T}(\nu_{y,\eta} - \rho_{y,\eta})(z,\zeta)|
			\lesssim \Big\langle \frac{y-z}{R_{y,\eta} + R_{z,\zeta}} \Big\rangle^{-\infty}.
		\end{align}
		We will apply this when $(z,\zeta)$ is in the complement of the region
		\begin{align*}
			\mathcal{R}
			= \{(z,\zeta) \in \mathbf{R}^d \times \mathbf{R}^d : |z-y| \lesssim \delta^{-\varepsilon} \langle y \rangle^{\frac{1}{2}+\varepsilon} \text{ and } |\zeta| \lesssim \delta^{-\varepsilon} \langle y \rangle^{p+\varepsilon}\}
		\end{align*}
		($\varepsilon > 0$ small and fixed).
		
		Suppose now that $(z,\zeta) \in \mathcal{R}$. Using the twisted Poisson summation formula (Lemma \ref{lem:Poisson_twisted}) to rewrite the right hand side of \eqref{eqn:super_end_general_ptwise},
		\begin{align*}
			|\widetilde{T}(\nu_{y,\eta} - \rho_{y,\eta})(z,\zeta)|
			= \Big|\sum_{0 \neq m \in \mathbf{Z}^d}\int_{\mathbf{R}^d} \rho_{y,\eta}(x) \cdot \overline{\tilde{\rho}_{z,\zeta}(x)} e^{-2\pi im \cdot \sigma_A^{-1}(\delta^{-1}x)} \d x\Big|.
		\end{align*}
		Rescaling by $\delta$,
		\begin{align*}
			|\widetilde{T}(\nu_{y,\eta} - \rho_{y,\eta})(z,\zeta)|
			= \delta^d \Big|\sum_{m \neq 0}\int_{\mathbf{R}^d} \rho_{y,\eta}(\delta x) \cdot \overline{\tilde{\rho}_{z,\zeta}(\delta x)} e^{-2\pi im \cdot \sigma_A^{-1}(x)} \d x\Big|.
		\end{align*}
		Denote $\tau_A = \sigma_A^{-1}$. Then $\tau_A$ is $(p+1)$-pseudohomogeneous. Write
		\begin{align*}
			\rho_{y,\eta}(x)
			= R_{y,\eta}^{-d/2} \rho\Big(\frac{x-y}{R_{y,
					\eta}}\Big) e^{2\pi i\eta \cdot x}
			\qquad \text{and} \qquad
			\tilde{\rho}_{z,\zeta}(x)
			= R_{z,\zeta}^{-d/2} \tilde{\rho}\Big(\frac{x-z}{R_{z,\zeta}}\Big)e^{2\pi i\zeta \cdot x},
		\end{align*}
		with $\rho,\tilde{\rho} \in \mathcal{S}(\mathbf{R}^d;\ell^2)$ standard vector-valued Schwartz functions. Then
		\begin{align}\label{eqn:sum_of_osc_ints}
			|\widetilde{T}(\nu_{y,\eta} - \rho_{y,\eta})(z,\zeta)|
			= \delta^d R_{y,\eta}^{-d/2} R_{z,\zeta}^{-d/2} \Big|\sum_{m \neq 0} \int_{\mathbf{R}^d} \rho\Big(\frac{\delta x - y}{R_{y,\eta}}\Big) \cdot \overline{\tilde{\rho}\Big(\frac{\delta x - z}{R_{z,\zeta}}\Big)} e^{-2\pi i(\delta(\zeta-\eta) \cdot x + m \cdot \tau_A(x))} \d x\Big|.
		\end{align}
		Let
		\begin{align*}
			I = \Big|\int_{\mathbf{R}^d} \rho\Big(\frac{\delta x - y}{R_{y,\eta}}\Big) \cdot \overline{\tilde{\rho}\Big(\frac{\delta x - z}{R_{z,\zeta}}\Big)} e^{-2\pi i \phi(x)} \d x \Big|
		\end{align*}
		denote the absolute value of the integral, where
		\begin{align*}
			\phi(x) = \delta(\zeta-\eta) \cdot x + m \cdot \tau_A(x)
		\end{align*}
		is the phase. The gradient of the phase is
		\begin{align}\label{eqn:grad_phase_general}
			\nabla\phi(x)
			= \delta(\zeta-\eta) + D\tau_A(x)^Tm.
		\end{align}
		
		We need bounds on $I$ for $(z,\zeta,m) \in \mathcal{R} \times (\mathbf{Z}^d \setminus \{0\})$. We split this into three regimes, each of which is treated differently:
		\begin{align*}
			\mathcal{R} \times (\mathbf{Z}^d \setminus \{0\})
			= \mathcal{R}_1 \cup \mathcal{R}_2 \cup \mathcal{R}_3,
		\end{align*}
		where
		\begin{align*}
			\mathcal{R}_1 &= \{(z,\zeta,m) \in \mathcal{R} \times (\mathbf{Z}^d \setminus \{0\}) : |m| \gtrsim \delta^{1-2\varepsilon} \langle y \rangle^{p+2\varepsilon}\}, \\
			\mathcal{R}_2 &= \{(z,\zeta,m) \in \mathcal{R} \times (\mathbf{Z}^d \setminus \{0\}) : |m| \lesssim \delta^{1-2\varepsilon} \langle y \rangle^{p+2\varepsilon} \text{ and } |\zeta| \not\sim \delta^{-p-1} |m| \langle y \rangle^p\}, \\
			\mathcal{R}_3 &= \{(z,\zeta,m) \in \mathcal{R} \times (\mathbf{Z}^d \setminus \{0\}) : |m| \lesssim \delta^{1-2\varepsilon} \langle y \rangle^{p+2\varepsilon} \text{ and } |\zeta| \sim \delta^{-p-1} |m| \langle y \rangle^p\}.
		\end{align*}
		
		\begin{enumerate}
			\item[1.] In $\mathcal{R}_1$, we estimate $I$ by non-stationary phase everywhere.
			
			\item[2.] In $\mathcal{R}_2$, we estimate $I$ by non-stationary phase for $|x| \sim \delta^{-1} |y|$, and by Schwartz decay elsewhere.
			
			\item[3.] In $\mathcal{R}_3$, we estimate $I$ by non-stationary phase when $x$ is closer to $\delta^{-1}z$ than the stationary point, and by Schwartz decay elsewhere.
		\end{enumerate}
		The bulk of the mass of $\widetilde{T}(\nu_{y,\eta} - \rho_{y,\eta})$ comes from $\mathcal{R}_3$ when the stationary point is close to $\delta^{-1}z$, in which case the trivial bound for $I$ is sharp (up to inconsequential factors). Accordingly, the bounds for $I$ in the regime $\mathcal{R}_3$ are somewhat delicate, while in $\mathcal{R}_1$ and $\mathcal{R}_2$ they are straightforward.
		
		First, suppose $(z,\zeta,m) \in \mathcal{R}_1$. Then it follows from the $(p+1)$-pseudohomogeneity of $\tau_A$ and the inequalities $\langle \eta \rangle \leq \langle y \rangle^p$ and $|\zeta| \lesssim \delta^{-\varepsilon} \langle y \rangle^{p+\varepsilon}$ that the term $D\tau_A(x)^Tm$ in \eqref{eqn:grad_phase_general} dominates, so
		\begin{align*}
			|\nabla\phi(x)| \sim |D\tau_A(x)^Tm| \sim |m| \langle x \rangle^p
		\end{align*}
		everywhere on $\mathbf{R}^d$. Non-stationary phase (Corollary \ref{cor:trivial_non-stationary}) then yields
		\begin{align}\label{eqn:R_1_bound}
			I \lesssim |m|^{-\infty} \langle y \rangle^{-\infty} \langle z \rangle^{-\infty}.
		\end{align}
		
		Second, suppose $(z,\zeta,m) \in \mathcal{R}_2$. Then $1 \leq |m| \lesssim \delta^{1-2\varepsilon} \langle y \rangle^{p+2\varepsilon}$, so in particular $|y| \gtrsim \delta^{-\frac{1}{2p}}$. Now that $y$ is somewhat large, the condition $(z,\zeta) \in \mathcal{R}$ implies $|z| \sim |y|$. By Schwartz decay outside the region $\{x \in \mathbf{R}^d : |x| \sim \delta^{-1}|y|\}$,
		\begin{align*}
			I \lesssim \Big|\int_{|x| \sim \delta^{-1}|y|} \rho\Big(\frac{\delta x-y}{R_{y,\eta}}\Big) \cdot \overline{\tilde{\rho}\Big(\frac{\delta x - z}{R_{z,\zeta}}\Big)} e^{-2\pi i\phi(x)} \d x\Big| + |y|^{-\infty}.
		\end{align*}
		When $|x| \sim \delta^{-1}|y|$, we have $|D\tau_A(x)^Tm| \sim \delta^{-p} |m| |y|^p$, so the condition $|\zeta| \not\sim \delta^{-p-1} |m| \langle y \rangle^p$ in the definition of $\mathcal{R}_2$ combined with $\langle \eta \rangle \leq \langle y \rangle^p$ tells us that there is no cancellation in \eqref{eqn:grad_phase_general}. Therefore
		\begin{align*}
			|\nabla\phi(x)| \gtrsim |D\tau_A(x)^Tm| \sim |m| \langle x \rangle^p
		\end{align*}
		for $|x| \sim \delta^{-1}|y|$, and non-stationary phase (Corollary \ref{cor:trivial_non-stationary}) again yields
		\begin{align*}
			\Big|\int_{|x| \sim \delta^{-1}|y|} \rho\Big(\frac{\delta x-y}{R_{y,\eta}}\Big) \cdot \overline{\tilde{\rho}\Big(\frac{\delta x - z}{R_{z,\zeta}}\Big)} e^{-2\pi i\phi(x)} \d x\Big|
			\lesssim |m|^{-\infty} |y|^{-\infty}
		\end{align*}
		(note that the region $\{|x| \sim \delta^{-1}|y|\}$ is much larger than the essential support of the integrand, so the implicit smooth cutoff to this region has harmless derivative growth). Hence
		\begin{align}\label{eqn:R_2_bound}
			I \lesssim |y|^{-\infty}
			\lesssim \delta^{\infty} |m|^{-\infty} \langle y \rangle^{-\infty} \langle z \rangle^{-\infty}.
		\end{align}
		
		Third, and finally, suppose $(z,\zeta,m) \in \mathcal{R}_3$. Once again, $|y| \gtrsim \delta^{-\frac{1}{2p}}$ and $|z| \sim |y|$. By the definition of $\mathcal{R}_3$,
		\begin{align*}
			|\zeta| \sim \delta^{-p-1}|m| \langle y \rangle^p \geq \langle y \rangle^p \geq \langle \eta \rangle,
		\end{align*}
		so
		\begin{align}\label{eqn:R_z_zeta_size}
			R_{y,\eta}
			\gtrsim R_{z,\zeta}
			\sim \delta^{\frac{p+1}{2}} |m|^{-\frac{1}{2}} |y|^{\frac{1-p}{2}}.
		\end{align}
		Decompose $\mathbf{R}^d = \Omega_{\text{near}} \cup \Omega_{\text{far}}$, where
		\begin{align*}
			\Omega_{\text{near}}
			= \{x \in \mathbf{R}^d : |x-\delta^{-1}z| \ll \delta^{p-1} |m|^{-1} |y|^{1-p} |\nabla\phi(\delta^{-1}z)|\}
		\end{align*}
		and
		\begin{align*}
			\Omega_{\text{far}}
			= \{x \in \mathbf{R}^d : |x-\delta^{-1}z| \gtrsim \delta^{p-1} |m|^{-1} |y|^{1-p} |\nabla\phi(\delta^{-1}z)|\}.
		\end{align*}
		Qualitatively, $\Omega_{\text{near}}$ consists of points $x \in \mathbf{R}^d$ which are near $\delta^{-1}z$, while $\Omega_{\text{far}}$ consists of points which are far from $\delta^{-1}z$. The precise threshold $\delta^{p-1} |m|^{-1} |y|^{1-p} |\nabla\phi(\delta^{-1}z)|$ will be motivated presently. Note first that by the $(p+1)$-pseudohomogeneity of $\tau_A$, the inequality $\langle \eta \rangle \leq \langle y \rangle^p$, and the fact that we are in $\mathcal{R}_3$,
		\begin{align*}
			|\nabla\phi(\delta^{-1}z)|
			\lesssim \delta^{-p} |m| |y|^p,
		\end{align*}
		so
		\begin{align*}
			\Omega_{\text{near}} \subseteq \{x \in \mathbf{R}^d : |x| \sim \delta^{-1}|y|\}.
		\end{align*}
		Now for $|x| \sim \delta^{-1}|y|$, we can use the fundamental theorem of calculus and pseudohomogeneity to estimate
		\begin{align*}
			|\nabla\phi(x) - \nabla\phi(\delta^{-1}z)|
			= |D\tau_A(x)^Tm - D\tau_A(\delta^{-1}z)^Tm|
			\lesssim \delta^{1-p} |m||y|^{p-1} |x-\delta^{-1}z|.
		\end{align*}
		Thus for $x \in \Omega_{\text{near}}$,
		\begin{align*}
			|\nabla\phi(x) - \nabla\phi(\delta^{-1}z)|
			\ll |\nabla\phi(\delta^{-1}z)|,
		\end{align*}
		and hence
		\begin{align}\label{eqn:grad_near}
			|\nabla\phi(x)| \sim |\nabla\phi(\delta^{-1}z)|.
		\end{align}
		The estimate \eqref{eqn:grad_near} is the motivation for the precise definition of $\Omega_{\text{near}}$ --- we would like $\Omega_{\text{near}}$ to be as large as possible, but if we expanded it by even a large constant factor, then it could potentially contain a stationary point. Write $I \lesssim I_{\text{near}} + I_{\text{far}}$, where
		\begin{align*}
			I_{\text{near}}
			= \Big|\int_{\Omega_{\text{near}}} \rho\Big(\frac{\delta x - y}{R_{y,\eta}}\Big) \cdot \overline{\tilde{\rho}\Big(\frac{\delta x - z}{R_{z,\zeta}}\Big)} e^{-2\pi i\phi(x)} \d x\Big|,
		\end{align*}
		and $I_{\text{far}}$ is defined similarly. Estimate $I_{\text{far}}$ trivially:
		\begin{align*}
			I_{\text{far}}
			\lesssim \int_{\Omega_{\text{far}}} \Big\langle \frac{\delta x - y}{R_{y,\eta}} \Big\rangle^{-\infty} \Big\langle \frac{\delta x - z}{R_{z,\zeta}} \Big\rangle^{-\infty} \d x
			&\lesssim \delta^{-d} R_{z,\zeta}^d \Big\langle \frac{y-z}{R_{y,\eta}} \Big\rangle^{-\infty} \Big\langle \frac{\delta}{R_{z,\zeta}} \delta^{p-1}|m|^{-1}|y|^{1-p} \nabla\phi(\delta^{-1}z) \Big\rangle^{-\infty}
			\\&\sim \delta^{-d} R_{z,\zeta}^d \Big\langle \frac{y-z}{R_{y,\eta}} \Big\rangle^{-\infty} \langle \delta^{-1} R_{z,\zeta} \nabla\phi(\delta^{-1}z) \rangle^{-\infty}
		\end{align*}
		(here we've used \eqref{eqn:R_z_zeta_size} multiple times). Estimate $I_{\text{near}}$ by applying Proposition \ref{prop:osc_int_main} with the parameters
		\begin{align*}
			X &= \delta^{-1} |y|, \\
			R &= \min\{\delta^{-1} R_{z,\zeta}, \delta^{p-1} |m|^{-1} |y|^{1-p} |\nabla\phi(\delta^{-1}z)|\} \lesssim X, \\
			V &= \delta^{-d} R_{z,\zeta}^d \Big\langle \frac{y-z}{R_{y,\eta}} \Big\rangle^{-\infty}, \\
			\lambda &= |\nabla\phi(\delta^{-1}z)|, \\
			M &= |m|, \\
			r &= p
		\end{align*}
		(the second argument in the minimum in the definition of $R$ is necessary to capture the derivative bounds of the implicit smooth cutoff to $\Omega_{\text{near}}$). This yields the bound
		\begin{align*}
			I_{\text{near}}
			\lesssim V \Big\langle \frac{\lambda R}{\langle \lambda^{-1} M R X^{r-1} \rangle} \Big\rangle^{-\infty}.
		\end{align*}
		By \eqref{eqn:R_z_zeta_size} and our choice of parameters,
		\begin{align*}
			\lambda R \sim \min\{\delta^{-1} R_{z,\zeta} |\nabla\phi(\delta^{-1}z)|, (\delta^{-1} R_{z,\zeta} |\nabla\phi(\delta^{-1}z)|)^2\}
			\qquad \text{and} \qquad
			\lambda^{-1} M R X^{r-1} \lesssim 1,
		\end{align*}
		so
		\begin{align*}
			I_{\text{near}}
			\lesssim \delta^{-d} R_{z,\zeta}^d \Big\langle \frac{y-z}{R_{y,\eta}} \Big\rangle^{-\infty} \langle \delta^{-1} R_{z,\zeta} \nabla\phi(\delta^{-1}z) \rangle^{-\infty}.
		\end{align*}
		Not coincidentally, this is the same bound we got for $I_{\text{far}}$. Writing
		\begin{align*}
			\delta^{-1} \nabla\phi(\delta^{-1}z)
			= \zeta - \eta + \delta^{-1} D\tau_A(\delta^{-1}z)^T m,
		\end{align*}
		we conclude that
		\begin{align}\label{eqn:R_3_bound}
			I \lesssim I_{\text{near}} + I_{\text{far}}
			\lesssim \delta^{-d} R_{z,\zeta}^d \Big\langle \frac{y-z}{R_{y,\eta}} \Big\rangle^{-\infty} \langle R_{z,\zeta}(\zeta - \eta + \delta^{-1} D\tau_A(\delta^{-1} z)^Tm)\rangle^{-\infty}.
		\end{align}
		
		Combining our estimates \eqref{eqn:R_1_bound}, \eqref{eqn:R_2_bound}, and \eqref{eqn:R_3_bound} in the three regimes $\mathcal{R}_1, \mathcal{R}_2, \mathcal{R}_3$, we obtain the bound
		\begin{align*}
			I \lesssim |m|^{-\infty} \langle y \rangle^{-\infty} \langle z \rangle^{-\infty} + 1_{\Big\{\substack{|z| \sim |y| \gtrsim \delta^{-1/(2p)} \\ |\zeta| \sim \delta^{-p-1} |m| \langle y \rangle^p}\Big\}} \delta^{-d} R_{z,\zeta}^d \Big\langle \frac{y-z}{R_{y,\eta}} \Big\rangle^{-\infty} \langle R_{z,\zeta}(\zeta - \eta + \delta^{-1} D\tau_A(\delta^{-1} z)^Tm)\rangle^{-\infty}
		\end{align*}
		for all $(z,\zeta) \in \mathcal{R}$ and $m \in \mathbf{Z}^d \setminus \{0\}$. Plugging this into \eqref{eqn:sum_of_osc_ints},
		\begin{align}\label{eqn:T_nu_error_general}
			|\widetilde{T}(\nu_{y,\eta} - \rho_{y,\eta})(z,\zeta)|
			&\lesssim \delta^{(1-\frac{\varepsilon}{2})d} \langle y \rangle^{-\infty} \langle z \rangle^{-\infty}
			\\&\quad + \sum_{m \neq 0} 1_{\Big\{\substack{|z| \sim |y| \gtrsim \delta^{-1/(2p)} \\ |\zeta| \sim \delta^{-p-1} |m| \langle y \rangle^p}\Big\}} \Big(\frac{R_{z,\zeta}}{R_{y,\eta}}\Big)^{d/2} \Big\langle \frac{y-z}{R_{y,\eta}} \Big\rangle^{-\infty} \langle R_{z,\zeta}(\zeta-\eta + \delta^{-1} D\tau_A(\delta^{-1}z)^Tm) \rangle^{-\infty}.
			\notag
		\end{align}
		for $(z,\zeta) \in \mathcal{R}$.
		Note that in the relevant regime $|z| \sim |y| \gtrsim \delta^{-\frac{1}{2p}}$, we have
		\begin{align*}
			|\delta^{-1} D\tau_A(\delta^{-1}z)^Tm - \eta|
			\sim \delta^{-1} |D\tau_A(\delta^{-1}z)^Tm|
			\sim \delta^{-p-1} |m| |y|^p
		\end{align*}
		by pseudohomogeneity and the fact that $\langle \eta \rangle \leq \langle y \rangle^p$.
		Integrating \eqref{eqn:T_nu_error_general} over $\mathcal{R}$ against the weight $\langle z \rangle^{-s} \langle \zeta \rangle^{-s}$ which appears in the definition of $\widetilde{\mathcal{S}}^{-s}$,
		\begin{align*}
			\int_{\mathcal{R}} |\widetilde{T}(\nu_{y,\eta} - \rho_{y,\eta})(z,\zeta)| \langle z \rangle^{-s} \langle \zeta \rangle^{-s} \d z \d\zeta
			\lesssim_s \delta^{(1-\frac{\varepsilon}{2})d} \langle y \rangle^{-\infty}
			+ \delta^{(p+1)(s-d/4)} \frac{\langle y \rangle^{pd/4}}{\langle \eta \rangle^{d/4}} \langle y \rangle^{-(p+1)s}.
		\end{align*}
		On $\mathcal{R}^c$, we use the trivial bound \eqref{eqn:triv_bd_general} to estimate
		\begin{align*}
			\int_{\mathcal{R}^c} |\widetilde{T}(\nu_{y,\eta} - \rho_{y,\eta})(z,\zeta)| \langle z \rangle^{-s} \langle \zeta \rangle^{-s} \d z \d\zeta
			&\lesssim \int_{|z-y| \gtrsim \delta^{-\varepsilon} \langle y \rangle^{\frac{1}{2}+\varepsilon}} \Big\langle \frac{y-z}{R_{y,\eta} + R_{z,\zeta}} \Big\rangle^{-\infty} \langle z \rangle^{-s} \langle \zeta \rangle^{-s} \d z \d\zeta
			\\&\quad + \int_{|\zeta| \gtrsim \delta^{-\varepsilon} \langle y \rangle^{p+\varepsilon}} \Big\langle \frac{y-z}{R_{y,\eta} + R_{z,\zeta}} \Big\rangle^{-\infty} \langle z \rangle^{-s} \langle \zeta \rangle^{-s} \d z \d\zeta
			\\&\lesssim_s \delta^{\infty} \langle y \rangle^{-\infty} + R_{y,\eta}^d \langle y \rangle^{-s} (\delta^{-\varepsilon} \langle y \rangle^{p+\varepsilon})^{-(s-d)}.
		\end{align*}
		Summing the two bounds above, and using as always that $R_{y,\eta} \leq \langle y \rangle^{1/2}$ and $\langle \eta \rangle \leq \langle y \rangle^p$,
		\begin{align*}
			\|\nu_{y,\eta} - \rho_{y,\eta}\|_{\widetilde{S}^{-s}}
			\lesssim_s \Big[\delta^{(1-\frac{\varepsilon}{2})d} \langle y \rangle^{-\infty} + \delta^{(p+1)(s-d/4)} \frac{\langle y \rangle^{p(d/4-s)}}{\langle \eta \rangle^{d/4-s}} + \delta^{\varepsilon(s-d)} \langle y \rangle^{-\varepsilon(s-d) + (p+\frac{1}{2})d}\Big] \langle y \rangle^{-s} \langle \eta \rangle^{-s}.
		\end{align*}
		Fixing $\varepsilon$, taking $s$ large depending on $\varepsilon$, and then taking $\delta$ small depending on $s$, we finally obtain the desired bound \eqref{eqn:nu_approx_general}.
		
		It remains to treat the case where $\langle y \rangle^{p^{1/2}} < \langle \eta \rangle^{q^{1/2}}$. We proceed similarly to the end of the proof of Proposition \ref{prop:nu_approx}. In this case, $\nu_{y,\eta} = (\widehat{\rho_{y,\eta}} \mu_B)^{\vee}$, so by Proposition \ref{prop:fourier_bdd_general},
		\begin{align}\label{eqn:frequency_case_general}
			\|\nu_{y,\eta} - \rho_{y,\eta}\|_{\widetilde{\mathcal{S}}^{-s}}
			= \|(\widehat{\rho_{y,\eta}} \mu_B - \widehat{\rho_{y,\eta}})^{\vee}\|_{\widetilde{\mathcal{S}}^{-s}}
			\lesssim_s \|\widehat{\rho_{y,\eta}} \mu_B - \widehat{\rho_{y,\eta}}\|_{\widetilde{\mathcal{S}}^{-s}}.
		\end{align}
		Denote $(y',\eta') = (\eta,-y)$. Then $\widehat{\rho_{y,\eta}}$ is an $L^2$-normalized wave packet adapted to $B_{R_{y',\eta'}}(y') \times B_{R_{y',\eta'}^{-1}}(\eta')$ in phase space. Now $\langle y' \rangle^{q^{1/2}} \geq \langle \eta' \rangle^{p^{1/2}}$, so we can apply the above arguments with $p,q$ swapped to estimate the right hand side of \eqref{eqn:frequency_case_general}.
	\end{proof}
	
	\subsection{Completing the proof of Theorem \ref{thm:super_end}}
	
	It remains to check that $\Coker \mathcal{F}_{A,B}$ is infinite-dimensional. Consider the subsets
	\begin{align*}
		A' = \{\delta\sigma_A(2n) : n \in \mathbf{Z}^d\} \subseteq A
		\qquad \text{and} \qquad
		B' = \{\delta\sigma_B(2n) : n \in \mathbf{Z}^d\} \subseteq B.
	\end{align*}
	Since $\delta$ is small, our work above shows that $\mathcal{F}_{A',B'}$ is injective. Now apply the same argument as in Section \ref{subsec:proof_super_non-end} to conclude.
	

	\parskip 0em

\begin{thebibliography}{99}
		\bibitem{BRS} A. Bondarenko, D. Radchenko, and K. Seip, \emph{Fourier interpolation with zeros of zeta and $L$-functions}, Constr. Approx. \textbf{57} (2023), no. 2, 405--461.
		
		\bibitem{BRV} A. Bondarenko, D. Radchenko, and M. Viazovska, \emph{Optimal asymptotic bounds for spherical designs}, Ann. of Math. (2) \textbf{178} (2013), no. 2, 443--452.
		
		\bibitem{CKMRV} H. Cohn, A. Kumar, S. Miller, D. Radchenko, and M. Viazovska, \emph{Universal optimality of the $E_8$ and Leech lattices and interpolation formulas}, Ann. of Math. (2) \textbf{196} (2022), no. 3, 983--1082.
		
		\bibitem{DGS} P. Delsarte, J. Goethals, and J. Seidel, \emph{Spherical codes and designs}, Geometriae Dedicata \textbf{6} (1977), no. 3, 363--388.
		
		\bibitem{Folland} G. Folland, \emph{Harmonic analysis in phase space}, Annals of Mathematics Studies \textbf{122} (1989), Princeton University Press, Princeton, NJ.
		
		\bibitem{KNS} A. Kulikov, F. Nazarov, and M. Sodin, \emph{Fourier uniqueness and non-uniqueness pairs}, unpublished. Talk by Sodin in 2021: \href{https://www.youtube.com/watch?v=csAPvkDGpjY&t=2954s&ab_channel=FieldsInstitute}{https://www.youtube.com/watch?v=csAPvkDGpjY\&t=2954s\&ab\_channel=FieldsInstitute}
		
		\bibitem{Nyquist} H. Nyquist, \emph{Certain topics in telegraph transmission theory}, Trans. AIEE \textbf{47} (1928), no. 2, 617--644.
		
		\bibitem{Rad_Stol} D. Radchenko and M. Stoller, \emph{Fourier non-uniqueness sets from totally real number fields}, Comment. Math. Helv. \textbf{97} (2022), no. 3, 513--553.
		
		\bibitem{Rad_Viaz} D. Radchenko and M. Viazovska, \emph{Fourier interpolation on the real line}, Publ. Math. Inst. Hautes \'Etudes Sci. \textbf{129} (2019), 51--81.
		
		\bibitem{Ramos_Sousa} J. Ramos and M. Sousa, \emph{Fourier uniqueness pairs of powers of integers}, J. Eur. Math. Soc. (JEMS) \textbf{24} (2022), no. 12, 4327--4351.
		
		\bibitem{Ramos_Stol} J. Ramos and M. Stoller, \emph{Perturbed Fourier uniqueness and interpolation results in higher dimensions}, J. Func. Anal. \textbf{282} (2022), no. 12, paper no. 109448.
		
		\bibitem{Rodino} L. Rodino, \emph{Linear partial differential operators in Gevrey spaces} (1993), World Scientific Publishing Co., Inc., River Edge, NJ.
		
		\bibitem{Sardari} N. Sardari, \emph{Higher Fourier interpolation on the plane}, preprint (2021), \textsf{arXiv:2102.08753}.
		
		\bibitem{Shannon}  C. Shannon, \emph{Communication in the presence of noise}, Proc. IRE \textbf{37} (1949), 10--21.
		
		\bibitem{Stoller} M. Stoller, \emph{Fourier interpolation from spheres}, Trans. Amer. Math. Soc. \textbf{374} (2021), no. 11, 8045--8079.
		
		\bibitem{Tao_pseudo} T. Tao, \emph{Pseudodifferential operators}, course notes (2020):
		\\
		\href{https://terrytao.wordpress.com/2020/05/02/247b-notes-3-pseudodifferential-operators/}{https://terrytao.wordpress.com/2020/05/02/247b-notes-3-pseudodifferential-operators/}
		
		\bibitem{Tao_phase} T. Tao, \emph{The uncertainty principle}, blog post (2010):
		\\
		\href{https://terrytao.wordpress.com/2010/06/25/the-uncertainty-principle/}{https://terrytao.wordpress.com/2010/06/25/the-uncertainty-principle/}
		
		\bibitem{Viaz} M. Viazovska, \emph{On discrete Fourier uniqueness sets in Euclidean space}, to appear in Proc. Int. Cong. Math. 2022.
		
		\bibitem{Whittaker} E. Whittaker, \emph{On the functions which are represented by the expansions of the interpolation theory}, Proc. Royal Soc. Edinburgh \textbf{35} (1915), 181--194.
	\end{thebibliography}
\end{document}